\newcommand{\E}{\mathbb{E}}
\newcommand{\ignore}[1]{}
\newtheorem{theorem}{Theorem}
\newtheorem{lemma}{Lemma}
\newtheorem{definition}{Definition}
\numberwithin{equation}{section}
\def\@fnsymbol#1{\ensuremath{\ifcase#1\or \dagger\or \ddagger \or 
   \mathsection\or \mathparagraph\or \|\or **\or \dagger\dagger
   \or \ddagger\ddagger \else\@ctrerr\fi}}
\begin{document}
 
 
\title{Energy Stable and Structure-Preserving Algorithms for the Stochastic Galerkin System of 2D Shallow Water Equations}

\author[1]{Yekaterina Epshteyn \thanks{\href{epshteyn@math.utah.edu}{epshteyn@math.utah.edu}}}

\author[1,2]{Akil Narayan \thanks{\href{akil@sci.utah.edu}{akil@sci.utah.edu}}}

\author[1]{Yinqian Yu \thanks{ \href{yinqian.yu@utah.edu}{yinqian.yu@utah.edu}}}

\affil[1]{\small Department of Mathematics, University of Utah, 155 S 1400 E, Salt Lake City, 84112, UT, USA}
\affil[2]{\small Scientific Computing and Imaging Institute, University of Utah, 72 S Central Campus Dr, Salt Lake City, 84112, UT, USA}

\date{ }
\maketitle
\begin{abstract}
  Shallow water equations (SWE) are fundamental nonlinear hyperbolic PDE-based models in fluid dynamics that are essential for studying a wide range of geophysical and engineering phenomena. Therefore, stable and accurate numerical methods for SWE are needed. Although some algorithms are well studied for deterministic SWE, more effort should be devoted to handling the SWE with uncertainty. In this paper, we incorporate uncertainty through a stochastic Galerkin (SG) framework, and building on an existing hyperbolicity-preserving SG formulation for 2D SWE, we construct the corresponding entropy flux pair, and develop structure-preserving, well-balanced, second-order energy conservative and energy stable finite volume schemes for the SG formulation of the two-dimensional shallow water system. We demonstrate the efficacy, applicability, and robustness of these structure-preserving algorithms through several challenging numerical experiments.

\end{abstract}

{\bf Keywords:} stochastic shallow water equations, stochastic Galerkin, entropy flux pair, structure-preserving algorithms, energy conservative schemes, energy stable schemes


\section{Introduction}\label{intro}

The Saint-Venant system of shallow water equations is widely used in mathematical and physical modeling of geophysical fluid flows, where the horizontal length scale is much greater than the vertical length scale, such as in rivers, lakes, and coastal regions \cite{de1871theorie}. The shallow water equations can capture the essential dynamics of wave propagation, currents, and other hydrodynamic behaviors under the assumption of a shallow fluid layer, making them suitable for modeling a variety of practical problems involving water flows. Despite the wide applicability of shallow water systems, shallow water models with random component are even more relevant for practical real-world situations since precise knowledge of an environment or operating conditions is frequently absent. Such random component/uncertainty can improve the predictive capabilities of the models, motivating the development of advanced methods to incorporate and manage the uncertainty in the governing shallow water systems. In this context, we will consider the \textit{parameterized} stochastic shallow water equations (SWE), with the parameter as a random variable modeling the uncertainty.

In order to discretize the parameterized SWE, we consider the polynomial chaos expansion (PCE), which is a technique for modeling uncertainty in parameterized systems. Initially introduced by Wiener \cite{wiener1938homogeneous} for Gaussian processes using Hermite polynomials, the method has since been extended to more general orthogonal polynomial bases to handle random inputs with various distributions \cite{ghanem2003stochastic,le2004uncertainty,wan2005adaptive,xiu2002wiener}. The PCE approach effectively encodes uncertainty in parameterized partial differential equations (PDEs), resulting in a system of equations that can be solved using two major classes of methods: \textit{intrusive} methods and \textit{non-intrusive} methods. Based on sampling \cite{mishra2012multilevel,nobile2008sparse,xiu2005high}, non-intrusive methods construct the polynomial solution to the parameterized system by collecting an ensemble of solutions to the deterministic form at a collection of fixed values of the parameters corresponding to random variables. This approach can utilize existing and trusted legacy solvers for the deterministic SWE, for example, see, e.g. \cite{zhou2001surface,kurganov2002central,rogers2003mathematical,vcrnjaric2004balanced,xing2005high,xing2006high,xing2006new,kurganov2007second,BEKP,epshteyn2023adaptive,kurganov2018finite,liu2018well,xing2016high,XING2017361,zhong2022entropy}. However, the solutions generated by the non-intrusive approaches may be less accurate than those from intrusive methods. Moreover, ensuring associated structure-preserving properties, including the entropy conditions, can be challenging or impossible.

In the context of PCE methods, intrusive methods typically refer to the stochastic Galerkin (SG) approach, where the truncated PCE \cite{wiener1938homogeneous,xiu2002wiener} representations of the state variables are projected using a Galerkin method in the stochastic space. This process results in a new system of deterministic PDEs, with the variables being the truncated PCE coefficients. To solve the new system, a substantial rewrite of legacy codes and PDEs solvers is required, which is one of the drawbacks of intrusive methods. However, the SG method is generally expected to be more accurate than some alternative non-intrusive methods. Since the SG method is based on projection, it leads to near-optimal accuracy in the $L^2$ sense for static problems \cite{babuska2004galerkin,le2010spectral}. Additionally, the SG approach provides opportunities to rigorously prove the desired properties of the SG system and the subsequent numerical schemes. Therefore, SG methods have been effectively employed to model uncertainty in diffusion equations \cite{xiu2009efficient,eigel2014adaptive}, kinetic equations \cite{hu2016stochastic,shu2017stochastic}, and conservation and balance laws with symmetric Jacobian matrices \cite{tryoen2010intrusive}.

However, for a general hyperbolic system of conservation and balance laws, such as the SWE system, the associated SG system may not be hyperbolic \cite{despres2013robust,gerster2019hyperbolic,jin2019study}. Thus, the SG formulation may result in a system of partial differential equations that belongs to a different class from the original deterministic system, potentially yielding unphysical solutions and compromising the robustness of subsequent numerical schemes. Numerous recent efforts have explored numerical methods for the SG formulation of various types of hyperbolic conservation laws. Some advances include SG-type analysis and algorithms for scalar conservation laws \cite{zhou2012galerkin} including well-balanced methods \cite{jin2016well}, Haar wavelet-based approaches \cite{gerster2022haar}, hyperbolicity preservation through a non-equivalent Roe variables formulation \cite{gerster2020entropies}, filtering strategies \cite{kusch2020filtered}, limiter-type methods \cite{schlachter2018hyperbolicity}, hyperbolicity formulations for linear problems \cite{pulch2012generalised} or using linearization techniques \cite{wu2017stochastic}, operator splitting \cite{chertock2015operator,chertock2015well}, non-conservative formulations of SG SWE systems \cite{chen2023cross}, and entropic variable representations \cite{poette2019,poette2009uncertainty}.


In this paper, we build upon the recent work on a hyperbolicity-preserving SG formulation for two-dimensional SWE \cite{dai2022hyperbolicity}. Despite the hyperbolicity-preserving property, if the corresponding source term vanishes, the SG SWE is a nonlinear hyperbolic conservation and balance law and hence it inherits standard challenges in developing numerical methods for such a model. For example, solutions can develop shock discontinuities in finite time with generic initial conditions, potentially producing non-unique weak solutions. Therefore, an additional entropy condition should be imposed, either implicitly or explicitly, to identify the desired physical solution. Note that implementing implicit time-integration solvers is challenging because of the nonlinearity involved in the system \cite{dafermos2016hyperbolic,leveque1992numerical,leveque2002finite}. In addition, depending on its structure, the SWE system is supposed to satisfy the \textit{well-balanced} property \cite{bermudez1994upwind}, ensuring that numerical schemes accurately capture the steady-state solution of the PDE. Moreover, uncertainty in the stochastic model complicates all challenges when considering the SG SWE, making them more ambiguous and difficult to address.

\subsection{Contributions of this paper}
In this paper, we extend the ideas developed in \cite{dai2024energy} to stochastic shallow water equations in two-dimensional physical space. The main contributions of this paper are as follows:
\begin{itemize}\label{contributions}
\item We derive an entropy flux pair for the hyperbolicity-preserving, positivity-preserving stochastic Galerkin (SG) formulation of two-spatial-dimensional shallow water equations (SWE) from \cite{dai2022hyperbolicity}. Entropy-entropy flux pairs are the theoretical
starting point towards the entropy admissibility criteria to resolve non-uniqueness of weak solutions.
\item \ignore{We develop second-order energy conservative, and first- and second-order energy stable semi-discrete finite volume methods for the SG formulation of the 2D SWE, using the specially designed entropy flux pair.} Using the entropy-entropy flux pair, we devise second-order energy conservative, and first- and second-order
energy stable finite volume schemes for the 2D SG SWE, all of which are also well-balanced. The designed energy conservative and energy stable schemes are
stochastic extensions of the schemes developed in \cite{fjordholm2011well,fjordholm2012arbitrarily} and are also the two-dimensional extensions of schemes developed in \cite{dai2024energy} for the stochastic SWE in 1D physical space.
\item We present several challenging numerical experiments to evaluate the performance of our schemes, including accuracy, well-balanced property, energy decay, and numerical robustness.
\end{itemize}
An outline of this paper is as follows: In Section \ref{Sec2}, we present preliminaries of polynomial chaos expansion (PCE) and the SG formulation of two-dimensional SWE system from \cite{dai2022hyperbolicity}. In Section \ref{Sec3}, we introduce entropy flux pairs for the deterministic SWE discussed in \cite{fjordholm2011well} and construct an entropy flux pair for the SG formulation of the two-dimensional SWE system. In Section \ref{Sec4}, we develop energy conservative and energy stable schemes, accompanied by theoretical proofs and algorithmic details. In Section \ref{Sec5}, we present several challenging numerical examples to demonstrate the performance of our schemes and verify their theoretical properties. In Section \ref{Sec6} we provide a brief summary of the main results of this paper and some potential topics for future research. Finally, we include some technical details for several proofs of lemmas and theorems in \cref{sec:entropy-tuple-proof,app:ec-proofs}.

\section{Stochastic Galerkin formulation of the two-dimensional shallow water equations}\label{Sec2}
In this section, we review the stochastic Galerkin (SG) formulation of the two-dimensional (2D) shallow water equations (SWE), which possess a hyperbolicity-preserving property, as developed in \cite{dai2022hyperbolicity}. We begin with the deterministic form of the 2D SWE:
\begin{align}\label{SWE}
  U_t + F(U)_x + G(U)_y &= S(U), &  U&=(h,q^x,q^y)^\top,
\end{align}
where $F$ and $G$ denote the fluxes in the $x$- and $y$-directions, respectively, and $S$ represents the source term,
\begin{align}\label{deterministic_SWE_flux}
  F(U) &= \begin{pmatrix}
    q^x\\ \frac{(q^x)^2}{h}+\frac{gh^2}{2}\\ \frac{q^x q^y}{h}
  \end{pmatrix}, & 
  G(U) &= \begin{pmatrix}
    q^y\\ \frac{q^x q^y}{h} \\ \frac{(q^y)^2}{h}+\frac{gh^2}{2}
  \end{pmatrix}, &
  S(U) &= \begin{pmatrix}
    0\\-gh\frac{\partial B}{\partial x}\\ -gh\frac{\partial B}{\partial y}
\end{pmatrix},
\end{align}
where $U = U(x,y,t)$ is the vector of conservative variables, $h = h(x,y,t)$ is the water height, $q^x(x,y,t)$ and $ q^y(x,y,t)$ are the discharges in the $x$- and $y$-directions, respectively, and $B(x,y)$ is the time-independent bottom topography. 
For the \textit{stochastic} shallow water equations, we consider introduction of a random field $\xi$, which could result from uncertainty or ignorance of the inputs, for example, bottom topography and initial data. We begin with some preliminaries for the model formulation of the stochastic Galerkin approach to the stochastic shallow water equations.


\subsection{Polynomial chaos expansion}\label{Sec2_1}
In this subsection, we provide a brief review of polynomial chaos expansion (PCE). For further details, see \cite{debusschere2004numerical, sullivan2015introduction, xiu2010numerical}.
Let $\xi \in \mathbb{R}^d$ be a $d$-dimensional random variable with a Lebesgue density function $\rho$. Define the $L^2$-integrable function space associated with $\rho$ as follows:
\begin{equation}
    L_{\rho}^2 (\mathbb{R}^d) \coloneqq \left\{ f: \mathbb{R}^d \to \mathbb{R} \;\;\bigg|\;\; \left( \int_{\mathbb{R}^d} f^2(s)\rho(s) ds \right)^{1/2} < +\infty \right\}.
\end{equation}
Assuming that $\rho$ has finite polynomial moments of all orders, there exists a $d$-variate orthonormal polynomial basis $\{ \phi_k\}_{k=1}^{\infty}$ such that,
\begin{equation}
  \E [ \phi_k(\xi) \phi_l(\xi)] = \langle \phi_k, \phi_l \rangle_{\rho} \coloneqq \int_{\mathbb{R}^d} \phi_k(s)\phi_l(s)\rho(s)ds = \delta_{k,l}, \quad \forall k,l \in \mathbb{N}, \quad \phi_1(\xi) \equiv 1, 
\end{equation}
where $\delta_{k,l}$ is the Kronecker delta. Further, under mild conditions \cite{ernst2012convergence}, then these basis functions span $L^2_{\rho}$: For any $z \in L_{\rho}^2$, then 
\begin{align}
  z(x,y,t,\xi) &\overset{L_{\rho}^2}{=} \sum_{k=1}^{\infty} \widehat{z}_k(x,y,t)\phi_k(\xi), & 
  \widehat{z}_k &= \left\langle z, \phi_k \right\rangle_{\rho}
\end{align}
where $x$, $y$, and $t$ are deterministic spatial and temporal variables, and $\widehat{z}_k(x,y,t)$ are the deterministic Fourier-type coefficients corresponding to the orthonormal basis $\{ \phi_k\}_{k=1}^{\infty}$. Numerical computations require finite truncations of these expansions. Let $P = \text{span}\{ \phi_k, k=1,2,\dots,K\} $ be a $K$-dimensional polynomial subspace of $ L_{\rho}^2$. We then define the $K$-term PCE approximation of a random field $z$ on this subspace as follows:
\begin{equation}\label{PCE-truncated}
    \Pi_P[z](x,y,t,\xi) \coloneqq \sum_{k=1}^{K} \widehat{z}_k(x,y,t)\phi_k(\xi).
\end{equation}
The statistics of $\Pi_P[z]$ can be derived from its expansion coefficients. Specifically, the mean and variance of the random field $z$ can be expressed in terms of these coefficients as follows:
\begin{align}
  \mathbb{E}[\Pi_P[z](x,y,t,\xi)  ] &= \widehat{z}_1(x,y,t), & \text{Var}[\Pi_P[z](x,y,t,\xi) ] &= \sum_{k=2}^K \widehat{z}_k^2(x,y,t).
\end{align}
Our numerical schemes involve specific manipulations of truncated expansion coefficients. With $\widehat{z} = (\widehat{z}_1,\dots,\widehat{z}_k)^\top \in \mathbb{R}^K$ the size-$K$ vector of truncated PCE coefficients of $z$, define the linear operator $\mathcal{P}:\mathbb{R}^K \to \mathbb{R}^{K\times K}$ as follows:
\begin{align}
  \mathcal{P}(\widehat{z}) &\coloneqq \sum_{k=1}^K \widehat{z}_k \mathcal{M}_k, & 
  \mathcal{M}_k &\in \mathbb{R}^{K\times K}, & 
  (\mathcal{M}_k)_{l,m} &\coloneqq \langle \phi_k,\phi_l\phi_m  \rangle_{\rho}.
\end{align}
Due to the symmetry and commutativity properties of the operator $\mathcal{P}(\cdot)$, the following identities hold,
\begin{align}\label{commutative_prop}
  \mathcal{P}(\widehat{z}) &= (\mathcal{M}_1 \widehat{z},\mathcal{M}_2 \widehat{z},\dots, \mathcal{M}_K \widehat{z} ), & 
  \mathcal{P}(\widehat{a})\widehat{b} &= \mathcal{P}(\widehat{b})\widehat{a}, &
  \widehat{b}^\top \mathcal{P}(\widehat{a}) &= \widehat{a}^\top \mathcal{P}(\widehat{b}),
\end{align}
The last two properties are proved in \cite[Lemma 2.1]{dai2024energy}, using the definition and symmetry of $\mathcal{P}$.
A stochastic Galerkin (SG) formulation of a $\xi$-parameterized partial differential equation (PDE) assumes that the state variable lies in the polynomial space $P$ and forms a scheme corresponding to projecting the PDE residual onto $P$. Note that the hyperbolicity of the straightforward SG formulation for nonlinear hyperbolic PDEs, such as the shallow water equations, is not automatically guaranteed. Therefore, special designs are required for the SG formulation of these nonlinear hyperbolic PDEs to preserve such an important property.

\subsection{Hyperbolic-preserving stochastic Galerkin formulation for 2D shallow water equation}\label{Sec2_2}
In this subsection, we review existing results on the hyperbolicity-preserving stochastic Galerkin formulation of the two-spatial-dimensional shallow water equations (2D SG SWE) \cite{dai2022hyperbolicity}.  We follow a standard Galerkin procedure in the stochastic space. It begins with reducing the problem to an alternative finite-dimensional form by replacing the solutions $(h,q^x,q^y)$ by the ansatz,
\begin{equation}\label{K-term-PCE}
\begin{split}
  h \simeq h_P &\coloneqq \sum_{k \in [K]} \widehat{h}_k(x,y,t)\phi_k(\xi),\\
   q^x \simeq q^x_P &\coloneqq \sum_{k \in [K]} (\widehat{q^x})_k(x,y,t)\phi_k(\xi),\\
   q^y \simeq q^y_P &\coloneqq \sum_{k \in [K]} (\widehat{q^y})_k(x,y,t)\phi_k(\xi),
\end{split}
\end{equation}
respectively, and the bottom $B$ by $\Pi_P[B]$, where we use the notation $[K] \coloneqq \{1, 2, \ldots, K\}$. With a special choice of how the Galerkin projection is applied to the nonlinear, non-polynomial terms $(q^x)^2/h, (q^y)^2/h, q^xq^y/h$ introduced in \cite{dai2022hyperbolicity}, a SG system of balance laws was derived, whose state variables are the coefficients in \eqref{K-term-PCE}
\begin{equation}\label{SGSWE}
    \widehat{U}_t + \widehat{F}(\widehat{U})_x + \widehat{G}(\widehat{U})_y = \widehat{S}(\widehat{U},\widehat{B}).
\end{equation}
Here, $\widehat{U} = \big(  \widehat{h}^\top , (\widehat{q^x})^\top, (\widehat{q^y})^\top  \big)^\top \in \mathbb{R}^{3K}$, where $\widehat{h} , \widehat{q^x}, \widehat{q^y}$ are length-$K$ vectors whose entries are the coefficients in \eqref{K-term-PCE}. The flux terms are defined by
\begin{equation}\label{exact_flux_term}
    \widehat{F}(\widehat{U}) = \begin{pmatrix}
    \widehat{q^x} \\ \mathcal{P}(\widehat{q^x})\mathcal{P}^{-1}(\widehat{h})\widehat{q^x} + \frac{1}{2}g \mathcal{P}(\widehat{h})\widehat{h}\\
    \mathcal{P}(\widehat{q^x})\mathcal{P}^{-1}(\widehat{h})\widehat{q^y}
\end{pmatrix}, \quad \widehat{G}(\widehat{U}) = \begin{pmatrix}
    \widehat{q^y} \\ 
    \mathcal{P}(\widehat{q^y})\mathcal{P}^{-1}(\widehat{h})\widehat{q^x} \\
    \mathcal{P}(\widehat{q^y})\mathcal{P}^{-1}(\widehat{h})\widehat{q^y}+ \frac{1}{2}g \mathcal{P}(\widehat{h})\widehat{h}
\end{pmatrix}. 
\end{equation}
The source term is given by
\begin{equation}\label{exact_source_term}
    \widehat{S}(\widehat{U}) = \begin{pmatrix}
    0 \\ -g\mathcal{P}(\widehat{h})\widehat{B_x} \\ -g\mathcal{P}(\widehat{h})\widehat{B_y}
\end{pmatrix}.
\end{equation}
In the deterministic case, the fluxes \eqref{exact_flux_term} and the source term \eqref{exact_source_term} reduce to their deterministic forms as shown in \eqref{deterministic_SWE_flux}. A notable observation from the above is that the deterministic term $\frac{q^x q^y}{h}$ have two different stochastic representations in \eqref{exact_flux_term}, i.e., $\mathcal{P}(\widehat{q^x})\mathcal{P}^{-1}(\widehat{h})\widehat{q^y} \neq \mathcal{P}(\widehat{q^y})\mathcal{P}^{-1}(\widehat{h})\widehat{q^x}$. This different treatment of these terms is essential to retaining hyperbolicity; a more detailed discussion of this fact and a brief empirical investigation of the difference between these two stochastic representations is provided in \cite{dai2022hyperbolicity}. 

Recall the deterministic SWE is hyperbolic under the condition that the water height $h>0$. There is a natural extension of this property to the SG formulation of the SWE.
\begin{theorem}[Theorem 3.1 in \cite{dai2022hyperbolicity}]
    If the matrix $\mathcal{P}(\widehat{h})$ is strictly positive definite at every point $(x,y,t)$ in the computational spatial-temporal domain, then the SG formulation \eqref{SGSWE} is hyperbolic.
\end{theorem}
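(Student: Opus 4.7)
The plan is to verify the classical criterion for hyperbolicity: for every unit vector $\mathbf{n} = (n_x,n_y)$, the directional flux Jacobian $J(\mathbf{n}) := n_x \widehat{F}'(\widehat{U}) + n_y \widehat{G}'(\widehat{U}) \in \mathbb{R}^{3K\times 3K}$ should have a full set of real eigenvalues and a complete set of linearly independent eigenvectors. Because the only hypothesis is that $\mathcal{P}(\widehat{h})$ is SPD, the natural strategy is to find an invertible similarity $T = T(\widehat{U})$, built from the unique SPD square root $\mathcal{P}^{1/2}(\widehat{h})$, such that $T^{-1} J(\mathbf{n}) T$ is symmetric; the spectral theorem then finishes the argument.

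First I would compute $\widehat{F}'$ and $\widehat{G}'$ explicitly as $3\times 3$ block matrices with $K\times K$ blocks. The required tools are the identities in \eqref{commutative_prop} together with the derivative rule
\begin{equation*}
  \frac{\partial}{\partial \widehat{h}}\bigl[\mathcal{P}^{-1}(\widehat{h})\,\widehat{v}\bigr] = -\mathcal{P}^{-1}(\widehat{h})\,\mathcal{P}\bigl(\mathcal{P}^{-1}(\widehat{h})\widehat{v}\bigr),
\end{equation*}
which follows from differentiating $\mathcal{P}(\widehat{h})\mathcal{P}^{-1}(\widehat{h}) = I$ and using linearity of $\mathcal{P}$. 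Introducing the \emph{stochastic velocity} matrices $\widehat{U}_x := \mathcal{P}(\widehat{q^x})\mathcal{P}^{-1}(\widehat{h})$ and $\widehat{U}_y := \mathcal{P}(\widehat{q^y})\mathcal{P}^{-1}(\widehat{h})$, each block of $\widehat{F}'$ and $\widehat{G}'$ becomes a short polynomial in $\widehat{U}_x$, $\widehat{U}_y$, and $\mathcal{P}(\widehat{h})$.

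The key algebraic observation is that both $\widehat{U}_x$ and $\widehat{U}_y$ are similar to symmetric matrices via $\mathcal{P}^{1/2}(\widehat{h})$: for example, $\mathcal{P}^{-1/2}(\widehat{h})\,\widehat{U}_x\,\mathcal{P}^{1/2}(\widehat{h}) = \mathcal{P}^{-1/2}(\widehat{h})\,\mathcal{P}(\widehat{q^x})\,\mathcal{P}^{-1/2}(\widehat{h})$, which is symmetric because both $\mathcal{P}(\widehat{q^x})$ and $\mathcal{P}^{-1/2}(\widehat{h})$ are symmetric (each $\mathcal{M}_k$ is). Mimicking the deterministic symmetrizer $\text{diag}(\sqrt{g},\sqrt{h},\sqrt{h})$ of 2D SWE in the primitive variables $(h,u,v)$, I would change to the stochastic primitives $(\widehat{h},\widehat{\mathbf{u}},\widehat{\mathbf{v}})$ with $\widehat{\mathbf{u}} := \mathcal{P}^{-1}(\widehat{h})\widehat{q^x}$ and $\widehat{\mathbf{v}} := \mathcal{P}^{-1}(\widehat{h})\widehat{q^y}$, and take $T := \text{diag}\bigl(\sqrt{g}\,I,\, \mathcal{P}^{1/2}(\widehat{h}),\, \mathcal{P}^{1/2}(\widehat{h})\bigr)$. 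I would then verify block by block that $T^{-1} J(\mathbf{n}) T$ is symmetric.

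The main obstacle I anticipate is the asymmetric treatment of the cross term $q^xq^y/h$, which appears as $\mathcal{P}(\widehat{q^x})\mathcal{P}^{-1}(\widehat{h})\widehat{q^y}$ in $\widehat{F}$ but as $\mathcal{P}(\widehat{q^y})\mathcal{P}^{-1}(\widehat{h})\widehat{q^x}$ in $\widehat{G}$ — two expressions that agree deterministically but differ as matrices in general. The off-diagonal blocks of $J(\mathbf{n})$ coupling the $\widehat{q^x}$ and $\widehat{q^y}$ equations must therefore be handled carefully to confirm that the block-diagonal $T$ above (rather than some more elaborate, $\mathbf{n}$-dependent symmetrizer) really does suffice. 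Once these cross-blocks are shown to match after conjugation, symmetry of the remaining diagonal blocks follows from the $\mathcal{P}^{1/2}(\widehat{h})$-similarity of $\widehat{U}_x$ and $\widehat{U}_y$ to symmetric matrices, and the spectral theorem delivers the claimed hyperbolicity.
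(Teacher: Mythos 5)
Your proposal is correct and is essentially the same route as the (cited) proof the paper relies on: one exhibits a single, direction-independent similarity transform---change to the primitive-type variables $(\widehat{h},\widehat{u},\widehat{v})$ and then scale by a block-diagonal matrix built from $\sqrt{g}\,I$ and $\mathcal{P}^{1/2}(\widehat{h})$---that simultaneously symmetrizes both flux Jacobians in \eqref{SG_Jacobians}, hence every directional combination $n_x\frac{\partial\widehat{F}}{\partial\widehat{U}}+n_y\frac{\partial\widehat{G}}{\partial\widehat{U}}$, and the spectral theorem then gives hyperbolicity. Two remarks for when you execute it: the conjugation direction must be reversed (already in the deterministic case $T^{-1}AT$ with $T=\mathrm{diag}(\sqrt{g},\sqrt{h},\sqrt{h})$ is not symmetric, while $TAT^{-1}$ is), and the cross-term obstacle you flag dissolves automatically---in the primitive variables the $\widehat{q^y}$-block row of the $x$-direction Jacobian (and the $\widehat{q^x}$-block row of the $y$-direction Jacobian) decouples, precisely because of the asymmetric Galerkin treatment of $q^xq^y/h$, so the block-diagonal, $\mathbf{n}$-independent symmetrizer indeed suffices.
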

This result is proven by identifying a symmetrizing similarity transform of the SG SWE flux Jacobians $\frac{\partial \widehat{F}}{\partial \widehat{U}}$ and $\frac{\partial \widehat{G}}{\partial \widehat{U}}$. These flux Jacobians will be useful for us later, so we explicitly provide them below: When $\mathcal{P}(\widehat{h})$ is invertible, the well-defined terms
\begin{align}\label{velocity_coeff}
  \widehat{u} &= \mathcal{P}^{-1}(\widehat{h})\widehat{q^x}, & 
  \widehat{v} &= \mathcal{P}^{-1}(\widehat{h})\widehat{q^y},
\end{align}
are stochastic representations of the $x$- and $y$-directional water velocity variables. 
I.e., these terms can be interpreted as the vectors of the PCE coefficients of the $x$-velocity $u \coloneqq q^x/h$ and the $y$-velocity $v \coloneqq q^y/h$. The flux Jacobians of the SG SWE system \eqref{SGSWE} can then expressed in terms of $K \times K$ blocks as follows:
\begin{equation}\label{SG_Jacobians}
    \begin{split}
        \frac{\partial \widehat{F}}{\partial \widehat{U}} = &\begin{pmatrix}
            0& I & 0\\
            g\mathcal{P}(\widehat{h}) - \mathcal{P}(\widehat{q^x})\mathcal{P}^{-1}(\widehat{h})\mathcal{P}(\widehat{u}) & \mathcal{P}(\widehat{q^x})\mathcal{P}^{-1}(\widehat{h}) + \mathcal{P}(\widehat{u})  & 0 \\
            -\mathcal{P}(\widehat{q^x})\mathcal{P}^{-1}(\widehat{h})\mathcal{P}(\widehat{v}) & \mathcal{P}(\widehat{v}) & \mathcal{P}(\widehat{q^x})\mathcal{P}^{-1}(\widehat{h})
        \end{pmatrix},\\
        \frac{\partial \widehat{G}}{\partial \widehat{U}} = & \begin{pmatrix}
            0 & 0 & I \\
             -\mathcal{P}(\widehat{q^y})\mathcal{P}^{-1}(\widehat{h})\mathcal{P}(\widehat{u}) &  \mathcal{P}(\widehat{q^y})\mathcal{P}^{-1}(\widehat{h}) & \mathcal{P}(\widehat{u}) \\
             g\mathcal{P}(\widehat{h}) - \mathcal{P}(\widehat{q^y})\mathcal{P}^{-1}(\widehat{h})\mathcal{P}(\widehat{v}) & 0 &  \mathcal{P}(\widehat{q^y})\mathcal{P}^{-1}(\widehat{h}) + \mathcal{P}(\widehat{v})
        \end{pmatrix}.
    \end{split}
\end{equation}

\section{An entropy flux pair for the 2D SG SWE }\label{Sec3}
To construct numerical schemes with desired energy stability properties, it is essential to derive entropy flux pairs for the SG formulation \eqref{SGSWE} of the SWE. We first review entropy flux pairs for the \textit{deterministic} SWE discussed in \cite{fjordholm2011well}.

\subsection{Entropy flux pairs for the deterministic SWE}\label{Sec3_1}
Solutions of systems of conservation and balance laws can develop shock discontinuities in finite time, even from smooth initial data, potentially leading to non-unique weak solutions defined in the distributional sense. To identify the desired physically relevant solution, an additional entropy admissibility criterion is imposed \cite{benzoni2007multi,dafermos2016hyperbolic}. For a general balance law in two spatial dimensions,
\begin{equation}\label{2D_balancelaw}
    U_t + F(U)_x + G(U)_y = S(U),
\end{equation}
an entropy flux tuple is a tuple $(E(U), H(U), K(U))$ satisfying the \textit{companion balance law},
\begin{equation}\label{2D_companion_balancelaw}
    E(U)_t + H(U)_x + K(U)_y = 0,
\end{equation}
where the entropy $E(U)$ is a scalar function that is convex in $U$, and $H$ and $K$ represent the corresponding entropy flux functions. For the consistency with the original balance law, the entropy flux pair $(E,H,K)$ is supposed to satisfy the \textit{compatibility condition} as follows:
\begin{equation}\label{compatibility_condition1}
    \frac{\partial E}{\partial U}(F_x + G_y - S) = H_x + K_y.
\end{equation}
This condition ensures that multiplying the equation \eqref{2D_balancelaw} by $\frac{\partial E}{\partial U}$ recovers the equation \eqref{2D_companion_balancelaw} for smooth solutions. When the source term vanishes and $(E, H, K) = (E(U), H(U), K(U))$, the compatibility condition \eqref{compatibility_condition1} simplifies to the usual entropy condition for conservation laws. Even though an entropy flux pair may not exist for a general system of balance laws, the companion balance law \eqref{2D_companion_balancelaw} for a hyperbolic system of balance laws, derived from continuum physics, is usually related to the Second Law of thermodynamics, with the total energy of the system often serving as the entropy function. Several related examples can be found in Section 3.3 of \cite{dafermos2016hyperbolic}. For the deterministic SWE in \eqref{SWE}, the total energy \cite{fjordholm2011well} is given by,
\begin{align}\label{Deterministic_energy}
    E^d(U) &= \frac{1}{2}(q^x u + q^y v) + \frac{1}{2}gh^2 + ghB,&   u & \coloneqq q^x/h,&   v &\coloneqq q^y/h,
\end{align}
where $\frac{1}{2}(q^x u + q^y v)$ is the kinetic energy, and $\frac{1}{2}gh^2 + ghB$ is the potential energy. A direct computation of the Hessian confirms that $E^d(U)$ is convex in $U = (h, q^x, q^y)$. If we choose the fluxes $H^d$ and $K^d$ as,
\begin{align}\label{Deterministic_fluxes}
    H^d(U) &\coloneqq \frac{1}{2}(hu^3 + huv^2) + gq^x(h+B),& \quad K^d(U) &\coloneqq \frac{1}{2}(hu^2v + hv^3) + gq^y(h+B),
\end{align}
then one can verify directly that $E^d_t + H^d_x + K^d_y = 0$, i.e., $(E^d, H^d, K^d)$ satisfies the companion balance law \eqref{2D_companion_balancelaw}. Hence, $(E^d, H^d, K^d)$ is an entropy flux tuple for \eqref{SWE}.
In the case of weak solutions with shocks, the entropy admissibility criterion requires that energy dissipates according to a vanishing viscosity principle as follows:
\begin{equation}
     E^d(U)_t + H^d(U)_x + K^d(U)_y \le  0.
\end{equation}
By extending the results of deterministic SWE, we construct an entropy flux pair for the SG formulation of the 2D SWE system \eqref{SGSWE} in the next section. This involves identifying a tuple, consisting of an entropy function that is convex in the state variable, that satisfies the companion balance law.

\subsection{An entropy flux pair for the 2D SG SWE}\label{Sec3_2}
In this subsection, we focus on constructing an entropy flux pair for the SG system \eqref{SGSWE}. We recall the discussion surrounding \eqref{SGSWE} for the definition of $\widehat{U}$. The main result of this section is stated in the following theorem.
\begin{theorem}\label{Thm_entropy_flux_pair}
    Define the entropy function
    \begin{equation}\label{entropy}
        E(\widehat{U}) = \frac{1}{2}\Big((\widehat{q^x})^\top \widehat{u} + (\widehat{q^y})^\top \widehat{v}\Big) + \frac{1}{2}g\|\widehat{h}\|^2 + g\widehat{h}^\top \widehat{B},
    \end{equation}
    and the flux functions
    \begin{equation}\label{flux_functions}
        \begin{split}
            H(\widehat{U}) = & \frac{1}{2} \Big(\widehat{u}^\top \mathcal{P}(\widehat{q^x})\widehat{u} + \widehat{v}^\top \mathcal{P}(\widehat{q^x})\widehat{v} \Big) + g (\widehat{q^x})^\top(\widehat{h}+\widehat{B}),\\
            K(\widehat{U}) = & \frac{1}{2}\Big(\widehat{v}^\top \mathcal{P}(\widehat{q^y})\widehat{v} + \widehat{u}^\top \mathcal{P}(\widehat{q^y})\widehat{u} \Big) + g(\widehat{q^y})^\top(\widehat{h}+\widehat{B}),
        \end{split}
    \end{equation}
    If $\mathcal{P}(\widehat{h})>0$, i.e., strictly positive definite, then $(E,H,K)$ is an entropy flux pair for the SG system of the two-dimensional SWE \eqref{SGSWE}.
\end{theorem}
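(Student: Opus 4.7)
The plan is to verify the two defining requirements of an entropy flux pair: (i) $E$ is convex in $\widehat{U}$ when $\mathcal{P}(\widehat{h})>0$, and (ii) the compatibility identity \eqref{compatibility_condition1} holds. Condition (ii), multiplied by the entropy variable, will then produce the companion balance law on smooth solutions of \eqref{SGSWE}.

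The first concrete step is to compute the entropy variable $\widehat{V}=\partial E/\partial\widehat{U}$. Rewriting the kinetic part of \eqref{entropy} as $\tfrac12(\widehat{q^x})^\top\mathcal{P}^{-1}(\widehat{h})\widehat{q^x}+\tfrac12(\widehat{q^y})^\top\mathcal{P}^{-1}(\widehat{h})\widehat{q^y}$, differentiation in $\widehat{q^x}$ and $\widehat{q^y}$ immediately yields $\widehat{u}$ and $\widehat{v}$ by \eqref{velocity_coeff}. For the $\widehat{h}$-derivative I would use the standard matrix-calculus formula $\partial\mathcal{P}^{-1}/\partial\widehat{h}_k=-\mathcal{P}^{-1}\mathcal{M}_k\mathcal{P}^{-1}$, combined with the fact that $(\mathcal{M}_k)_{l,m}=\langle\phi_k,\phi_l\phi_m\rangle_\rho$ is fully symmetric in $(k,l,m)$. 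That symmetry supplies the pivotal identity $\widehat{u}^\top\mathcal{M}_k\widehat{u}=(\mathcal{P}(\widehat{u})\widehat{u})_k$, from which $\partial E/\partial\widehat{h}=-\tfrac12\mathcal{P}(\widehat{u})\widehat{u}-\tfrac12\mathcal{P}(\widehat{v})\widehat{v}+g\widehat{h}+g\widehat{B}$.

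For convexity, I would change variables to $(\widehat{h},\widehat{u},\widehat{v})$, which is a diffeomorphism of the state space wherever $\mathcal{P}(\widehat{h})$ is SPD. In these variables $E=\tfrac12\widehat{u}^\top\mathcal{P}(\widehat{h})\widehat{u}+\tfrac12\widehat{v}^\top\mathcal{P}(\widehat{h})\widehat{v}+\tfrac12g\|\widehat{h}\|^2+g\widehat{h}^\top\widehat{B}$, and positive-definiteness of the Hessian in the original coordinates can then be assembled block-by-block, reducing via an elementary Schur-complement argument to the condition $\mathcal{P}(\widehat{h})>0$ together with $g>0$.

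The substance of the proof lies in the compatibility identity $\widehat{V}^\top(\widehat{F}_x+\widehat{G}_y-\widehat{S})=H_x+K_y$. By the structural symmetry of \eqref{exact_flux_term}–\eqref{exact_source_term} under swapping $x\leftrightarrow y$ and $\widehat{q^x}\leftrightarrow\widehat{q^y}$, it suffices to isolate the $x$-contribution and show it equals $H_x$, with the $y$-contribution following identically and producing $K_y$. Using $\widehat{q^x}=\mathcal{P}(\widehat{h})\widehat{u}$, $\widehat{q^y}=\mathcal{P}(\widehat{h})\widehat{v}$, I would apply the product rule to each block of $\widehat{F}_x$, then invoke the commutativity identities \eqref{commutative_prop} to move $\mathcal{P}$-factors past one another until the cubic-in-velocity terms reassemble as total $x$-derivatives of the quadratic forms $\widehat{u}^\top\mathcal{P}(\widehat{q^x})\widehat{u}$ and $\widehat{v}^\top\mathcal{P}(\widehat{q^x})\widehat{v}$ appearing in $H$. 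The gravity terms $\tfrac{g}{2}\mathcal{P}(\widehat{h})\widehat{h}$ from the flux, combined with $g\widehat{h}$ and $g\widehat{B}$ from $\widehat{V}$, collapse to $g(\widehat{q^x})^\top(\widehat{h}+\widehat{B})_x$ after using the symmetry of $\mathcal{M}_k$ and $(\mathcal{P}(\widehat h)\widehat h)_x$ identities; the $\widehat{B}_x$-piece cancels exactly with the $\widehat{V}^\top\widehat{S}$ contribution from the $x$-component of the source \eqref{exact_source_term}, leaving $H_x$.

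The main obstacle I expect is the bookkeeping in the cubic-in-velocity terms: each application of the ordinary chain rule in the deterministic derivation of \cite{fjordholm2011well} must be lifted to the operator setting with the correct placement of $\mathcal{P}(\widehat{h})$, $\mathcal{P}(\widehat{u})$, and $\mathcal{P}(\widehat{q^x})$. The twin tools of commutativity \eqref{commutative_prop} and the full $(k,l,m)$-symmetry of $\mathcal{M}_k$ replace scalar commutativity in the deterministic proof and are precisely what allows the cubic terms to telescope into exact spatial derivatives.
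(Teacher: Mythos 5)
Your proposal is correct and follows essentially the same route as the paper: compute $\partial\widehat{u}/\partial\widehat{U}$, $\partial\widehat{v}/\partial\widehat{U}$ via $\partial\mathcal{P}^{-1}/\partial\widehat{h}_k=-\mathcal{P}^{-1}\mathcal{M}_k\mathcal{P}^{-1}$ and the symmetry of $\mathcal{M}_k$, establish convexity of $E$ through positive definiteness of its Hessian in $\widehat{U}$ (your Schur-complement reduction to $gI>0$ with $\mathcal{P}(\widehat{h})>0$ is the same completion-of-squares the paper writes as a sum of nonnegative quadratic forms), and verify the compatibility condition with the $B$/source terms pairing off and the remaining kinetic/gravity terms telescoping via \eqref{commutative_prop}. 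Your direct bookkeeping of $\widehat{V}^\top\widehat{F}_x$ is just the chain-rule-equivalent of the paper's Jacobian identities $\frac{\partial E_1}{\partial \widehat{U}}\frac{\partial \widehat{F}}{\partial \widehat{U}}=\frac{\partial H_1}{\partial \widehat{U}}$ and $\frac{\partial E_1}{\partial \widehat{U}}\frac{\partial \widehat{G}}{\partial \widehat{U}}=\frac{\partial K_1}{\partial \widehat{U}}$, so the two arguments coincide in substance.
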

Note that the stochastic variants of entropy function \eqref{entropy} and flux functions \eqref{flux_functions} reduce to the deterministic energy \eqref{Deterministic_energy} and fluxes \eqref{Deterministic_fluxes}, respectively. The proof of \cref{Thm_entropy_flux_pair} is similar to that of \cite[Theorem 3.1]{dai2024energy}, which proves a corresponding result for an SG formulation of the SWE in a single spatial dimension. Hence, we relegate most details to \cref{sec:entropy-tuple-proof}, and provide only an outline of the major results needed. 
We start from a lemma that computes the gradient of $\widehat{u}$ and $\widehat{v}$. This technical result is also useful later when we construct energy conservative and stable schemes.
\begin{lemma}[Gradients of $\widehat{u}, \widehat{v}$]\label{lemma_gradient}
    Let $\widehat{q^x}, \widehat{q^y}$ be arbitrary, and let $\widehat{h} \in \mathbb{R}^K$ be such that $P(\widehat{h})$ is invertible. Define $\widehat{u}, \widehat{v}$ by \eqref{velocity_coeff}, then the gradients of the velocities are
\begin{align}\label{gradient_velocity}
        \frac{\partial \widehat{u}}{\partial \widehat{U}} &= [-\mathcal{P}^{-1}(\widehat{h})\mathcal{P}(\widehat{u}),\mathcal{P}^{-1}(\widehat{h}), 0],&  \frac{\partial \widehat{v}}{\partial \widehat{U}}& = [-\mathcal{P}^{-1}(\widehat{h})\mathcal{P}(\widehat{v}),0,\mathcal{P}^{-1}(\widehat{h})].
    \end{align}
\end{lemma}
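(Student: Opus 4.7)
The plan is to differentiate the defining identities $\mathcal{P}(\widehat{h})\widehat{u} = \widehat{q^x}$ and $\mathcal{P}(\widehat{h})\widehat{v} = \widehat{q^y}$ implicitly with respect to the three $K$-blocks of $\widehat{U}$. The structure mirrors the one-dimensional argument in \cite{dai2024energy}, except that now there are three blocks (the $\widehat{h}$, $\widehat{q^x}$, and $\widehat{q^y}$ blocks) rather than two. The two algebraic tools I will lean on are the linearity of the map $\widehat{z}\mapsto \mathcal{P}(\widehat{z})$ and the commutativity identity $\mathcal{P}(\widehat{a})\widehat{b}=\mathcal{P}(\widehat{b})\widehat{a}$ from \eqref{commutative_prop}.

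First I would dispatch the trivial blocks. Since $\widehat{u}=\mathcal{P}^{-1}(\widehat{h})\widehat{q^x}$ does not involve $\widehat{q^y}$, the third block vanishes; likewise $\partial\widehat{v}/\partial\widehat{q^x}=0$. Differentiating $\mathcal{P}(\widehat{h})\widehat{u}=\widehat{q^x}$ with respect to $\widehat{q^x}$ (with $\widehat{h}$ held fixed) gives $\mathcal{P}(\widehat{h})\,\partial\widehat{u}/\partial\widehat{q^x}=I$, hence $\partial\widehat{u}/\partial\widehat{q^x}=\mathcal{P}^{-1}(\widehat{h})$, and the $\widehat{v}$ analog is identical.

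The only substantive block is $\partial\widehat{u}/\partial\widehat{h}$. Here I would take the differential of $\mathcal{P}(\widehat{h})\widehat{u}=\widehat{q^x}$ while holding $\widehat{q^x}$ fixed. Using linearity of $\mathcal{P}$ to write $d\mathcal{P}(\widehat{h})=\mathcal{P}(d\widehat{h})$, the product rule yields
\begin{equation*}
  \mathcal{P}(d\widehat{h})\widehat{u} + \mathcal{P}(\widehat{h})\, d\widehat{u} = 0.
\end{equation*}
The crucial step is to swap $\mathcal{P}(d\widehat{h})\widehat{u}=\mathcal{P}(\widehat{u})\,d\widehat{h}$ using the commutativity identity in \eqref{commutative_prop}, which converts the perturbation $d\widehat{h}$ into the vector on which the Jacobian acts. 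Then invertibility of $\mathcal{P}(\widehat{h})$ gives $d\widehat{u} = -\mathcal{P}^{-1}(\widehat{h})\mathcal{P}(\widehat{u})\, d\widehat{h}$, so $\partial\widehat{u}/\partial\widehat{h}=-\mathcal{P}^{-1}(\widehat{h})\mathcal{P}(\widehat{u})$. Assembling the three blocks yields the first formula in \eqref{gradient_velocity}, and repeating the argument with $(\widehat{v},\widehat{q^y})$ in place of $(\widehat{u},\widehat{q^x})$ gives the second.

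There is no real obstacle: the computation is routine implicit differentiation. What deserves emphasis is that the clean block form of \eqref{gradient_velocity} is made possible precisely by the two structural properties of $\mathcal{P}$ — linearity in the argument and the commutativity $\mathcal{P}(\widehat{a})\widehat{b}=\mathcal{P}(\widehat{b})\widehat{a}$ — without which the differential of $\mathcal{P}(\widehat{h})\widehat{u}$ could not be rearranged into a matrix acting on $d\widehat{h}$ from the left. I would also note in passing that the lemma implicitly requires $\mathcal{P}(\widehat{h})$ to be invertible on an open set so that $\widehat{u}$ and $\widehat{v}$ are smooth functions of $\widehat{U}$; this is exactly the standing hyperbolicity hypothesis used throughout the paper.
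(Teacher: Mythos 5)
Your proof is correct and takes essentially the same route as the paper's: the paper differentiates $\widehat{u}=\mathcal{P}^{-1}(\widehat{h})\widehat{q^x}$ componentwise, using $\partial_{\widehat{h}_l}\mathcal{P}^{-1}(\widehat{h})=-\mathcal{P}^{-1}(\widehat{h})\mathcal{M}_l\mathcal{P}^{-1}(\widehat{h})$ and then reassembling the columns $\mathcal{M}_l\widehat{u}$ into $\mathcal{P}(\widehat{u})$ via \eqref{commutative_prop}, which is just the explicit-inverse counterpart of your implicit differentiation of $\mathcal{P}(\widehat{h})\widehat{u}=\widehat{q^x}$ followed by the swap $\mathcal{P}(d\widehat{h})\widehat{u}=\mathcal{P}(\widehat{u})\,d\widehat{h}$. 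Both arguments rest on the same structural facts (linearity of $\mathcal{P}$ and the identities in \eqref{commutative_prop}) and handle the $\widehat{q^x}$, $\widehat{q^y}$ blocks identically, so the proofs are equivalent.
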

For the proof, see \cref{sec:entropy-tuple-proof}. This result is a crucial ingredient in the following two lemmas.
\begin{lemma}[Convexity of $E(\widehat{U})$]\label{lemma_convexity}
    If $\mathcal{P}(\widehat{h})$ is positive definite, then the function $E(\widehat{U})$ defined \eqref{entropy} is convex in $\widehat{U}$.
\end{lemma}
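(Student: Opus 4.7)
The plan is to show $E(\widehat{U})$ is strictly convex by computing its Hessian in block form (grouping variables as $\widehat{h},\,\widehat{q^x},\,\widehat{q^y}$) and proving this $3K\times 3K$ matrix is positive definite on the open set $\{\widehat{h} : \mathcal{P}(\widehat{h}) \succ 0\}$. First I would rewrite
\begin{equation*}
E(\widehat{U}) = \tfrac{1}{2}(\widehat{q^x})^\top \mathcal{P}(\widehat{h})^{-1}\widehat{q^x} + \tfrac{1}{2}(\widehat{q^y})^\top \mathcal{P}(\widehat{h})^{-1}\widehat{q^y} + \tfrac{g}{2}\|\widehat{h}\|^2 + g\widehat{h}^\top \widehat{B},
\end{equation*}
noting that the last two terms are affine-plus-convex-quadratic in $\widehat{h}$, contributing exactly $gI$ to the $\widehat{h}\widehat{h}$ block and nothing to any other block; every cross-coupling between $\widehat{h}$, $\widehat{q^x}$, $\widehat{q^y}$ therefore comes from the two matrix-fractional kinetic-energy terms.

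Next, using \cref{lemma_gradient} to differentiate $\widehat{u},\widehat{v}$, I would assemble the Hessian in the block form
\begin{equation*}
H = \begin{pmatrix} H_{hh} & -\mathcal{P}(\widehat{u})\mathcal{P}(\widehat{h})^{-1} & -\mathcal{P}(\widehat{v})\mathcal{P}(\widehat{h})^{-1} \\ -\mathcal{P}(\widehat{h})^{-1}\mathcal{P}(\widehat{u}) & \mathcal{P}(\widehat{h})^{-1} & 0 \\ -\mathcal{P}(\widehat{h})^{-1}\mathcal{P}(\widehat{v}) & 0 & \mathcal{P}(\widehat{h})^{-1} \end{pmatrix},
\end{equation*}
with $H_{hh} = gI + \Phi(\widehat{u}) + \Phi(\widehat{v})$ and $\Phi(\widehat{w})_{\ell,k} \coloneqq \widehat{w}^\top \mathcal{M}_k \mathcal{P}(\widehat{h})^{-1} \mathcal{M}_\ell\, \widehat{w}$, the latter appearing when one differentiates the first-order expression $-\tfrac{1}{2}\widehat{w}^\top \mathcal{M}_k \widehat{w}$ once more in $\widehat{h}_\ell$.

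Because the bottom-right $2K\times 2K$ block is $\operatorname{diag}\!\big(\mathcal{P}(\widehat{h})^{-1},\mathcal{P}(\widehat{h})^{-1}\big) \succ 0$ by hypothesis, I would finish via the Schur complement: $H \succ 0$ iff
\begin{equation*}
H_{hh} - \mathcal{P}(\widehat{u})\mathcal{P}(\widehat{h})^{-1}\mathcal{P}(\widehat{u}) - \mathcal{P}(\widehat{v})\mathcal{P}(\widehat{h})^{-1}\mathcal{P}(\widehat{v}) \;\succ\; 0.
\end{equation*}
The critical step, and the one I expect to carry most of the work, is the algebraic identity $\Phi(\widehat{w}) = \mathcal{P}(\widehat{w})\mathcal{P}(\widehat{h})^{-1}\mathcal{P}(\widehat{w})$, which reduces to showing $(\mathcal{P}(\widehat{w})\mathcal{P}(\widehat{h})^{-1}\mathcal{P}(\widehat{w}))_{\ell,k} = \widehat{w}^\top \mathcal{M}_k \mathcal{P}(\widehat{h})^{-1}\mathcal{M}_\ell\, \widehat{w}$. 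This follows from two applications of the commutativity relation $\mathcal{P}(\widehat{w})e_k = \mathcal{M}_k \widehat{w}$ in \eqref{commutative_prop}, together with symmetry of $\mathcal{P}(\widehat{h})^{-1}$ and of each $\mathcal{M}_k$. Granted this identity, the kinetic contributions in the Schur complement cancel exactly, leaving $gI \succ 0$. Hence $H \succ 0$ throughout $\{\widehat{h}:\mathcal{P}(\widehat{h})\succ 0\}$, and $E$ is (strictly) convex in $\widehat{U}$.
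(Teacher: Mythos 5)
Your proposal is correct and is essentially the paper's argument: both invoke \cref{lemma_gradient} to assemble the same $3K\times 3K$ Hessian (your identity $\Phi(\widehat{w})=\mathcal{P}(\widehat{w})\mathcal{P}^{-1}(\widehat{h})\mathcal{P}(\widehat{w})$ is exactly the $\widehat{h}\widehat{h}$-block $gI+\mathcal{P}(\widehat{u})\mathcal{P}^{-1}(\widehat{h})\mathcal{P}(\widehat{u})+\mathcal{P}(\widehat{v})\mathcal{P}^{-1}(\widehat{h})\mathcal{P}(\widehat{v})$ that the paper computes directly) and then deduce positive definiteness from $\mathcal{P}(\widehat{h})\succ 0$. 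The only difference is in packaging: the paper checks the quadratic form piecewise by completing the square, e.g.\ $\big(\mathcal{P}(\widehat{u})w_1-w_2\big)^\top\mathcal{P}^{-1}(\widehat{h})\big(\mathcal{P}(\widehat{u})w_1-w_2\big)\ge 0$, while you take the Schur complement with respect to the momentum block and observe that it collapses to $gI$, which is the equivalent block form of the same completion of squares.
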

\begin{lemma}[Companion balance law]\label{lemma_companion_bl}
If $\widehat{U}$ is smooth, the entropy flux pair $(E,H,K)$ defined in \eqref{entropy} and \eqref{flux_functions} satisfies the two-dimensional companion balance law:
\begin{equation}\label{stochastic_companion_bl}
    E(\widehat{U})_t + H(\widehat{U})_x + K(\widehat{U})_y = 0.
\end{equation}
\end{lemma}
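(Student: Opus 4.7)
The plan is to apply the chain rule to $E(\widehat{U})$ and substitute \eqref{SGSWE}, then match the resulting time derivative with $-H_x - K_y$. For smooth solutions one has $E_t = (\partial E/\partial \widehat{U})\,\widehat{U}_t = -(\partial E/\partial \widehat{U})\bigl((\partial \widehat{F}/\partial \widehat{U})\widehat{U}_x + (\partial \widehat{G}/\partial \widehat{U})\widehat{U}_y\bigr) + (\partial E/\partial \widehat{U})\,\widehat{S}$. Since $H$ and $K$ depend on $\widehat{B}$ only through the explicit $g(\widehat{q^x})^\top \widehat{B}$ and $g(\widehat{q^y})^\top \widehat{B}$ pieces in \eqref{flux_functions}, the chain rule splits $H_x, K_y$ into a $\widehat{U}$-derivative and a $\widehat{B}$-derivative contribution. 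Matching coefficients of $\widehat{U}_x,\widehat{U}_y,\widehat{B}_x,\widehat{B}_y$ separately reduces \eqref{stochastic_companion_bl} to (i) a pair of Jacobian compatibility identities $\partial_{\widehat{U}} H = (\partial_{\widehat{U}} E)(\partial_{\widehat{U}} \widehat{F})$ and $\partial_{\widehat{U}} K = (\partial_{\widehat{U}} E)(\partial_{\widehat{U}} \widehat{G})$, plus (ii) a source-compatibility identity cancelling the $\widehat{B}_x,\widehat{B}_y$ terms.

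The first concrete step is to compute $\partial E/\partial \widehat{U}$. Differentiating \eqref{entropy} and invoking \cref{lemma_gradient} gives
\begin{equation*}
  \frac{\partial E}{\partial \widehat{U}} = \Big[\, g(\widehat{h}+\widehat{B})^\top - \tfrac{1}{2}\widehat{u}^\top \mathcal{P}(\widehat{u}) - \tfrac{1}{2}\widehat{v}^\top \mathcal{P}(\widehat{v}),\ \widehat{u}^\top,\ \widehat{v}^\top \,\Big].
\end{equation*}
Using \eqref{commutative_prop} to rewrite $\widehat{u}^\top \mathcal{P}(\widehat{h}) = (\widehat{q^x})^\top$ and $\widehat{v}^\top \mathcal{P}(\widehat{h}) = (\widehat{q^y})^\top$, one then obtains $(\partial_{\widehat{U}} E)\,\widehat{S} = -g(\widehat{q^x})^\top \widehat{B}_x - g(\widehat{q^y})^\top \widehat{B}_y$, which cancels exactly the $\widehat{B}_x, \widehat{B}_y$ contributions coming from $H_x$ and $K_y$. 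This settles item (ii) cheaply.

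The remaining Jacobian identities in (i) I would verify block-by-block: compute the three $K$-vector blocks of $\partial H/\partial \widehat{U}$ by differentiating \eqref{flux_functions} (again using \cref{lemma_gradient} for gradients of $\widehat{u},\widehat{v}$), and independently multiply the row vector $\partial E/\partial \widehat{U}$ by each block column of $\partial \widehat{F}/\partial \widehat{U}$ in \eqref{SG_Jacobians}. Equality in each block follows from the symmetry of $\mathcal{P}(\widehat{a})$ and repeated use of the swap $\mathcal{P}(\widehat{a})\widehat{b} = \mathcal{P}(\widehat{b})\widehat{a}$. The $K,\widehat{G}$ case is handled by the same argument after the $x \leftrightarrow y$, $\widehat{u} \leftrightarrow \widehat{v}$ symmetry inherited from the deterministic SWE. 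The principal obstacle is purely bookkeeping: because $\mathcal{P}(\widehat{a})$ and $\mathcal{P}^{-1}(\widehat{h})$ do not commute in general, every rearrangement must be justified by \eqref{commutative_prop}, and the mixed cross terms like $\widehat{u}^\top \mathcal{P}(\widehat{q^x})\mathcal{P}^{-1}(\widehat{h})\mathcal{P}(\widehat{v})$ arising from the off-diagonal rows of $\partial \widehat{F}/\partial \widehat{U}$ must be rewritten symmetrically so that the two $\widehat{u}$–$\widehat{v}$ cross contributions combine. Since the computation is a step-for-step operator-level analog of the deterministic compatibility algebra of Section \ref{Sec3_1} (and of the 1D proof in \cite[Theorem 3.1]{dai2024energy}), I would follow the authors' convention and defer the entry-wise check to \cref{sec:entropy-tuple-proof}.
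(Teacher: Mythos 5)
Your proposal is correct and follows essentially the same route as the paper's proof: reduce the companion balance law via the chain rule to the compatibility condition, use \cref{lemma_gradient} to compute $\partial E/\partial\widehat{U}$, cancel the $\widehat{B}_x,\widehat{B}_y$/source contributions explicitly, and reduce the rest to the Jacobian identities $\partial_{\widehat{U}}H=(\partial_{\widehat{U}}E)(\partial_{\widehat{U}}\widehat{F})$, $\partial_{\widehat{U}}K=(\partial_{\widehat{U}}E)(\partial_{\widehat{U}}\widehat{G})$, verified by direct block computation. The paper merely organizes the same cancellation by splitting $E,H,K$ into bottom-dependent and bottom-independent parts ($E_1+E_2$, etc.), which is equivalent to your matching of the $\widehat{B}_x,\widehat{B}_y$ coefficients.
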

The proofs of \cref{lemma_convexity,lemma_companion_bl} are provided in \cref{sec:entropy-tuple-proof}. The proof of Theorem \ref{Thm_entropy_flux_pair} follows from \Cref{lemma_convexity,lemma_companion_bl}. I.e., we construct an entropy flux pair $(E,H,K)$, as defined in \eqref{entropy} and \eqref{flux_functions}, for the SG SWE \eqref{SGSWE}. This forms the foundation for developing energy conservative and energy stable semi-discrete finite volume schemes.

For our next goal of constructing energy conservative and energy stable schemes 
we define the following quantities,
\begin{equation}\label{entropy_variables}
    \begin{split}
        \widehat{V} \coloneqq & \Big(\frac{\partial E}{\partial \widehat{U}}\Big)^\top = \left( -\frac{1}{2}\widehat{u}^\top\mathcal{P}(\widehat{u}) -  \frac{1}{2}\widehat{v}^\top\mathcal{P}(\widehat{v}) + g(\widehat{h}+\widehat{B})^\top ,\widehat{u}^\top, \widehat{v}^\top  \right)^\top, \\
        \Psi \coloneqq & \widehat{V}^\top\widehat{F} - H = \frac{1}{2}g \widehat{u}^\top \mathcal{P}(\widehat{h})\widehat{h}, \\
        \Phi \coloneqq & \widehat{V}^\top\widehat{G} - K = \frac{1}{2}g \widehat{v}^\top \mathcal{P}(\widehat{h})\widehat{h},
    \end{split}
\end{equation}
where $\widehat{V}$ is called the entropy variable, and $\Psi$ and $ \Phi$ are called stochastic energy potentials.
    

\section{Well-balanced energy conservative and energy stable schemes for the 2D SG SWE}\label{Sec4}

In this section, we develop a well-balanced, second-order energy conservative (EC) scheme, as well as first-order and second-order energy stable (ES) schemes for the stochastic Galerkin (SG) formulation for the two-dimensional shallow water equations (SWE) \eqref{SGSWE}. These schemes are constructed using the entropy-flux pairs provided in \cref{Thm_entropy_flux_pair} and are designed to provide energy conservation and decay properties. They are stochastic extensions of the methods in \cite{fjordholm2011well} for deterministic balance laws, and are two-dimensional extensions of methods in \cite{dai2024energy} for the SWE SG system in one spatial dimension. We start by defining the well-balanced property of a numerical scheme, which ensures that stochastic ``lake-at-rest'' steady states are equilibrium states at the discrete level.

\begin{definition}[Well-Balanced SG SWE Property \cite{dai2022hyperbolicity}]\label{def:wb}
The solution $(h_P, q^x_P, q^y_P)$ of \eqref{SGSWE} is said to be well-balanced if it satisfies the stochastic ``lake-at-rest'' solution,
\begin{align}\label{well-balance}
    q^x_P(x,y,t,\xi) = q^y_P(x,y,t,\xi) &\equiv 0,& h_P(x,y,t,\xi) + \Pi_P[B](x,y,t,\xi)&\equiv C(\xi),
\end{align}
where $C(\xi)$ is a random scalar depending only on $\xi$.
  Such a well-balanced solution describes a still water surface with a flat but stochastic water surface. In terms of the system of PCE coefficients, \eqref{well-balance} is equivalent to the following vector equations
\begin{align}\label{well-balance-vector}
    \widehat{q^x} = \widehat{q^y}& \equiv {\bf 0}, & \widehat{h}+\widehat{B} &\equiv \widehat{C},& \forall (x,y,t) &\in \mathcal{D} \times [0,T],
\end{align}
with spatial domain $\mathcal{D}$ and terminal time $T$. Note that the vector equations \eqref{well-balance-vector} represent a steady state of the two-spatial-dimensional SG SWE \eqref{SGSWE}.
\end{definition}

\subsection{Energy conservative schemes}\label{Sec4_1}
The semi-discrete form for finite volume (FV) schemes for \eqref{SGSWE} on a uniform rectangular mesh reads,
\begin{equation}\label{FV-SGSWE}
    \frac{d}{dt}\bm{U}_{i,j} = - \frac{\mathcal{F}_{i+\frac{1}{2},j}-\mathcal{F}_{i-\frac{1}{2},j}}{\Delta x} - \frac{\mathcal{G}_{i,j+\frac{1}{2}}-\mathcal{G}_{i,j-\frac{1}{2}}}{\Delta y} + \bm{S}_{i,j}.
\end{equation}
The domain $\mathcal{D}$ is tessellated with rectangular cells $\mathcal{I}_{i,j} \coloneqq [x_{i-\frac{1}{2}},x_{i+\frac{1}{2}}] \times  [y_{j-\frac{1}{2}},y_{j+\frac{1}{2}}] $, with a uniform mesh size $\Delta x = x_{i+\frac{1}{2}}-x_{i-\frac{1}{2}}$ and $\Delta y = y_{j+\frac{1}{2}}-y_{j-\frac{1}{2}}$.  Hence, $\Delta x \Delta y$ is the size (area) of cell $\mathcal{I}_{i,j}$. We consider $M$ cells in each direction so that the rectangular domain so that $i, j \in [M]$. The quantity $\bm{U}_{i,j}(t)$ denotes the numerical approximation to the cell average of the vector $\widehat{U}$ over cell $\mathcal{I}_{i,j}$ at time $t$, i.e., $\bm{U}_{i,j}(t) \approx \frac{1}{\Delta x \Delta y} \int_{\mathcal{I}_{i,j}} \widehat{U}(x,y,t)dxdy$. The numerical fluxes $\mathcal{F}_{i \pm \frac{1}{2},j}$ and $\mathcal{G}_{i,j \pm \frac{1}{2}}$ depend on the neighboring states, e.g., $\bm{U}_{i,j}$ and $\bm{U}_{i+1,j}$ for $\mathcal{F}_{i+\frac{1}{2},j}$. Additionally, $\bm{S}_{i,j} \approx \frac{1}{\Delta x \Delta y}\int_{\mathcal{I}_{i,j}} \widehat{S}(\widehat{U},\widehat{B})$ represents the discretization of the source term, which should be designed to ensure a well-balanced numerical scheme in the sense of \cref{def:wb}. Other boldface notations, such as $(\bm{h}_{i,j},\bm{q^x}_{i,j},\bm{q^y}_{i,j},\bm{B}_{i,j})$, are defined in a similar manner. The discrete velocities $\bm{u}_{i,j}$ and $\bm{v}_{i,j}$ are defined as stochastic variants of \eqref{velocity_coeff}, as follows:
\begin{align}\label{velocity_discrete}
    \bm{u}_{i,j} &\coloneqq \mathcal{P}^{-1}(\bm{h}_{i,j})\bm{q^x}_{i,j}, &  \bm{v}_{i,j} &\coloneqq \mathcal{P}^{-1}(\bm{h}_{i,j})\bm{q^y}_{i,j}.
\end{align}
The discrete entropic quantities, stochastic variants of \eqref{entropy_variables}, are defined in terms of the discrete conservative variable $\bm{U}_{i,j}$ and the discrete velocities $\bm{u}_{i,j},\bm{v}_{i,j}$ as follows:
\begin{equation}\label{entropy_quantities_discrete}
    \begin{split}
        \bm{E}_{i,j} \coloneqq & \frac{1}{2}\Big( (\bm{q^x}_{i,j})^\top \bm{u}_{i,j}+ (\bm{q^y}_{i,j})^\top \bm{v}_{i,j} \Big) + \frac{1}{2}g\| \bm{h}_{i,j}\|^2 + g\bm h_{i,j}^\top \bm B_{i,j},\\
        \bm V_{i,j} \coloneqq & \left(\frac{\partial \bm E_{i,j}}{\partial \bm U_{i,j}}\right)^\top = \left( -\frac{1}{2}\bm u_{i,j}^\top \mathcal{P}(\bm u_{i,j}) -\frac{1}{2}\bm v_{i,j}^\top \mathcal{P}(\bm v_{i,j}) + g(\bm h_{i,j} + \bm B_{i,j})^\top, \quad \bm u_{i,j}^\top,\quad  \bm v_{i,j}^\top  \right)^\top,\\
        \bm \Psi_{i,j} \coloneqq & \frac{1}{2}g \bm u_{i,j}^\top \mathcal{P}(\bm h_{i,j})\bm h_{i,j}, \quad  \bm \Phi_{i,j} \coloneqq  \frac{1}{2}g \bm v_{i,j}^\top \mathcal{P}(\bm h_{i,j})\bm h_{i,j}.
    \end{split}
\end{equation}
Our numerical schemes involve the average and jump quantities at cell interfaces \cite{dai2024energy,fjordholm2011well}:
\begin{equation}
    \begin{aligned}\label{average_jump}
    \overline{\bm a}_{i+\frac{1}{2},j} &\coloneqq \frac{\bm a_{i,j} + \bm a_{i+1,j}}{2}, &\overline{\bm a}_{i,j+\frac{1}{2}} &\coloneqq \frac{\bm a_{i,j} + \bm a_{i,j+1}}{2}, \\
    \llbracket \bm a\rrbracket_{i+\frac{1}{2},j} &\coloneqq \bm a_{i+1,j} - \bm a_{i,j}, &\llbracket \bm a\rrbracket_{i,j+\frac{1}{2}} &\coloneqq \bm a_{i,j+1} - \bm a_{i,j},
\end{aligned}
\end{equation}
where $\bm a_{i,j}$ denotes any cell averaged quantity over $\mathcal{I}_{i,j}$, e.g., $\bm{U}_{i,j}$. The expressions \eqref{average_jump} are equivalent to
\begin{equation}\label{identity_ave_jump}
    \begin{split}
       \bm  a_{i,j} &= \overline{\bm a}_{i+\frac{1}{2},j} - \frac{\llbracket \bm a\rrbracket_{i+\frac{1}{2},j}}{2} = \overline{\bm a}_{i-\frac{1}{2},j} + \frac{\llbracket \bm a\rrbracket_{i-\frac{1}{2},j}}{2}  \\
       &= \overline{\bm a}_{i,j+\frac{1}{2}} - \frac{\llbracket \bm a\rrbracket_{i,j+\frac{1}{2}}}{2} = \overline{\bm a}_{i,j-\frac{1}{2}} + \frac{\llbracket \bm a\rrbracket_{i,j-\frac{1}{2}}}{2}.
    \end{split}
\end{equation}
Then we introduce additional equalities for the interfacial averages and jumps associated with the linear operator $\mathcal{P}$ in the following lemma. These are straightforward generalizations of \cite[Lemma 4.1]{dai2024energy}, so we omit the proof.
\begin{lemma}
    Let $\bm a_{i,j}, \bm b_{i,j}$ be any spatially discrete quantities, then
    \begin{subequations}
    \begin{align}\label{identity0}
          \mathcal{P}(\overline{\bm a}_{i+\frac{1}{2},j})\llbracket \bm{a}\rrbracket_{i+\frac{1}{2},j} &= \frac{1}{2}\llbracket \mathcal{P}(\bm a) \bm a\rrbracket_{i+\frac{1}{2},j}, & 
            \llbracket \bm a \rrbracket_{i+\frac{1}{2},j}^\top\overline{\bm b}_{i+\frac{1}{2},j} + \llbracket \bm b \rrbracket_{i+\frac{1}{2},j}^\top\overline{\bm a}_{i+\frac{1}{2},j} &= \llbracket \bm a^\top \bm b \rrbracket_{i+\frac{1}{2},j}, \\
    \mathcal{P}(\overline{\bm a}_{i,j+\frac{1}{2}})\llbracket \bm {a}\rrbracket_{i,j+\frac{1}{2}} &= \frac{1}{2}\llbracket \mathcal{P}(\bm a) \bm a\rrbracket_{i,j+\frac{1}{2}}, &
            \llbracket \bm a \rrbracket_{i,j+\frac{1}{2}}^\top\overline{\bm b}_{i,j+\frac{1}{2}} + \llbracket \bm b \rrbracket_{i,j+\frac{1}{2}}^\top\overline{\bm a}_{i,j+\frac{1}{2}} &= \llbracket \bm a^\top \bm b \rrbracket_{i,j+\frac{1}{2}}.
    \end{align}
    \end{subequations}
\end{lemma}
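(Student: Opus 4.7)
The plan is to prove each identity by unpacking the definitions of averages and jumps in terms of the two cell values on either side of an interface, then exploiting linearity of $\mathcal{P}$ and the commutativity identity $\mathcal{P}(\widehat{a})\widehat{b} = \mathcal{P}(\widehat{b})\widehat{a}$ from \eqref{commutative_prop}. Since the statements for the $y$-direction interfaces $(i,j+\tfrac{1}{2})$ are formally identical to those for the $x$-direction interfaces $(i+\tfrac{1}{2},j)$ with relabeled indices, I would prove only the $x$-direction versions and note that the $y$-direction follows by the same argument with $(i+1,j)$ replaced by $(i,j+1)$.

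For the first identity, I would abbreviate $\bm a_L \coloneqq \bm a_{i,j}$ and $\bm a_R \coloneqq \bm a_{i+1,j}$. Then by linearity of $\mathcal{P}$,
\begin{equation*}
  \mathcal{P}(\overline{\bm a}_{i+\frac12,j})\llbracket \bm a \rrbracket_{i+\frac12,j} = \tfrac{1}{2}\bigl(\mathcal{P}(\bm a_L)+\mathcal{P}(\bm a_R)\bigr)(\bm a_R-\bm a_L).
\end{equation*}
Expanding the product yields four terms; among them, the cross terms $\mathcal{P}(\bm a_L)\bm a_R$ and $\mathcal{P}(\bm a_R)\bm a_L$ cancel by the commutativity property $\mathcal{P}(\bm a_L)\bm a_R = \mathcal{P}(\bm a_R)\bm a_L$ in \eqref{commutative_prop}, leaving exactly $\tfrac{1}{2}\bigl(\mathcal{P}(\bm a_R)\bm a_R - \mathcal{P}(\bm a_L)\bm a_L\bigr)$, which is the right-hand side $\tfrac{1}{2}\llbracket \mathcal{P}(\bm a)\bm a\rrbracket_{i+\frac12,j}$.

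For the second identity, I would again write out the averages and jumps in terms of $\bm a_L,\bm a_R,\bm b_L,\bm b_R$, multiply through, and collect terms. The four cross terms (those involving one $L$ subscript and one $R$ subscript) appear in mirrored pairs of the form $\bm a_R^\top \bm b_L$ and $\bm b_L^\top \bm a_R$; these are equal scalars appearing with opposite signs, so they cancel. What remains is $\bm a_R^\top \bm b_R - \bm a_L^\top \bm b_L = \llbracket \bm a^\top \bm b\rrbracket_{i+\frac12,j}$.

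The only subtle point, which I would flag carefully, is the appeal to $\mathcal{P}(\bm a_L)\bm a_R = \mathcal{P}(\bm a_R)\bm a_L$ in the first identity; the second identity uses only linearity and the scalar identity $x^\top y = y^\top x$, so it requires no special structure of $\mathcal{P}$. In either case the calculation is short and mechanical, with no genuine obstacle beyond careful bookkeeping, which is consistent with the paper's remark that the lemma is a direct generalization of \cite[Lemma 4.1]{dai2024energy}.
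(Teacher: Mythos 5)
Your proof is correct and is precisely the intended argument: the paper omits the proof as a straightforward generalization of the cited one-dimensional lemma, and your expansion into left/right cell values together with linearity of $\mathcal{P}$ and the commutativity property $\mathcal{P}(\bm a_L)\bm a_R = \mathcal{P}(\bm a_R)\bm a_L$ from \eqref{commutative_prop} is exactly that computation. Your observations that the second identity needs only scalar symmetry and that the $y$-direction cases follow by relabeling are also accurate.
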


To provide the specific definitions for energy conservative and energy stable schemes for systems of balance laws in two spatial dimensions, we recall that the semi-discrete FV form \eqref{FV-SGSWE} is called a \textit{conservative scheme} when the source term vanishes i.e., $\bm S_{i,j} = 0$. In this case, and by summing \eqref{FV-SGSWE} over the cells, we obtain,
\begin{equation}\label{total_change_U}
    \frac{d}{dt} \sum_{i,j \in [M]} \bm U_{i,j}(t) = \sum_{j=1}^M \frac{\mathcal{F}_{\frac{1}{2},j} - \mathcal{F}_{M+\frac{1}{2},j}}{\Delta x} + \sum_{i=1}^M \frac{\mathcal{G}_{i,\frac{1}{2}} - \mathcal{G}_{i,M+\frac{1}{2}}}{\Delta y}.
\end{equation}
This implies that, if periodic boundary conditions are imposed so that $\mathcal{F}_{\frac{1}{2},j} = \mathcal{F}_{M+\frac{1}{2},j}$ and similarly for $\mathcal{G}$, then $\widehat{U}$ remains constant over time. Since the entropy-flux pair variables $(E,H,K)$ are explicit functions of the state variable $\widehat{U}$ and the inputs of the source term, i.e., $\widehat{B}$, the semi-discrete form \eqref{FV-SGSWE} of the balance law \eqref{SGSWE} can be transformed into a semi-discrete form of the corresponding companion balance law \eqref{stochastic_companion_bl} for the entropy (energy) of the system. The notions of energy conservative and energy stable schemes can be defined through the evolution of the entropy/energy of the system.

\begin{definition}[Energy conservative and energy stable schemes]\label{def_ECES}
Suppose the system of balance laws \eqref{SGSWE} has an entropy flux pair $(E,H,K)$, where $E(\widehat{U})$ represents the system's energy. Then the semi-discrete finite volume (FV) scheme \eqref{FV-SGSWE} is an \textbf{Energy Conservative (EC)} scheme if it can be rewritten in the following semi-discrete form for the evolution of the numerical cell averages $\bm E_{i,j}$ of $E$:
\begin{equation}\label{EC_def}
    \frac{d}{dt}\bm E_{i,j}(t) = -\frac{1}{\Delta x}(\mathcal{H}_{i+\frac{1}{2},j} -\mathcal{H}_{i-\frac{1}{2},j} ) - \frac{1}{\Delta y}(\mathcal{K}_{i,j+\frac{1}{2}} -\mathcal{K}_{i,j-\frac{1}{2}} ), 
\end{equation}
where $\mathcal{H}_{i+\frac{1}{2},j}$ represents the numerical entropy flux at the interface $(x,y) = (x_{i+\frac{1}{2}},y_j)$, and $\mathcal{K}_{i,j+\frac{1}{2}}$ represents another numerical entropy flux at the interface location $(x,y) = (x_i,y_{j+\frac{1}{2}})$.

Alternatively, the scheme \eqref{FV-SGSWE} is called an \textbf{Energy Stable (ES)} scheme under the weaker condition,
\begin{equation}\label{ES_def}
    \frac{d}{dt}\bm E_{i,j}(t) \le -\frac{1}{\Delta x}(\mathcal{H}_{i+\frac{1}{2},j} -\mathcal{H}_{i-\frac{1}{2},j} ) - \frac{1}{\Delta y}(\mathcal{K}_{i,j+\frac{1}{2}} -\mathcal{K}_{i,j-\frac{1}{2}} ).
\end{equation}
\end{definition}

By summing \eqref{EC_def} over all cells of the computational domain, a form similar to \eqref{total_change_U} can be obtained, but for energy instead of the state variable $\bm U$, showing that the cumulative energy remains constant over time under periodic boundary conditions. For non-periodic boundary conditions, energy can increase due to the boundary terms; we discuss a notion of \textit{augmented} energy in \cref{bdry_effect} that attempts to separate this potential intrinsic energy increase effect from any energy increase due to numerical discretizations.

\subsection{An energy conservative scheme for the 2D SG SWE (EC)}\label{Sec4_2}
For the scheme \eqref{FV-SGSWE}, we make the following choices for the fluxes and balance terms:
\begin{equation}\label{EC_flux}
    \begin{split}
      \mathcal{F}_{i+\frac{1}{2},j} = \mathcal{F}^{EC}_{i+\frac{1}{2},j} &= \begin{pmatrix}
            \mathcal{P}(\overline{\bm h}_{i+\frac{1}{2},j})\overline{\bm u}_{i+\frac{1}{2},j}\\
            \frac{1}{2}g(\overline{\mathcal{P}(\bm h)\bm h})_{i+\frac{1}{2},j} + \mathcal{P}(\overline{\bm u}_{i+\frac{1}{2},j})\mathcal{P}(\overline{\bm h}_{i+\frac{1}{2},j})\overline{\bm u}_{i+\frac{1}{2},j}\\
            \mathcal{P}(\overline{\bm v}_{i+\frac{1}{2},j})\mathcal{P}(\overline{\bm h}_{i+\frac{1}{2},j})\overline{\bm u}_{i+\frac{1}{2},j}
        \end{pmatrix},\\
        \mathcal{G}_{i,j+\frac{1}{2}}=  \mathcal{G}^{EC}_{i,j+\frac{1}{2}} &=\begin{pmatrix}
         \mathcal{P}(\overline{\bm h}_{i,j+\frac{1}{2}})\overline{\bm v}_{i,j+\frac{1}{2}}\\
         \mathcal{P}(\overline{\bm u}_{i,j+\frac{1}{2}})\mathcal{P}(\overline{\bm h}_{i,j+\frac{1}{2}})\overline{\bm v}_{i,j+\frac{1}{2}},\\
         \frac{1}{2}g(\overline{\mathcal{P}(\bm h)\bm h})_{i,j+\frac{1}{2}} + \mathcal{P}(\overline{\bm v }_{i,j+\frac{1}{2}})\mathcal{P}(\overline{\bm h}_{i,j+\frac{1}{2}})\overline{\bm v}_{i,j+\frac{1}{2}}
    \end{pmatrix},\\
   \bm{S}_{i,j} &= \begin{pmatrix}
       0 \\
       -\frac{g}{2\Delta x} (\mathcal{P}(\overline{\bm h}_{i+\frac{1}{2},j}) \llbracket \bm B\rrbracket_{i+\frac{1}{2},j} +  \mathcal{P}(\overline{\bm h}_{i-\frac{1}{2},j}) \llbracket \bm B\rrbracket_{i-\frac{1}{2},j})\\
       -\frac{g}{2\Delta y} (\mathcal{P}(\overline{\bm h}_{i,j+\frac{1}{2}}) \llbracket \bm B\rrbracket_{i,j+\frac{1}{2}} +  \mathcal{P}(\overline{\bm h}_{i,j-\frac{1}{2}}) \llbracket \bm B\rrbracket_{i,j-\frac{1}{2}})
   \end{pmatrix}.
    \end{split}
\end{equation}
These choices ensure that the scheme is energy conservative and well-balanced.
\begin{theorem}[Second-order EC well-balanced scheme]\label{theorem_EC}
    Suppose the bottom topography function $B$ is independent of time. The semi-discrete FV scheme \eqref{FV-SGSWE} for the two-spatial-dimensional SG SWE system \eqref{SGSWE} with fluxes and source term in \eqref{EC_flux} is a well-balanced EC scheme with local truncation error $\mathcal{O}(\Delta x^2 + \Delta y^2)$.
\end{theorem}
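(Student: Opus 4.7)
The theorem makes three claims to verify in sequence: the well-balanced property, the energy-conservation identity \eqref{EC_def}, and the $\mathcal{O}(\Delta x^2+\Delta y^2)$ local truncation error. I would handle each one separately, leveraging the algebraic machinery already set up in Sections \ref{Sec2_1}--\ref{Sec3_2}.

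\emph{Well-balancedness.} The plan is to substitute the stochastic lake-at-rest state \eqref{well-balance-vector} directly into \eqref{EC_flux}. Because $\bm{q^x}_{i,j}=\bm{q^y}_{i,j}=\mathbf{0}$ forces $\bm u_{i,j}=\bm v_{i,j}=\mathbf{0}$ and hence all averages $\overline{\bm u}_{i\pm 1/2,j}$ and $\overline{\bm v}_{i,j\pm 1/2}$ also vanish, every component of $\mathcal{F}^{EC}$ and $\mathcal{G}^{EC}$ except the $\tfrac{1}{2}g\,\overline{\mathcal{P}(\bm h)\bm h}$ momentum entries drops out. It then remains to rewrite the telescoping
\[
\overline{\mathcal{P}(\bm h)\bm h}_{i+\frac{1}{2},j}-\overline{\mathcal{P}(\bm h)\bm h}_{i-\frac{1}{2},j}=\tfrac{1}{2}\llbracket\mathcal{P}(\bm h)\bm h\rrbracket_{i+\frac{1}{2},j}+\tfrac{1}{2}\llbracket\mathcal{P}(\bm h)\bm h\rrbracket_{i-\frac{1}{2},j}
\]
using the first identity of \eqref{identity0} as $\mathcal{P}(\overline{\bm h}_{i+\frac{1}{2},j})\llbracket\bm h\rrbracket_{i+\frac{1}{2},j}+\mathcal{P}(\overline{\bm h}_{i-\frac{1}{2},j})\llbracket\bm h\rrbracket_{i-\frac{1}{2},j}$, and then to invoke $\llbracket\bm h\rrbracket=-\llbracket\bm B\rrbracket$ (from the lake-at-rest condition) so that the momentum flux difference matches $\bm S_{i,j}$ exactly; the $y$-argument is symmetric.

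\emph{Energy conservation.} The plan is to multiply \eqref{FV-SGSWE} by $\bm V_{i,j}^\top$. The chain rule $\bm V_{i,j}^\top \tfrac{d}{dt}\bm U_{i,j}=\tfrac{d}{dt}\bm E_{i,j}$ handles the left-hand side. For the right-hand side, one uses the average/jump decomposition \eqref{identity_ave_jump} to write
\[
\bm V_{i,j}^\top\bigl(\mathcal{F}^{EC}_{i+\frac{1}{2},j}-\mathcal{F}^{EC}_{i-\frac{1}{2},j}\bigr)=\bigl[\overline{\bm V}_{i+\frac{1}{2},j}^{\top}\mathcal{F}^{EC}_{i+\frac{1}{2},j}-\overline{\bm V}_{i-\frac{1}{2},j}^{\top}\mathcal{F}^{EC}_{i-\frac{1}{2},j}\bigr]-\tfrac{1}{2}\llbracket\bm V\rrbracket_{i+\frac{1}{2},j}^{\top}\mathcal{F}^{EC}_{i+\frac{1}{2},j}-\tfrac{1}{2}\llbracket\bm V\rrbracket_{i-\frac{1}{2},j}^{\top}\mathcal{F}^{EC}_{i-\frac{1}{2},j},
\]
together with the analogous $y$-identity. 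The bracketed differences collapse telescopically and define the numerical entropy fluxes $\mathcal{H}_{i+\frac{1}{2},j}\coloneqq\overline{\bm V}_{i+\frac{1}{2},j}^{\top}\mathcal{F}^{EC}_{i+\frac{1}{2},j}-\overline{\bm\Psi}_{i+\frac{1}{2},j}$ and $\mathcal{K}_{i,j+\frac{1}{2}}\coloneqq\overline{\bm V}_{i,j+\frac{1}{2}}^{\top}\mathcal{G}^{EC}_{i,j+\frac{1}{2}}-\overline{\bm\Phi}_{i,j+\frac{1}{2}}$. The remaining task, and the main technical obstacle, is the pointwise Tadmor-type identity $\llbracket\bm V\rrbracket_{i+\frac{1}{2},j}^{\top}\mathcal{F}^{EC}_{i+\frac{1}{2},j}=\llbracket\bm\Psi\rrbracket_{i+\frac{1}{2},j}+(\text{face portion of the source})$, with the source portion cancelling against the half of $\bm V_{i,j}^{\top}\bm S_{i,j}$ attached to the $(i{+}\tfrac{1}{2})$-face. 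Expanding $\llbracket\bm V\rrbracket$ component-wise using its definition in \eqref{entropy_quantities_discrete}, then reducing the inner product with $\mathcal{F}^{EC}_{i+\frac{1}{2},j}$ by means of the symmetry and commutativity identities \eqref{commutative_prop} and the jump/average identities \eqref{identity0}, is precisely the step that the choice of EC flux was engineered to make work. This argument mirrors the one-dimensional proof in \cite{dai2024energy}, but here it must additionally absorb the new cross-terms $\mathcal{P}(\overline{\bm v})\mathcal{P}(\overline{\bm h})\overline{\bm u}$ and their $y$-analogues that encode two-dimensional momentum coupling.

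\emph{Truncation error.} Every factor appearing in $\mathcal{F}^{EC}$, $\mathcal{G}^{EC}$, and $\bm S_{i,j}$ is a composition of $\mathcal{P}(\cdot)$ with arithmetic averages or single jumps of neighboring cell values, both of which agree with their smooth continuous counterparts at the relevant interface midpoint up to $\mathcal{O}(\Delta x^2)$ or $\mathcal{O}(\Delta y^2)$. A Taylor expansion of each factor around the interface midpoint, combined with the outer divided differences $(\mathcal{F}_{i+\frac{1}{2},j}-\mathcal{F}_{i-\frac{1}{2},j})/\Delta x$ and its $y$-counterpart, then yields the desired $\mathcal{O}(\Delta x^2+\Delta y^2)$ consistency with \eqref{SGSWE}. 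This step is routine once the entropy-consistency identity of the previous paragraph has been verified, since that identity forces the high-order terms of the flux to have precisely the structure needed for central-type accuracy at the interface.
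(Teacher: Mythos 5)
Your plan follows the paper's route almost exactly: the paper proves the theorem by splitting it into the same three pieces — second-order consistency by Taylor expansion of the averaged quantities (Lemma \ref{lemma_LTE_EC}), well-balancedness by substituting the lake-at-rest data and using the average/jump identities so that $\mathcal{P}(\overline{\bm h})\llbracket \bm h+\bm B\rrbracket$ vanishes (Lemma \ref{lemma_wellbalanced}), and energy conservation by multiplying \eqref{FV-SGSWE} by $\bm V_{i,j}^\top$ and verifying a Tadmor-type interface condition, which is precisely the paper's Lemma \ref{lemma_sufficientcond_EC} together with the direct verification of \eqref{sufficient_condition_EC} using \eqref{identity0} and \eqref{commutative_prop}. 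So there is no methodological divergence; your well-balancedness and truncation-error arguments match the paper's.

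One concrete inaccuracy in the energy-conservation part needs fixing. You define the numerical entropy flux as $\mathcal{H}_{i+\frac{1}{2},j}=\overline{\bm V}_{i+\frac{1}{2},j}^{\top}\mathcal{F}^{EC}_{i+\frac{1}{2},j}-\overline{\bm\Psi}_{i+\frac{1}{2},j}$ and claim the source portion $g\llbracket\bm B\rrbracket_{i+\frac{1}{2},j}^{\top}\mathcal{P}(\overline{\bm h}_{i+\frac{1}{2},j})\overline{\bm u}_{i+\frac{1}{2},j}$ of the Tadmor identity cancels exactly against the half of $\bm V_{i,j}^{\top}\bm S_{i,j}$ attached to that face. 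It does not: the half-source carries $\bm u_{i,j}$ while the identity carries $\overline{\bm u}_{i+\frac{1}{2},j}$, and since $\overline{\bm u}_{i+\frac{1}{2},j}-\bm u_{i,j}=\tfrac{1}{2}\llbracket\bm u\rrbracket_{i+\frac{1}{2},j}$, a residual $\tfrac{g}{4}\llbracket\bm B\rrbracket_{i+\frac{1}{2},j}^{\top}\mathcal{P}(\overline{\bm h}_{i+\frac{1}{2},j})\llbracket\bm u\rrbracket_{i+\frac{1}{2},j}$ survives in each cell. Fortunately this residual is a pure face quantity and telescopes, so the remedy is simply to absorb it into the entropy flux, i.e.\ to use $\mathcal{H}_{i+\frac{1}{2},j}=\overline{\bm V}_{i+\frac{1}{2},j}^{\top}\mathcal{F}^{EC}_{i+\frac{1}{2},j}-\overline{\bm\Psi}_{i+\frac{1}{2},j}-\tfrac{g}{4}\llbracket\bm B\rrbracket_{i+\frac{1}{2},j}^{\top}\mathcal{P}(\overline{\bm h}_{i+\frac{1}{2},j})\llbracket\bm u\rrbracket_{i+\frac{1}{2},j}$ and its $y$-analogue, which is exactly the paper's \eqref{energy_flux}; with your $\mathcal{H}$ as stated, the identity \eqref{EC_def} does not hold. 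Finally, note that the interface identity $\llbracket\bm V\rrbracket^{\top}\mathcal{F}^{EC}=\llbracket\bm\Psi\rrbracket+g\llbracket\bm B\rrbracket^{\top}\mathcal{P}(\overline{\bm h})\overline{\bm u}$ is the actual content of the EC claim and cannot be left as ``the property the flux was engineered to satisfy'': it must be checked, as the paper does, by expanding $\llbracket\bm V\rrbracket$ from \eqref{entropy_quantities_discrete} and repeatedly applying $\mathcal{P}(\overline{\bm a})\llbracket\bm a\rrbracket=\tfrac{1}{2}\llbracket\mathcal{P}(\bm a)\bm a\rrbracket$ and $\llbracket\bm a\rrbracket^{\top}\overline{\bm b}+\llbracket\bm b\rrbracket^{\top}\overline{\bm a}=\llbracket\bm a^{\top}\bm b\rrbracket$, under which the two-dimensional cross term $\llbracket\bm v\rrbracket^{\top}\mathcal{P}(\overline{\bm v})\mathcal{P}(\overline{\bm h})\overline{\bm u}$ cancels against the $-\tfrac{1}{2}\llbracket\mathcal{P}(\bm v)\bm v\rrbracket^{\top}\mathcal{P}(\overline{\bm h})\overline{\bm u}$ contribution from the first block of $\llbracket\bm V\rrbracket$, as you anticipated.
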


The proof consists of several parts, which require nontrivial extensions of the one-dimensional results in \cite{dai2024energy}. However, because the proof components are relatively technical, we place their presentation in the appendix.
First, we demonstrate the second-order truncation error through direct computation.
\begin{lemma}[Second-order truncation error]\label{lemma_LTE_EC}
    By selecting the numerical fluxes and the source term \eqref{EC_flux}, the semi-discrete FV scheme \eqref{FV-SGSWE} has a local truncation error of $\mathcal{O}(\Delta x^2 + \Delta y ^2)$.
\end{lemma}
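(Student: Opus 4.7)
The plan is to substitute a smooth exact solution $\widehat{U}(x,y,t)$ of \eqref{SGSWE} into the semi-discrete scheme \eqref{FV-SGSWE} with the EC choices \eqref{EC_flux} and verify that the residual is $\mathcal{O}(\Delta x^2+\Delta y^2)$. I would organize the argument into three pieces: consistency of the EC numerical fluxes with the exact fluxes $\widehat{F},\widehat{G}$; second-order accuracy of the centered flux differences; and second-order accuracy of the discretized source term.

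First I would verify consistency: setting $\bm{U}_{i,j}=\bm{U}_{i+1,j}$ collapses every interfacial average $\overline{\bm{a}}_{i+\frac{1}{2},j}$ to $\bm{a}_{i,j}$, and one needs to show $\mathcal{F}^{EC}(\bm{U},\bm{U})=\widehat{F}(\bm{U})$. Using $\bm{u}=\mathcal{P}^{-1}(\bm{h})\bm{q^x}$ together with the commutativity identities in \eqref{commutative_prop}, manipulations such as $\mathcal{P}(\bm{u})\mathcal{P}(\bm{h})\bm{u}=\mathcal{P}(\bm{q^x})\mathcal{P}^{-1}(\bm{h})\bm{q^x}$ (and the analogous identities involving $\bm{v}$) line up each component of $\mathcal{F}^{EC}$ with the matching entry of \eqref{exact_flux_term}; a parallel check does the same for $\mathcal{G}^{EC}$. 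Both fluxes are moreover symmetric in their two cell arguments, since they are built from arithmetic averages $\overline{\bm{a}}_{i+\frac{1}{2},j}$.

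Next, for the flux-difference estimate, I would substitute $\bm{U}_{i,j}=\widehat{U}(x_i,y_j,t)$ and Taylor-expand about $(x_{i+\frac{1}{2}},y_j)$. Each arithmetic average satisfies $\overline{\bm{a}}_{i+\frac{1}{2},j}=\bm{a}(x_{i+\frac{1}{2}},y_j,t)+\mathcal{O}(\Delta x^2)$. Because $\mathcal{P}(\widehat{h})$ is strictly positive definite by hyperbolicity, $\mathcal{F}^{EC}$ is a smooth function of its inputs on a neighborhood of the exact solution, so combining smoothness with Step 1 gives $\mathcal{F}^{EC}_{i+\frac{1}{2},j}=\widehat{F}\bigl(\widehat{U}(x_{i+\frac{1}{2}},y_j,t)\bigr)+\mathcal{O}(\Delta x^2)$. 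Since the $\mathcal{O}(\Delta x^2)$ remainder is itself a smooth function of the midpoint location, a second Taylor expansion about $(x_i,y_j)$ yields the standard centered-difference estimate $[\mathcal{F}^{EC}_{i+\frac{1}{2},j}-\mathcal{F}^{EC}_{i-\frac{1}{2},j}]/\Delta x=\partial_x\widehat{F}(\widehat{U})(x_i,y_j,t)+\mathcal{O}(\Delta x^2)$. The argument transposes verbatim to $\mathcal{G}^{EC}$, contributing the $\partial_y\widehat{G}$ term with $\mathcal{O}(\Delta y^2)$ error. For the source term, $\llbracket\bm{B}\rrbracket_{i+\frac{1}{2},j}/\Delta x=\widehat{B}_x(x_{i+\frac{1}{2}},y_j)+\mathcal{O}(\Delta x^2)$ and $\mathcal{P}(\overline{\bm{h}}_{i+\frac{1}{2},j})=\mathcal{P}\bigl(\widehat{h}(x_{i+\frac{1}{2}},y_j,t)\bigr)+\mathcal{O}(\Delta x^2)$, so their product matches $\mathcal{P}(\widehat{h})\widehat{B_x}$ at $(x_{i+\frac{1}{2}},y_j)$ to second order; the arithmetic mean of the values at $x_{i\pm\frac{1}{2}}$ then recovers the same quantity at $x_i$ with $\mathcal{O}(\Delta x^2)$ error, and the $q^y$-component is treated identically with $\mathcal{O}(\Delta y^2)$ error.

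The main obstacle I expect is the algebraic verification in the consistency step: tracking several different compositions of $\mathcal{P}(\cdot)$ and $\mathcal{P}^{-1}(\cdot)$ across all three components of each flux and confirming they collapse back to \eqref{exact_flux_term} requires careful invocation of the symmetry and commutativity identities \eqref{commutative_prop}. Once consistency and symmetry are in hand, the remaining Taylor-expansion work is essentially mechanical, and assembling the three pieces immediately yields the claimed local truncation error of $\mathcal{O}(\Delta x^2+\Delta y^2)$.
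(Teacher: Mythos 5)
Your proposal is correct and takes essentially the same route as the paper's proof: your consistency check that $\mathcal{F}^{EC}(\bm U,\bm U)=\widehat F(\bm U)$ via the identities \eqref{commutative_prop} and \eqref{velocity_coeff} is precisely the paper's rewriting of the exact fluxes in \eqref{exact_flux_rewrite}, and your remaining steps (interfacial averages accurate to $\mathcal{O}(\Delta x^2)$ at the midpoints, smooth structured remainders so the centered flux differences and the averaged source term stay second order at cell centers) correspond to the paper's quantitative approximations \eqref{quantitative_approximations} and explicit Taylor expansions. The only difference is packaging—you invoke smoothness of the flux to handle all components at once where the paper expands one representative component by hand—so there is no gap.
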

See \cref{sec:LTE_EC} for the detailed proof. Next, we show that the discretization of the source term $\bm S_{i,j}$ in \eqref{EC_flux} satisfies the well-balanced property, which is the extension of results in \cite{dai2024energy} to two spatial dimensions.
\begin{lemma}[Well-balanced property]\label{lemma_wellbalanced}
    Suppose the source term is chosen as \eqref{EC_flux} and the bottom topography function $B$ is time-independent, then the semi-discrete FV scheme \eqref{FV-SGSWE} is well-balanced.
\end{lemma}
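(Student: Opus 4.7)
The plan is to substitute the stochastic lake-at-rest state into the scheme \eqref{FV-SGSWE} with fluxes and source given by \eqref{EC_flux} and verify directly that every component of $\frac{d}{dt}\bm U_{i,j}$ vanishes. Concretely, I would assume the discrete analog of \eqref{well-balance-vector}, namely $\bm{q^x}_{i,j}=\bm{q^y}_{i,j}=\mathbf{0}$ and $\bm{h}_{i,j}+\bm{B}_{i,j}=\widehat{C}$ for all $i,j$. The first immediate consequence, from \eqref{velocity_discrete}, is that $\bm{u}_{i,j}=\bm{v}_{i,j}=\mathbf{0}$, which in turn kills almost every term in $\mathcal{F}^{EC}_{i+\frac{1}{2},j}$ and $\mathcal{G}^{EC}_{i,j+\frac{1}{2}}$. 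What survives is precisely the pressure contribution: the second component of $\mathcal{F}^{EC}$ reduces to $\tfrac{1}{2}g\,\overline{\mathcal{P}(\bm h)\bm h}_{i+\frac{1}{2},j}$ and the third component of $\mathcal{G}^{EC}$ to $\tfrac{1}{2}g\,\overline{\mathcal{P}(\bm h)\bm h}_{i,j+\frac{1}{2}}$, with all other flux components equal to zero.

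The continuity equation (first component) is therefore trivially satisfied. For the $\bm{q^x}$-equation, the only nonzero contributions come from the $x$-direction flux difference and the $x$-component of $\bm S_{i,j}$. Here I would exploit the constancy of $\bm h+\bm B$ across cell interfaces, which gives $\llbracket \bm B\rrbracket_{i\pm\frac{1}{2},j}=-\llbracket \bm h\rrbracket_{i\pm\frac{1}{2},j}$, and then apply the identity
\begin{equation*}
\mathcal{P}\bigl(\overline{\bm h}_{i\pm\frac{1}{2},j}\bigr)\,\llbracket \bm h\rrbracket_{i\pm\frac{1}{2},j}=\tfrac{1}{2}\llbracket \mathcal{P}(\bm h)\bm h\rrbracket_{i\pm\frac{1}{2},j}
\end{equation*}
from \eqref{identity0}. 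This converts the source discretization into $\tfrac{g}{4\Delta x}\bigl(\mathcal{P}(\bm h_{i+1,j})\bm h_{i+1,j}-\mathcal{P}(\bm h_{i-1,j})\bm h_{i-1,j}\bigr)$, while the pressure flux difference telescopes to exactly the opposite quantity, so the two cancel. The $\bm{q^y}$-equation is handled identically using the $y$-direction versions of the same identities.

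The main obstacle, though quite mild, is keeping the interplay between the symmetric averaging $\overline{\mathcal{P}(\bm h)\bm h}_{i+\frac{1}{2},j}$ in the flux and the asymmetric $\mathcal{P}(\overline{\bm h}_{i+\frac{1}{2},j})\llbracket \bm B\rrbracket_{i+\frac{1}{2},j}$ in the source aligned; this is precisely why the source term in \eqref{EC_flux} uses the product of the \emph{averaged} $\bm h$ with the \emph{jump} in $\bm B$, rather than some other natural-looking discretization. The cancellation relies on identity \eqref{identity0}, and would fail, for instance, if $\overline{\mathcal{P}(\bm h)\bm h}$ in the flux were replaced by $\mathcal{P}(\overline{\bm h})\overline{\bm h}$. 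Once the cancellation is exhibited for the $\bm{q^x}$-equation, the $\bm{q^y}$-case is simply the $x\leftrightarrow y$ transcription, and the lemma follows.
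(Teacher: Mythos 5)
Your proposal is correct and follows essentially the same route as the paper: substitute the discrete lake-at-rest state so that $\bm u_{i,j}=\bm v_{i,j}=\bm 0$ (making the continuity components trivial), and then cancel the surviving pressure flux differences against the source discretization using identity \eqref{identity0} together with $\llbracket \bm h+\bm B\rrbracket_{i\pm\frac{1}{2},j}=\llbracket \bm h+\bm B\rrbracket_{i,j\pm\frac{1}{2}}=\bm 0$. The only cosmetic difference is bookkeeping: the paper applies the identity to the flux difference (its identity \eqref{identity1}) and then invokes the constancy of $\bm h+\bm B$, whereas you convert the source into the telescoping form and match it against the flux difference — the same computation arranged in a different order.
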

The proof of the above result is contained in \cref{sec:wellbalanced}. The final step in proving the main theorem of this section is to identify a sufficient condition that ensures the numerical fluxes produce an EC scheme. This is a two-spatial-dimensional extension of that in \cite{dai2024energy} and also a stochastic variant of that in the deterministic case in \cite{fjordholm2011well}.

\begin{lemma}[A sufficient condition for EC schemes]\label{lemma_sufficientcond_EC}
    Let $\bm S_{i,j}$ be selected as in \eqref{EC_flux}. Suppose the numerical fluxes $\mathcal{F}_{i+\frac{1}{2},j}, \mathcal{G}_{i,j+\frac{1}{2}}$ satisfy
    \begin{equation}\label{sufficient_condition_EC}
        \begin{split}
            \llbracket \bm V \rrbracket_{i+\frac{1}{2},j}^\top \mathcal{F}_{i+\frac{1}{2},j} = & \llbracket \bm \Psi \rrbracket_{i+\frac{1}{2},j} + g \llbracket \bm B\rrbracket_{i+\frac{1}{2},j}^\top\mathcal{P}(\overline{\bm h}_{i+\frac{1}{2},j})\overline{u}_{i+\frac{1}{2},j},\\
             \llbracket \bm V \rrbracket_{i,j+\frac{1}{2}}^\top \mathcal{G}_{i,j+\frac{1}{2}} = & \llbracket \bm \Phi \rrbracket_{i,j+\frac{1}{2}} + g \llbracket \bm B\rrbracket_{i,j+\frac{1}{2}}^\top\mathcal{P}(\overline{\bm h}_{i,j+\frac{1}{2}})\overline{v}_{i,j+\frac{1}{2}},
        \end{split}
    \end{equation}
where the discrete stochastic energy potentials $\bm \Psi$ and $\bm \Phi$ are defined in \eqref{entropy_quantities_discrete}.
Then the corresponding FV scheme \eqref{FV-SGSWE} is an EC scheme, i.e., it satisfies \eqref{EC_def}, with the numerical energy fluxes given by,
\begin{equation}\label{energy_flux}
    \begin{split}
        \mathcal{H}_{i+\frac{1}{2},j} & \coloneqq  \overline{\bm V}_{i+\frac{1}{2},j}^\top \mathcal{F}_{i+\frac{1}{2},j} - \overline{\bm \Psi}_{i+\frac{1}{2},j} - \frac{g}{4}\llbracket \bm B \rrbracket_{i+\frac{1}{2},j}^\top \mathcal{P}(\overline{\bm h}_{i+\frac{1}{2},j})\llbracket \bm u \rrbracket_{i+\frac{1}{2},j}\\
          \mathcal{K}_{i,j+\frac{1}{2}} & \coloneqq  \overline{\bm V}_{i,j+\frac{1}{2}}^\top \mathcal{G}_{i,j+\frac{1}{2}} - \overline{\bm \Phi}_{i,j+\frac{1}{2}} - \frac{g}{4}\llbracket \bm B \rrbracket_{i,j+\frac{1}{2}}^\top \mathcal{P}(\overline{\bm h}_{i,j+\frac{1}{2}})\llbracket \bm v \rrbracket_{i,j+\frac{1}{2}}.
    \end{split}
\end{equation}
\end{lemma}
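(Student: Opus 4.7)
The plan is to compute $\tfrac{d}{dt}\bm E_{i,j}$ by chain rule, substitute the semi-discrete update \eqref{FV-SGSWE}, split the entropy variable at each interface using \eqref{identity_ave_jump}, apply the sufficient condition \eqref{sufficient_condition_EC} to eliminate the jump-times-flux terms, and finally reconcile the residual topography contributions with $\bm V_{i,j}^\top \bm S_{i,j}$ so that everything telescopes into the announced form.

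More concretely, I would first write
\begin{equation*}
  \frac{d}{dt}\bm E_{i,j} = \bm V_{i,j}^\top \frac{d}{dt}\bm U_{i,j} = -\frac{\bm V_{i,j}^\top(\mathcal{F}_{i+\frac12,j}-\mathcal{F}_{i-\frac12,j})}{\Delta x} - \frac{\bm V_{i,j}^\top(\mathcal{G}_{i,j+\frac12}-\mathcal{G}_{i,j-\frac12})}{\Delta y} + \bm V_{i,j}^\top \bm S_{i,j},
\end{equation*}
which is valid by the definition $\bm V_{i,j} = (\partial \bm E_{i,j}/\partial \bm U_{i,j})^\top$ from \eqref{entropy_quantities_discrete}. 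For the $x$-flux difference I would use the two equivalent expansions $\bm V_{i,j} = \overline{\bm V}_{i+\frac12,j} - \tfrac12 \llbracket \bm V\rrbracket_{i+\frac12,j} = \overline{\bm V}_{i-\frac12,j} + \tfrac12 \llbracket \bm V\rrbracket_{i-\frac12,j}$ from \eqref{identity_ave_jump}, which turns the $x$-contribution into a telescoping piece $-\tfrac{1}{\Delta x}(\overline{\bm V}_{i+\frac12,j}^\top\mathcal{F}_{i+\frac12,j}-\overline{\bm V}_{i-\frac12,j}^\top\mathcal{F}_{i-\frac12,j})$ plus one-half times $\llbracket\bm V\rrbracket^\top \mathcal{F}$ on both interfaces. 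The sufficient condition \eqref{sufficient_condition_EC} replaces each $\llbracket\bm V\rrbracket^\top\mathcal{F}$ by $\llbracket\bm\Psi\rrbracket + g\llbracket\bm B\rrbracket^\top \mathcal{P}(\overline{\bm h})\overline{\bm u}$, and the identity $\tfrac12(\llbracket\bm\Psi\rrbracket_{i+\frac12,j}+\llbracket\bm\Psi\rrbracket_{i-\frac12,j}) = \overline{\bm\Psi}_{i+\frac12,j}-\overline{\bm\Psi}_{i-\frac12,j}$ (again from \eqref{identity_ave_jump}) puts the $\bm\Psi$ part into telescoping form.

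The delicate step is reconciling the leftover $\llbracket\bm B\rrbracket$ terms. Since only the $x$-component of $\bm S_{i,j}$ in \eqref{EC_flux} couples to $\bm u_{i,j}$ (the second block of $\bm V_{i,j}$), I would use the symmetry of $\mathcal{P}$ to rewrite $\llbracket\bm B\rrbracket^\top\mathcal{P}(\overline{\bm h})\overline{\bm u}$ as $\overline{\bm u}^\top \mathcal{P}(\overline{\bm h})\llbracket\bm B\rrbracket$, and then split $\overline{\bm u}_{i\pm\frac12,j} = \bm u_{i,j} \pm \tfrac12 \llbracket\bm u\rrbracket_{i\pm\frac12,j}$. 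The pieces proportional to $\bm u_{i,j}$ cancel exactly against $\bm V_{i,j}^\top \bm S_{i,j}$, and the remaining pieces proportional to $\llbracket\bm u\rrbracket$ combine as $\tfrac{g}{4\Delta x}(\llbracket\bm B\rrbracket^\top\mathcal{P}(\overline{\bm h})\llbracket\bm u\rrbracket|_{i+\frac12} - \llbracket\bm B\rrbracket^\top\mathcal{P}(\overline{\bm h})\llbracket\bm u\rrbracket|_{i-\frac12})$, which is exactly minus the telescoping difference of the topography corrector in $\mathcal{H}$ from \eqref{energy_flux}.

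The $y$-direction is handled symmetrically, with $\bm v$, $\bm\Phi$, $\mathcal{G}$, $\mathcal{K}$ replacing $\bm u$, $\bm\Psi$, $\mathcal{F}$, $\mathcal{H}$, and the $y$-component of $\bm S_{i,j}$ absorbing the $\bm v_{i,j}$-proportional terms. Assembling the $x$- and $y$-contributions yields the claimed form \eqref{EC_def} with $\mathcal{H}, \mathcal{K}$ as in \eqref{energy_flux}. The main obstacle is the careful accounting in the topography reconciliation: one must track signs and use both the symmetry of $\mathcal{P}$ and the two equivalent expressions for $\bm u_{i,j}$ via interface averages and jumps. Once these identities are arranged correctly, the cancellation is essentially forced by the form of $\bm S_{i,j}$ chosen in \eqref{EC_flux}.
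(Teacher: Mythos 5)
Your proposal is correct and follows essentially the same route as the paper's proof: multiply the semi-discrete scheme by $\bm V_{i,j}^\top$, use the average/jump identities \eqref{identity_ave_jump} to split the interfacial terms, invoke the sufficient condition \eqref{sufficient_condition_EC}, and cancel the residual $\llbracket\bm B\rrbracket$-terms proportional to $\bm u_{i,j}$, $\bm v_{i,j}$ against $\bm V_{i,j}^\top\bm S_{i,j}$ via the symmetry of $\mathcal{P}$, leaving exactly the telescoping fluxes \eqref{energy_flux}. The only difference is organizational (you telescope first and then apply the condition, whereas the paper rewrites each $\bm V_{i,j}^\top\mathcal{F}_{i\pm\frac12,j}$ individually), and your accounting of the $\overline{\bm u}=\bm u_{i,j}\pm\tfrac12\llbracket\bm u\rrbracket$ split and the resulting $\tfrac{g}{4}$ corrector terms matches the paper's computation.
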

For the proof, see \cref{sec:sufficien_condition_EC}. Given all essential lemmas above, we can complete the proof of \Cref{theorem_EC}.

\begin{proof}[Proof of Theorem \ref{theorem_EC}]
    It follows from Lemmas \ref{lemma_LTE_EC} and \ref{lemma_wellbalanced} that the scheme \eqref{FV-SGSWE} is well-balanced and second-order accurate for smooth solutions. To show that it is EC, it suffices to verify the condition in Lemma \ref{lemma_sufficientcond_EC}, which can be accomplished by direct computation:
    \begin{equation}
    \begin{split}
       &  \llbracket \bm V \rrbracket_{i+\frac{1}{2},j}^\top\mathcal{F}_{i+\frac{1}{2},j}^{EC}\\
\overset{\eqref{entropy_quantities_discrete},\eqref{EC_flux}}{=} & \left( g\left(\llbracket \bm h\rrbracket_{i+\frac{1}{2},j} + \llbracket \bm B \rrbracket_{i+\frac{1}{2},j}  \right)  - \frac{1}{2}\llbracket \mathcal{P}(\bm u)\bm u \rrbracket_{i+\frac{1}{2},j} - \frac{1}{2}\llbracket \mathcal{P}(\bm v)\bm v \rrbracket_{i+\frac{1}{2},j}    \right)^\top \mathcal{P}(\overline{\bm h}_{i+\frac{1}{2},j} )\overline{\bm u}_{i+\frac{1}{2},j} \\
& + \llbracket \bm u \rrbracket_{i+\frac{1}{2},j} ^\top\left(  \frac{1}{2}g \left(\overline{\mathcal{P}(\bm h)\bm h} \right)_{i+\frac{1}{2},j} + \mathcal{P}(\overline{\bm u}_{i+\frac{1}{2},j} )\mathcal{P}(\overline{\bm h}_{i+\frac{1}{2},j})\overline{\bm u}_{i+\frac{1}{2},j}   \right)\\
& + \llbracket \bm v \rrbracket_{i+\frac{1}{2},j} ^\top \mathcal{P}(\overline{\bm v}_{i+\frac{1}{2},j} )\mathcal{P}(\overline{\bm h}_{i+\frac{1}{2},j})\overline{\bm u}_{i+\frac{1}{2},j} \\
\overset{\eqref{identity0}}{=}& g\left( \llbracket \bm h \rrbracket_{i+\frac{1}{2},j} + \llbracket \bm B \rrbracket_{i+\frac{1}{2},j} \right)^\top \mathcal{P}(\overline{\bm h}_{i+\frac{1}{2},j})\overline{\bm u}_{i+\frac{1}{2},j} + \frac{1}{2}g\llbracket \bm u\rrbracket_{i+\frac{1}{2},j}^\top\left(\overline{\mathcal{P}(\bm h)\bm h} \right)_{i+\frac{1}{2},j}\\
\overset{\eqref{identity0}}{=}& \frac{1}{2}g \llbracket \mathcal{P}(\bm h)\bm h \rrbracket_{i+\frac{1}{2},j}^\top\overline{\bm u}_{i+\frac{1}{2},j} + g\llbracket \bm B \rrbracket_{i+\frac{1}{2},j}^\top  \mathcal{P}(\overline{\bm h}_{i+\frac{1}{2},j})\overline{\bm u}_{i+\frac{1}{2},j} + \frac{1}{2}g\llbracket \bm u\rrbracket_{i+\frac{1}{2},j}^\top\left(\overline{\mathcal{P}(\bm h)\bm h} \right)_{i+\frac{1}{2},j}\\
\overset{\eqref{identity0}}{=}&  \frac{1}{2}g \llbracket \bm u^\top\mathcal{P}(\bm h)\bm h \rrbracket_{i+\frac{1}{2},j} + g\llbracket \bm B \rrbracket_{i+\frac{1}{2},j}^\top  \mathcal{P}(\overline{\bm h}_{i+\frac{1}{2},j})\overline{\bm u}_{i+\frac{1}{2},j}\\
= & \llbracket \bm \Psi \rrbracket_{i+\frac{1}{2},j} + g \llbracket \bm B \rrbracket_{i+\frac{1}{2},j}^\top\mathcal{P}(\overline{\bm h}_{i+\frac{1}{2},j})\overline{\bm u}_{i+\frac{1}{2},j}.
    \end{split}
    \end{equation}
Also, by using the definition of $\mathcal{G}_{i,j+\frac{1}{2}}^{EC}$ \eqref{EC_flux}, $\bm \Phi$ \eqref{entropy_quantities_discrete}, and equalities \eqref{identity0}, through an analogous computation as above, one can obtain
\begin{equation}
     \llbracket \bm V \rrbracket_{i,j+\frac{1}{2}}^\top \mathcal{G}^{EC}_{i,j+\frac{1}{2}} = \llbracket \bm \Phi \rrbracket_{i,j+\frac{1}{2}} + g \llbracket \bm B\rrbracket_{i,j+\frac{1}{2}}^\top\mathcal{P}(\overline{\bm h}_{i,j+\frac{1}{2}})\overline{v}_{i,j+\frac{1}{2}},
\end{equation}
which verifies \eqref{sufficient_condition_EC}. Hence, it shows that the scheme \eqref{FV-SGSWE}, with the numerical fluxes and source term defined in \eqref{EC_flux}, is energy conservative.
    
\end{proof}

\subsection{A first-order energy stable scheme (ES1)}\label{Sec4_3}
The scheme defined in the previous part preserves the energy of the shallow water equation system, which can lead to non-physical oscillations, since the energy is supposed to dissipate in the presence of shocks. Some existing work can resolve this issue by introducing artificial numerical viscosity \cite{fjordholm2009energy,fjordholm2011well,fjordholm2012arbitrarily,tadmor1987numerical,tadmor2003entropy}. We adopt this approach by extending the energy-stable diffusion operators proposed in \cite{fjordholm2009energy,fjordholm2011well,fjordholm2012arbitrarily} to the stochastic case in two spatial dimensions.

First, we recall the traditional Roe-type diffusion (``$RD$'') applied to a conservation law, which involves the EC fluxes, defined as follows:
\begin{align}
    \mathcal{F}_{i+\frac{1}{2},j}^{RD} &\coloneqq \mathcal{F}_{i+\frac{1}{2},j}^{EC} - \frac{1}{2}\bm Q_{i+\frac{1}{2},j}^{Roe,F}\llbracket \bm U \rrbracket_{i+\frac{1}{2},j}, & \mathcal{G}_{i,j+\frac{1}{2}}^{RD} &\coloneqq \mathcal{G}_{i,j+\frac{1}{2}}^{EC} - \frac{1}{2}\bm Q_{i,j+\frac{1}{2}}^{Roe,G}\llbracket \bm U \rrbracket_{i,j+\frac{1}{2}} 
\end{align}
where $\bm Q^{Roe,F}$ and $\bm Q^{Roe,G}$ are positive semi-definite matrices defined by diagonalizing the Jacobians of the interfacial fluxes at the Roe-averaged state $\overline{\bm{U}}$:
\begin{align}
        \bm Q_{i+\frac{1}{2},j}^{Roe,F} & \coloneqq  \bm T^{Roe}_F  \big| \bm \Lambda_F^{Roe}\big|\left( \bm T^{Roe}_F\right)^{-1}, & \frac{\partial \widehat{F}}{\partial \widehat{ U}}(\overline{\bm U}_{i+\frac{1}{2},j}) &= \bm T^{Roe}_F \bm \Lambda_F^{Roe}\left( \bm T^{Roe}_F\right)^{-1},\\
        \bm Q_{i,j+\frac{1}{2}}^{Roe,G} & \coloneqq  \bm T^{Roe}_G  \big| \bm \Lambda_G^{Roe}\big|\left( \bm T^{Roe}_G\right)^{-1}, & \frac{\partial \widehat{G}}{\partial \widehat{ U}}(\overline{\bm U}_{i,j+\frac{1}{2}}) &= \bm T^{Roe}_G \bm \Lambda_G^{Roe}\left( \bm T^{Roe}_G\right)^{-1}.
\end{align}
The semi-discrete scheme \eqref{FV-SGSWE} with the numerical fluxes $\mathcal{F}_{i+\frac{1}{2},j}^{RD}$ and $\mathcal{G}_{i,j+\frac{1}{2}}^{RD} $ would behave like
\begin{equation}
    \begin{split}
       \frac{d}{dt}\bm{U}_{i,j}(t) = &  - \frac{\mathcal{F}^{EC}_{i+\frac{1}{2},j}-\mathcal{F}^{EC}_{i-\frac{1}{2},j}}{\Delta x} - \frac{\mathcal{G}^{EC}_{i,j+\frac{1}{2}}-\mathcal{G}^{EC}_{i,j-\frac{1}{2}}}{\Delta y} + \bm{S}_{i,j} \\
        & + \frac{1}{2\Delta x} \left( \bm Q_{i+\frac{1}{2},j}^{Roe,F}\llbracket \bm U \rrbracket_{i+\frac{1}{2},j} - \bm Q_{i-\frac{1}{2},j}^{Roe,F}\llbracket \bm U \rrbracket_{i-\frac{1}{2},j} \right) \\
         & + \frac{1}{2\Delta y} \left( \bm Q_{i,j+\frac{1}{2}}^{Roe,G}\llbracket \bm U \rrbracket_{i,j+\frac{1}{2}} - \bm Q_{i,j-\frac{1}{2}}^{Roe,G}\llbracket \bm U \rrbracket_{i,j-\frac{1}{2}} \right)  \\
         \approx & -\frac{1}{\Delta x}\left( \widehat{F}(\widehat{U} )\big|_{(x_{i+\frac{1}{2}} ,y_j)} - \widehat{F}(\widehat{U} )\big|_{(x_{i-\frac{1}{2}} ,y_j)} \right)-\frac{1}{\Delta y}\left( \widehat{G}(\widehat{U} )\big|_{(x_{i} ,y_{j+\frac{1}{2}})} - \widehat{G}(\widehat{U} )\big|_{(x_{i} ,y_{j-\frac{1}{2}})} \right) \\
          & + \Delta x \bm Q_F \widehat{U}_{xx}\big|_{(x_i,y_j)} + \Delta y\bm Q_G \widehat{U}_{yy}\big|_{(x_i,y_j)} + S(\widehat{U})\big|_{(x_i,y_j)},
    \end{split}
\end{equation}
where $\bm Q_F$ and $\bm Q_G$ are positive-definite matrices, and $\widehat{U}_{xx}$ and $ \widehat{U}_{yy}$ represent the second-order spatial derivatives of the state variables in the PDE, thus introducing diffusion into the EC scheme.

It is convenient to ensure energy stability by adding a numerical diffusion term operating on the entropic variables $\bm V$ rather than on the conservative variables $\bm U$. We introduce the following first-order energy stable numerical fluxes:
\begin{align}\label{ES1_flux}
    \mathcal{F}_{i+\frac{1}{2},j}^{ES1} &\coloneqq \mathcal{F}_{i+\frac{1}{2},j}^{EC} - \frac{1}{2}\bm Q_{i+\frac{1}{2},j}^{ES,F}\llbracket \bm V \rrbracket_{i+\frac{1}{2},j}, & \mathcal{G}_{i,j+\frac{1}{2}}^{ES1} &\coloneqq \mathcal{G}_{i,j+\frac{1}{2}}^{EC} - \frac{1}{2}\bm Q_{i,j+\frac{1}{2}}^{ES,G}\llbracket \bm V \rrbracket_{i,j+\frac{1}{2}},
\end{align}
where $\bm Q_{i+\frac{1}{2},j}^{ES,F}$ and $\bm Q_{i,j+\frac{1}{2}}^{ES,G}$ are positive definite matrices identified in a Roe-type manner from the adjacent states $(\bm U_{i,j}, \bm U_{i+1,j})$ at the cell interface $(x_{i+\frac{1}{2}},y_j)$, and $(\bm U_{i,j}, \bm U_{i,j+1})$ at the cell interface $(x_i,y_{j+\frac{1}{2}})$, respectively. The term $\bm V_{i,j}$, defined in \eqref{entropy_quantities_discrete}, serves as a second-order approximation of the cell average of the entropy variable $\widehat{V}.$ The matrices $\bm Q^{ES,F}$ and $\bm Q^{ES,G}$ will be defined in terms of the following Roe-type energy stable operator,
\begin{align}
  \mathcal{Q}_{i+\frac{1}{2},j}(\bm U_{i,j}, \bm U_{i+1,j}) &\coloneqq \bm T_F |\bm \Lambda_F| \bm T_F^\top \ge 0, & 
\mathcal{Q}_{i,j+\frac{1}{2}}(\bm U_{i,j}, \bm U_{i,j+1}) &\coloneqq \bm T_G |\bm \Lambda_G| \bm T_G^\top \ge 0, & 
\end{align}
where the matrices $\bm T_F,\bm \Lambda_F, \bm T_G, \bm\Lambda_G$ are defined from the eigendecomposition of the flux Jacobians $  \frac{\partial \widehat{F}}{\partial \widehat{U}}$ and 
$  \frac{\partial \widehat{G}}{\partial \widehat{U}}$ evaluated at the following Roe-type averaged state,
\begin{subequations}\label{def_T_F}
\begin{align}
    \frac{\partial \widehat{F}}{\partial \widehat{U}}\left( \widetilde{\bm U}_{i+\frac{1}{2},j} \right) &= \bm T_F \bm \Lambda_F \bm T_F^{-1}, & \widetilde{\bm U}_{i+\frac{1}{2},j} &\coloneqq \begin{pmatrix}
    \overline{\bm h}_{i+\frac{1}{2},j} \\ \mathcal{P}(\overline{\bm h}_{i+\frac{1}{2},j})\overline{\bm u}_{i+\frac{1}{2},j}\\
    \mathcal{P}(\overline{\bm h}_{i+\frac{1}{2},j})\overline{\bm v}_{i+\frac{1}{2},j}, 
  \end{pmatrix},
      \\
    \frac{\partial \widehat{G}}{\partial \widehat{U}}\left( \widetilde{\bm U}_{i,j+\frac{1}{2}} \right)& = \bm T_G \bm \Lambda_G \bm T_G^{-1}, & \widetilde{\bm U}_{i,j+\frac{1}{2}} &\coloneqq \begin{pmatrix}
    \overline{\bm h}_{i,j+\frac{1}{2}} \\ \mathcal{P}(\overline{\bm h}_{i,j+\frac{1}{2}})\overline{\bm u}_{i,j+\frac{1}{2}}\\
    \mathcal{P}(\overline{\bm h}_{i,j+\frac{1}{2}})\overline{\bm v}_{i,j+\frac{1}{2}}
\end{pmatrix}.
\end{align}
\end{subequations}
Then we set the diffusion matrices $\bm Q^{ES,F}$ and $\bm Q^{ES,G}$ appearing in the definition \eqref{ES1_flux} of the numerical fluxes $\mathcal{F}^{ES1}_{i+\frac{1}{2},j}$ and $\mathcal{G}^{ES1}_{i,j+\frac{1}{2}}$, respectively, as,  
\begin{subequations}\label{diff_matrix_ES1}
\begin{align}\label{diff_matrix_ES1_F}
  \bm Q_{i+\frac{1}{2},j}^{ES,F} &\coloneqq   \mathcal{Q}_{i+\frac{1}{2},j}(\bm U_{i,j}, \bm U_{i+1,j}) = \bm T_F |\bm \Lambda_F| \bm T_F^\top. \\
  \label{diff_matrix_ES1_G}
    \bm Q_{i,j+\frac{1}{2}}^{ES,G} &\coloneqq   \mathcal{Q}_{i,j+\frac{1}{2}}(\bm U_{i,j}, \bm U_{i,j+1}) = \bm T_G |\bm \Lambda_G| \bm T_G^\top,
\end{align}
\end{subequations}
The energy stable scheme in this subsection is constructed by using the numerical fluxes in \eqref{ES1_flux}, with the diffusion matrices given in \eqref{diff_matrix_ES1}. We codify the properties of this scheme as follows. 

\begin{theorem}[ES1 scheme]
    Consider the semi-discrete FV scheme \eqref{FV-SGSWE} with the source term given in \eqref{EC_flux} and diffusive numerical fluxes $\mathcal{F}^{ES1}_{i+\frac{1}{2},j}, \mathcal{G}^{ES1}_{i,j+\frac{1}{2}}$ defined in \eqref{ES1_flux}, and the diffusion matrices $\bm Q_{i+\frac{1}{2},j}^{ES,F},   \bm Q_{i,j+\frac{1}{2}}^{ES,G}$ as defined in \eqref{diff_matrix_ES1}. The resulting scheme is a first-order, well-balanced, and energy stable scheme.
\end{theorem}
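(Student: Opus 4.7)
The plan is to verify the three properties, namely well-balancedness, first-order accuracy, and energy stability, by treating the ES1 scheme as a diffusive perturbation of the EC scheme from \cref{theorem_EC}. The key algebraic device throughout is the splitting identity from \eqref{identity_ave_jump}, which allows any quantity $\bm V_{i,j}$ at a cell center to be written as an average at an adjacent interface minus or plus half of the corresponding jump.

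For the well-balanced property, I would observe that at a stochastic lake-at-rest state \eqref{well-balance-vector} we have $\widehat{u} = \widehat{v} = \bm 0$ and $\widehat{h} + \widehat{B} = \widehat{C}$, so the entropy variable in \eqref{entropy_quantities_discrete} reduces to $\bm V_{i,j} = (g\, \widehat{C}^\top,\, \bm 0^\top,\, \bm 0^\top)^\top$, which is spatially constant. Hence every jump $\llbracket \bm V \rrbracket_{i\pm\frac12,j}$ and $\llbracket \bm V \rrbracket_{i,j\pm\frac12}$ vanishes, the diffusive corrections in \eqref{ES1_flux} disappear, and $\mathcal{F}^{ES1} \equiv \mathcal{F}^{EC}$, $\mathcal{G}^{ES1} \equiv \mathcal{G}^{EC}$. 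Combined with the EC source term in \eqref{EC_flux}, \cref{lemma_wellbalanced} immediately gives the well-balanced property. For first-order accuracy, I would expand $\llbracket \bm V \rrbracket_{i+\frac12,j} = \Delta x\,\widehat{V}_x|_{(x_{i+\frac12},y_j)} + \mathcal{O}(\Delta x^3)$ by Taylor expansion of a smooth solution, so each diffusive correction $\tfrac12 \bm Q^{ES,F}_{i+\frac12,j} \llbracket \bm V \rrbracket_{i+\frac12,j}$ is $\mathcal{O}(\Delta x)$. The contribution to the semi-discrete residual is then $(1/\Delta x) \cdot \mathcal{O}(\Delta x^2) = \mathcal{O}(\Delta x)$, and combined with the $\mathcal{O}(\Delta x^2+\Delta y^2)$ EC truncation from \cref{lemma_LTE_EC}, the overall local truncation error is $\mathcal{O}(\Delta x + \Delta y)$, i.e., first order.

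The main work, and the step I expect to require the most care, is the energy stability estimate. I would take the inner product of \eqref{FV-SGSWE} with $\bm V_{i,j}^\top$: the left-hand side becomes $\tfrac{d}{dt}\bm E_{i,j}$ by the chain rule, since $\bm V_{i,j} = (\partial \bm E_{i,j}/\partial \bm U_{i,j})^\top$. The EC portion of the right-hand side contributes exactly $-(1/\Delta x)(\mathcal{H}^{EC}_{i+\frac12,j} - \mathcal{H}^{EC}_{i-\frac12,j}) - (1/\Delta y)(\mathcal{K}^{EC}_{i,j+\frac12}-\mathcal{K}^{EC}_{i,j-\frac12})$ by applying \cref{lemma_sufficientcond_EC} (whose sufficient condition was verified in the proof of \cref{theorem_EC}). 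The additional $x$-direction diffusive contribution is
\begin{equation*}
\frac{1}{2\Delta x}\Big[\bm V_{i,j}^\top \bm Q^{ES,F}_{i+\frac12,j}\llbracket \bm V\rrbracket_{i+\frac12,j} - \bm V_{i,j}^\top\bm Q^{ES,F}_{i-\frac12,j}\llbracket \bm V\rrbracket_{i-\frac12,j}\Big],
\end{equation*}
and I would rewrite $\bm V_{i,j} = \overline{\bm V}_{i+\frac12,j} - \tfrac12 \llbracket \bm V\rrbracket_{i+\frac12,j}$ on the first term and $\bm V_{i,j} = \overline{\bm V}_{i-\frac12,j} + \tfrac12 \llbracket \bm V\rrbracket_{i-\frac12,j}$ on the second. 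This cleanly splits the diffusive contribution into a telescoping flux piece that absorbs into a new numerical entropy flux
\begin{equation*}
\mathcal{H}^{ES1}_{i+\frac12,j} \coloneqq \mathcal{H}^{EC}_{i+\frac12,j} - \tfrac12 \overline{\bm V}_{i+\frac12,j}^\top \bm Q^{ES,F}_{i+\frac12,j}\llbracket \bm V\rrbracket_{i+\frac12,j},
\end{equation*}
plus a residual $-\tfrac{1}{4\Delta x}\bigl(\llbracket \bm V\rrbracket_{i+\frac12,j}^\top \bm Q^{ES,F}_{i+\frac12,j}\llbracket \bm V\rrbracket_{i+\frac12,j} + \llbracket \bm V\rrbracket_{i-\frac12,j}^\top \bm Q^{ES,F}_{i-\frac12,j}\llbracket \bm V\rrbracket_{i-\frac12,j}\bigr)$. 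The same manipulation in the $y$-direction yields the analogue with $\mathcal{K}^{ES1}$ and a corresponding residual involving $\bm Q^{ES,G}$. The hard part, and the key reason this argument closes, is that by the spectral construction \eqref{diff_matrix_ES1} the diffusion matrices are symmetric positive semi-definite; hence every residual quadratic form is non-positive, and assembling them yields the inequality \eqref{ES_def}, establishing energy stability.
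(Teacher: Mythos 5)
Your proposal is correct and follows essentially the same route as the paper: jumps of $\bm V$ vanish at lake-at-rest so the ES1 fluxes reduce to the well-balanced EC fluxes, the diffusion term $\tfrac12\bm Q\llbracket\bm V\rrbracket = \mathcal{O}(\Delta x)$ degrades the truncation error to first order, and multiplying \eqref{FV-SGSWE} by $\bm V_{i,j}^\top$ with the splitting \eqref{identity_ave_jump} yields exactly the paper's ES1 entropy fluxes \eqref{energy_flux_ES1} plus non-positive residual quadratic forms in the jumps, since $\bm Q^{ES,F}=\bm T_F|\bm\Lambda_F|\bm T_F^\top$ and $\bm Q^{ES,G}=\bm T_G|\bm\Lambda_G|\bm T_G^\top$ are symmetric positive semi-definite. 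Your write-up simply makes the interface-splitting step more explicit than the paper's, which invokes the analogous computation from \cref{lemma_sufficientcond_EC}.
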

\begin{proof}
  Note that we have already shown that the scheme \eqref{FV-SGSWE} with EC fluxes $\mathcal{F}_{i+\frac{1}{2},j}^{EC}$ and $ \mathcal{G}_{i,j+\frac{1}{2}}^{EC}$ is second-order accurate. Therefore, using the definition of $\bm V_{i,j}$, it is straightforward to show the following approximation
\begin{align}
    \llbracket \bm V \rrbracket_{i+\frac{1}{2},j} &\approx \Delta x \widehat{V}_x(x_{i+\frac{1}{2}},y_j), & \llbracket \bm V \rrbracket_{i,j+\frac{1}{2}} &\approx \Delta y \widehat{V}_y(x_i,y_{j+\frac{1}{2}}),
\end{align}
which implies that the artificial diffusion term in \eqref{ES1_flux} introduces a first-order local truncation error in the scheme.

  To demonstrate that this scheme is well-balanced, we consider the stochastic lake-at-rest initial data described as introduced in \eqref{well_balanced_IC} in \cref{Sec4_1}. This, combined with the definition of $\bm V_{i,j}$ in \eqref{entropy_quantities_discrete}, leads to the conclusion that $\llbracket \bm V \rrbracket_{i+\frac{1}{2},j} = \llbracket \bm V \rrbracket_{i,j+\frac{1}{2}} = \bm 0$. Since the EC fluxes are shown to be well-balanced in Lemma \ref{lemma_wellbalanced}, it follows that the ES1 scheme is also well-balanced.

Finally, we need to show the ES property \eqref{ES_def}. Define the ES1 energy fluxes
\begin{equation}\label{energy_flux_ES1}
\begin{split}
    \mathcal{H}_{i+\frac{1}{2},j}^{ES1} &\coloneqq  \mathcal{H}_{i+\frac{1}{2},j}^{EC} -  \frac{1}{2}\overline{\bm V}_{i+\frac{1}{2},j}^\top\bm Q_{i+\frac{1}{2},j}^{ES,F} \llbracket \bm V \rrbracket_{i+\frac{1}{2},j}\\
     \mathcal{K}_{i,j+\frac{1}{2}}^{ES1} &\coloneqq  \mathcal{K}_{i,j+\frac{1}{2}}^{EC} -  \frac{1}{2}\overline{\bm V}_{i,j+\frac{1}{2}}^\top\bm Q_{i,j+\frac{1}{2}}^{ES,G} \llbracket \bm V \rrbracket_{i,j+\frac{1}{2}},
\end{split}
\end{equation}
  where the EC energy fluxes $\mathcal{H}_{i+\frac{1}{2},j}^{EC}$ and $ \mathcal{K}_{i,j+\frac{1}{2}}^{EC}$ are defined in \eqref{energy_flux}. As in the proof of Lemma \ref{lemma_sufficientcond_EC} (\cref{sec:sufficien_condition_EC}), we multiply \eqref{FV-SGSWE} by $\bm V_{i,j}^\top$, and use a similar estimate with the ES1 energy fluxes defined above. This leads to the following estimate:
\begin{equation}
    \begin{split}
        \frac{d}{dt} \bm E_{i,j}(t) = & -\frac{1}{\Delta x}\left( \mathcal{H}_{i+\frac{1}{2},j}^{ES1} - \mathcal{H}_{i-\frac{1}{2},j}^{ES1} \right) - \frac{1}{\Delta y}\left( \mathcal{K}_{i,j+\frac{1}{2}}^{ES1} - \mathcal{K}_{i,j-\frac{1}{2}}^{ES1} \right) \\
        & - \frac{1}{4\Delta x} \left(   \llbracket \bm V \rrbracket_{i+\frac{1}{2},j}^\top \bm Q_{i+\frac{1}{2},j}^{ES,F} \llbracket \bm V \rrbracket_{i+\frac{1}{2},j}  + \llbracket \bm V \rrbracket_{i-\frac{1}{2},j}^\top \bm Q_{i-\frac{1}{2},j}^{ES,F} \llbracket \bm V \rrbracket_{i-\frac{1}{2},j} \right)\\
        & - \frac{1}{4\Delta y} \left(   \llbracket \bm V \rrbracket_{i,j+\frac{1}{2}}^\top \bm Q_{i,j+\frac{1}{2}}^{ES,G} \llbracket \bm V \rrbracket_{i,j+\frac{1}{2}}  + \llbracket \bm V \rrbracket_{i,j-\frac{1}{2}}^\top \bm Q_{i,j-\frac{1}{2}}^{ES,G} \llbracket \bm V \rrbracket_{i,j-\frac{1}{2}} \right).
    \end{split}
\end{equation}
Since the diffusion matrices $\bm Q_{i \pm \frac{1}{2},j}^{ES,F}, \bm Q_{i,j\pm \frac{1}{2}}^{ES,G}$ are positive semi-definite, this scheme satisfies the energy decay property \eqref{ES_def}, resulting in an ES scheme.
\end{proof}

\subsection{A second-order energy stable scheme (ES2)}\label{Sec4_4}
The ES1 scheme was derived by adding a first-order diffusion term to the second-order EC scheme. It is thus natural to develop a second-order ES scheme by incorporating a suitably constructed, second-order accurate diffusion term. This can be achieved through appropriate non-oscillatory second-order polynomial reconstructions of the entropy variable. We follow the idea of \cite{dai2024energy,fjordholm2012arbitrarily} to recover a non-oscillatory piecewise linear reconstruction. Let $\bm V_{i,j}^{E}$ and $\bm V_{i+1,j}^{W}$ denote the second-order reconstructions from the east and west, respectively, of the entropy variable $\bm V(x_{i+\frac{1}{2}},y_j)$. Similarly, let $\bm V_{i,j}^{N}$ and $\bm V_{i,j+1}^{S}$ represent the second-order reconstructions from the north and south of the entropy variable $\bm V(x_i,y_{j+\frac{1}{2}})$. I.e., if $\widetilde{\bm V}_{i,j}(x,y)$ is the polynomial reconstruction of the entropy variable $\bm V$ in the cell $\mathcal{I}_{i,j}$, then, 
\begin{align}
   \bm V_{i,j}^E &\coloneqq \lim_{ \substack {x \to x_{i+1/2}\\ y \to y_{j}}}\widetilde{\bm V}_{i,j}(x,y),& \bm V_{i,j}^W &\coloneqq \lim_{ \substack {x \to x_{i-1/2}\\ y \to y_{j}}}\widetilde{\bm V}_{i,j}(x,y),& \\
   \bm V_{i,j}^N &\coloneqq \lim_{ \substack {x \to x_{i}\\ y \to y_{j+1/2}}}\widetilde{\bm V}_{i,j}(x,y),& \bm V_{i,j}^S &\coloneqq \lim_{ \substack {x \to x_{i}\\ y \to y_{j-1/2}}}\widetilde{\bm V}_{i,j}(x,y).
\end{align}
Using these reconstructions, we can compute second-order accurate jumps of the entropy variable, 
\begin{align}\label{jump_V}
    \llangle \bm V \rrangle_{i+\frac{1}{2},j}& \coloneqq  \bm V_{i+1,j}^{W} - \bm V_{i,j}^{E}, & \llangle \bm V \rrangle_{i,j+\frac{1}{2}} &\coloneqq \bm V_{i,j+1}^{S} - \bm V_{i,j}^{N}.
\end{align}
Using the second-order jumps in the energy stable fluxes, we obtain the ES2 fluxes,
\begin{align}\label{ES2_flux}
    \mathcal{F}_{i+\frac{1}{2},j}^{ES2} &\coloneqq \mathcal{F}_{i+\frac{1}{2},j}^{EC} - \frac{1}{2}\bm Q_{i+\frac{1}{2},j}^{ES,F}\llangle \bm V \rrangle_{i+\frac{1}{2},j}, & \mathcal{G}_{i,j+\frac{1}{2}}^{ES2} &\coloneqq \mathcal{G}_{i,j+\frac{1}{2}}^{EC} - \frac{1}{2}\bm Q_{i,j+\frac{1}{2}}^{ES,G}\llangle \bm V \rrangle_{i,j+\frac{1}{2}},
\end{align}
where the positive definite matrices $\bm Q^{ES,F}$ and $\bm Q^{ES,G}$ are the same as those in the ES1 scheme \eqref{ES1_flux}. The remaining part of this section describes how to compute $  \bm V_{i+1,j}^{W}, \bm V_{i,j}^{E} , \bm V_{i,j+1}^{S},  \bm V_{i,j}^{N}$ in a manner that ensures entropy stability.

The first step is to use a scaled version of the entropy variables as follows:
\begin{equation}\label{ES2_def_w}
    \bm w_{i,j}^E \coloneqq  (\bm T_F)_{i+\frac{1}{2},j}^\top \bm V_{i,j},\quad w_{i,j}^{W}  \coloneqq  (\bm T_F)_{i-\frac{1}{2},j}^\top \bm V_{i,j}, \quad \bm w_{i,j}^N \coloneqq  (\bm T_G)_{i,j+\frac{1}{2}}^\top \bm V_{i,j},\quad w_{i,j}^S \coloneqq  (\bm T_G)_{i,j-\frac{1}{2}}^\top \bm V_{i,j},
\end{equation}
where the matrices $\bm T_F, \bm T_G$ are defined in \eqref{def_T_F}. 
Next, we perform a second-order total variation-diminishing (TVD) reconstruction of the scaled variable $\bm w$ at the cell interfaces
\begin{equation}\label{ES2_w_tilde}
    \begin{aligned}
       \widetilde{\bm w}_{i,j}^E &\coloneqq \bm w_{i,j}^E + \frac{1}{2}\phi ( \bm \theta_{i,j}^E ) \circ \llangle \bm w \rrangle_{i+\frac{1}{2},j}, & \widetilde{\bm w}_{i,j}^W &\coloneqq \bm w_{i,j}^W - \frac{1}{2}\phi ( \bm \theta_{i,j}^W ) \circ \llangle \bm w \rrangle_{i-\frac{1}{2},j},\\
            \widetilde{\bm w}_{i,j}^N &\coloneqq \bm w_{i,j}^N + \frac{1}{2}\phi ( \bm \theta_{i,j}^N ) \circ \llangle \bm w \rrangle_{i,j+\frac{1}{2}}, & \widetilde{\bm w}_{i,j}^S &\coloneqq \bm w_{i,j}^S - \frac{1}{2}\phi ( \bm \theta_{i,j}^S ) \circ \llangle \bm w \rrangle_{i,j-\frac{1}{2}},
\end{aligned}
\end{equation}
where the jump $\llangle \cdot \rrangle$ is defined in \eqref{jump_V}, $\circ$ denotes the Hadamard (elementwise) product on vectors, and $\bm \theta_{i,j}$ represents the difference quotients, defined by
\begin{align}\label{def_theta}
        \bm \theta_{i,j}^E &\coloneqq \llangle \bm w\rrangle_{i-\frac{1}{2},j} \oslash \llangle \bm w \rrangle_{i+\frac{1}{2},j}, &  \bm \theta_{i,j}^W &\coloneqq \llangle \bm w\rrangle_{i+\frac{1}{2},j} \oslash \llangle \bm w \rrangle_{i-\frac{1}{2},j},\\
            \bm \theta_{i,j}^N &\coloneqq \llangle \bm w\rrangle_{i,j-\frac{1}{2}} \oslash \llangle \bm w \rrangle_{i,j+\frac{1}{2}}, &  \bm \theta_{i,j}^S &\coloneqq \llangle \bm w\rrangle_{i,j+\frac{1}{2}} \oslash \llangle \bm w \rrangle_{i,j-\frac{1}{2}},
\end{align}
where $\oslash$ is the Hadamard (elementwise) division between vectors. To preserve the TVD property, as discussed in \cite{dai2024energy,fjordholm2012arbitrarily}, we select the function $\phi$ as the minmod limiter, defined by
\begin{equation}
    \phi(\theta) = \left\{
\begin{aligned}
& 0,&  & \theta < 0, \\
  & \theta ,&  &0\le \theta\le 1,\\
  & 1,&  &\text{otherwise,}
\end{aligned}
\right.
\end{equation}
which operates elementwise on vector inputs. Finally, the desired reconstructions for $\bm V_{i,j}^E,\bm V_{i,j}^W,\bm V_{i,j}^N,\bm V_{i,j}^S$ can be computed by inverting the $\bm w\text{-to-}\bm V$ map, respectively,
\begin{equation}\label{ES2_def_jump_V}
    (\bm T_F)_{i+\frac{1}{2},j}^\top \bm V_{i,j}^E \coloneqq  \widetilde{\bm w}_{i,j}^E  ,\quad (\bm T_F)_{i-\frac{1}{2},j}^\top \bm V_{i,j}^W \coloneqq  \widetilde{\bm w}_{i,j}^W, \quad (\bm T_G)_{i,j+\frac{1}{2}}^\top \bm V_{i,j}^N \coloneqq  \widetilde{\bm w}_{i,j}^N  \quad (\bm T_G)_{i,j-\frac{1}{2}}^\top \bm V_{i,j}^S \coloneqq  \widetilde{\bm w}_{i,j}^S.
\end{equation}
The final ES2 scheme defined in \eqref{ES2_flux} satisfies the desired properties.

\begin{theorem}[ES2 scheme]\label{ES2_thm} The FV scheme \eqref{FV-SGSWE}, with the source term in \eqref{EC_flux} and the diffusive numerical fluxes $\mathcal{F}^{ES2}_{i+\frac{1}{2},j}, \mathcal{G}^{ES2}_{i,j+\frac{1}{2}}$ defined in \eqref{ES2_flux}, is a second-order, well-balanced, and energy stable scheme.
\end{theorem}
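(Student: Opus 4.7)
The plan is to verify the three claims of \Cref{ES2_thm}—second-order accuracy, well-balancedness, and energy stability—in that order, reusing the EC and ES1 machinery as much as possible.

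For second-order accuracy, I would note that by \Cref{lemma_LTE_EC} the EC portion already gives an $\mathcal{O}(\Delta x^2 + \Delta y^2)$ local truncation error, so it suffices to show the ES2 diffusion perturbation does not degrade this. Because the minmod-limited reconstruction $\widetilde{\bm V}_{i,j}$ is piecewise linear and exact on affine functions, in smooth regions the interface jumps $\llangle \bm V\rrangle$ are themselves of size $\mathcal{O}(\Delta x^2 + \Delta y^2)$, so after division by $\Delta x$ or $\Delta y$ in \eqref{FV-SGSWE} the added correction remains second-order small.

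For well-balancedness, I would substitute a stochastic lake-at-rest state (\Cref{def:wb}) into \eqref{entropy_quantities_discrete} and observe that $\bm V_{i,j} = (g\widehat{C}^\top, \bm 0^\top, \bm 0^\top)^\top$ is cell-independent. The scaled variables $\bm w^{E,W,N,S}$ from \eqref{ES2_def_w} are then also cell-constant, the raw jumps $\llangle \bm w\rrangle$ vanish, the minmod limiter forces $\widetilde{\bm w}=\bm w$, and so $\llangle \bm V\rrangle \equiv \bm 0$ at every interface. The ES2 fluxes collapse to the EC fluxes, which are well-balanced by \Cref{lemma_wellbalanced}.

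The main obstacle is the energy-stability claim. Following the ES1 template, I would multiply \eqref{FV-SGSWE} by $\bm V_{i,j}^\top$, invoke \Cref{lemma_sufficientcond_EC} to telescope the EC contribution into $\mathcal{H}^{EC}, \mathcal{K}^{EC}$ divergences, and use the identity \eqref{identity_ave_jump} to split the diffusive remainder into an $\overline{\bm V}^\top \bm Q\llangle \bm V\rrangle/2$ piece—absorbed into ES2 energy fluxes $\mathcal{H}^{ES2}, \mathcal{K}^{ES2}$ defined analogously to \eqref{energy_flux_ES1}—plus residual terms proportional to $\llbracket \bm V\rrbracket^\top \bm Q^{ES,F}\llangle \bm V\rrangle$ and its $\bm G$-direction analogue. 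Unlike the ES1 case, positive semi-definiteness of $\bm Q^{ES,F}$ alone no longer guarantees these residuals have the right sign. I would close this gap via the eigendecomposition $\bm Q^{ES,F} = \bm T_F|\bm \Lambda_F|\bm T_F^\top$ and the scaled-variable relations \eqref{ES2_def_w}--\eqref{ES2_def_jump_V}, which convert the residual into $\sum_m |\lambda_m|\,\llangle \bm w\rrangle_{i+\frac{1}{2},j,m}\,\bigl(\widetilde{\bm w}^W_{i+1,j,m} - \widetilde{\bm w}^E_{i,j,m}\bigr)$. A direct substitution of \eqref{ES2_w_tilde} shows this bracketed factor equals $\llangle \bm w\rrangle_{i+\frac{1}{2},j,m}\bigl(1 - \tfrac{1}{2}\phi(\theta^W_{i+1,j,m}) - \tfrac{1}{2}\phi(\theta^E_{i,j,m})\bigr)$ with a non-negative multiplier, since $0\le\phi\le 1$ for the minmod limiter. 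Hence each summand is non-negative, the residual is non-positive, and \eqref{ES_def} follows; the $\bm G$-direction residuals are handled identically.
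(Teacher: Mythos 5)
Your proposal is correct and follows essentially the same path as the paper: second-order accuracy from the second-order reconstructed jumps, well-balancedness from the vanishing of $\llangle \bm V \rrangle$ at lake-at-rest, and energy stability from the componentwise identity $\llangle \widetilde{\bm w} \rrangle = \bigl(1-\tfrac{1}{2}\phi(\bm\theta^W)-\tfrac{1}{2}\phi(\bm\theta^E)\bigr)\circ\llangle \bm w\rrangle$ combined with $\bm Q^{ES}=\bm T|\bm\Lambda|\bm T^\top$ and $0\le\phi\le 1$. The only difference is presentational: the paper funnels the sign argument through the quoted \cref{lemma_ES2} (verifying its hypothesis \eqref{ES2_proof} with the diagonal matrices $\bm\Pi$), whereas you inline the proof of that lemma by carrying out the $\bm V_{i,j}^\top$-multiplication and sign bookkeeping directly.
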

The second-order accuracy results from the fact that the jumps $  \llangle \bm V \rrangle_{i+\frac{1}{2},j}$ and $\llangle \bm V \rrangle_{i,j+\frac{1}{2}}$ for each $i,j$ are computed by using second-order accurate reconstructions. The well-balanced property can be shown in the same manner as in the ES1 scheme, following from the definition of $\bm V_{i,j}$ and the assumption that the stochastic lake-at-rest initial data implies that the jumps are zero. To show the energy stability property, we refer to the following lemma from \cite{fjordholm2012arbitrarily} adapted to the two-dimensional case as stated below.

\begin{lemma}[\cite{fjordholm2012arbitrarily}, Lemma 3.2]\label{lemma_ES2} For each $i,j$, if there exists positive diagonal matrices $\bm \Pi_{i+\frac{1}{2},j}\ge 0 $ and $ \bm \Pi_{i,j+\frac{1}{2}} \ge 0$, s.t., the second-order jumps satisfy,
\begin{equation}\label{ES2_proof}
\begin{split}
        & \llangle \bm V \rrangle_{i+\frac{1}{2},j} =\left( (\bm T_F)^\top_{i+\frac{1}{2},j}\right)^{-1} \bm \Pi_{i+\frac{1}{2},j}  (\bm T_F)_{i+\frac{1}{2},j}^\top \llbracket \bm V \rrbracket_{i+\frac{1}{2},j},\\
    & \llangle \bm V \rrangle_{i,j+\frac{1}{2}} =\left( (\bm T_G)^\top_{i,j+\frac{1}{2}}\right)^{-1} \bm \Pi_{i,j+\frac{1}{2}}  (\bm T_G)_{i,j+\frac{1}{2}}^\top \llbracket \bm V \rrbracket_{i,j+\frac{1}{2}},
\end{split}
\end{equation}
 then the scheme \eqref{FV-SGSWE} with fluxes $\mathcal{F}_{i+\frac{1}{2},j} = \mathcal{F}_{i+\frac{1}{2},j}^{ES2} $ and $ 
  \mathcal{G}_{i,j+\frac{1}{2}} = \mathcal{G}_{i,j+\frac{1}{2}}^{ES2}$ is an ES scheme.
\end{lemma}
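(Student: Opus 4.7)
The plan is to mirror the energy-identity derivation used for the EC and ES1 schemes, and then reduce the proof to the algebraic observation that the hypothesis \eqref{ES2_proof} forces a certain quadratic form in $\llbracket \bm V\rrbracket$ to be nonnegative. Concretely, I would start from the semi-discrete scheme \eqref{FV-SGSWE} written with the ES2 fluxes \eqref{ES2_flux}, take the inner product with $\bm V_{i,j}^\top$, and repeat the calculation that proved \cref{lemma_sufficientcond_EC}. Because $\mathcal{F}^{ES2}_{i+1/2,j} = \mathcal{F}^{EC}_{i+1/2,j} - \tfrac12 \bm Q^{ES,F}_{i+1/2,j}\llangle \bm V\rrangle_{i+1/2,j}$ (and similarly for $\mathcal{G}^{ES2}$), the calculation splits into an EC part, which is handled by \cref{theorem_EC} and gives the EC energy fluxes $\mathcal{H}^{EC},\mathcal{K}^{EC}$ and a source-term contribution that vanishes in the energy identity, and a diffusive correction that produces exactly the ES2 energy fluxes defined in analogy with \eqref{energy_flux_ES1} but with $\llangle \bm V\rrangle$ in place of $\llbracket \bm V\rrbracket$.

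After this bookkeeping, I expect to obtain an identity of the form
\begin{equation*}
\frac{d}{dt}\bm E_{i,j}(t) = -\frac{\mathcal{H}^{ES2}_{i+\frac12,j}-\mathcal{H}^{ES2}_{i-\frac12,j}}{\Delta x} -\frac{\mathcal{K}^{ES2}_{i,j+\frac12}-\mathcal{K}^{ES2}_{i,j-\frac12}}{\Delta y} - R^x_{i,j} - R^y_{i,j},
\end{equation*}
where the residual terms $R^x_{i,j}$ and $R^y_{i,j}$ are averages of quadratic forms of the type $\tfrac{1}{4\Delta x}\llbracket \bm V\rrbracket_{i\pm\frac12,j}^\top \bm Q^{ES,F}_{i\pm\frac12,j}\llangle \bm V\rrangle_{i\pm\frac12,j}$ and the analogous $y$-direction quantities. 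To establish \eqref{ES_def}, it then suffices to show each such quadratic form is nonnegative.

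The key algebraic step, which is the heart of the proof, is to substitute the hypothesis \eqref{ES2_proof} into these quadratic forms. Using $\bm Q^{ES,F}_{i+\frac12,j} = \bm T_F|\bm\Lambda_F|\bm T_F^\top$ together with \eqref{ES2_proof}, I get
\begin{equation*}
\llbracket \bm V\rrbracket^\top_{i+\frac12,j}\,\bm Q^{ES,F}_{i+\frac12,j}\,\llangle \bm V\rrangle_{i+\frac12,j}
= \llbracket \bm V\rrbracket^\top_{i+\frac12,j}\,\bm T_F\,|\bm\Lambda_F|\,\bm \Pi_{i+\frac12,j}\,\bm T_F^\top\,\llbracket \bm V\rrbracket_{i+\frac12,j},
\end{equation*}
after the cancellation $\bm T_F^\top (\bm T_F^\top)^{-1} = I$. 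Setting $\bm y := \bm T_F^\top \llbracket \bm V\rrbracket_{i+\frac12,j}$, this becomes $\bm y^\top |\bm\Lambda_F|\bm \Pi_{i+\frac12,j}\bm y = \sum_k |\lambda_k|(\Pi_{i+\frac12,j})_{kk}\,y_k^2 \ge 0$, since $|\bm\Lambda_F|$ and $\bm \Pi_{i+\frac12,j}$ are diagonal with nonnegative entries and hence commute. The $y$-direction term is handled identically with $\bm T_G,\bm\Lambda_G,\bm \Pi_{i,j+\frac12}$. Summing over the four neighbors of cell $(i,j)$ shows $R^x_{i,j}+R^y_{i,j}\ge 0$, yielding \eqref{ES_def}.

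The main obstacle I anticipate is not the final nonnegativity argument, which is routine once the substitution is performed, but rather the careful bookkeeping that produces precisely the ES2 energy fluxes in the identity above. In the 2D stochastic setting there are more cross terms (bottom-topography coupling through $\bm S_{i,j}$, and the $x$- and $y$-direction fluxes must be combined simultaneously), so reusing the calculation of \cref{lemma_sufficientcond_EC} twice, once per direction, and tracking the $-\tfrac12 \overline{\bm V}^\top \bm Q^{ES}\llangle\bm V\rrangle$ correction into the definitions of $\mathcal{H}^{ES2},\mathcal{K}^{ES2}$ (as was done for $\mathcal{H}^{ES1},\mathcal{K}^{ES1}$ in \eqref{energy_flux_ES1}) is where care is needed.
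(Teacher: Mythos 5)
Your proposal is correct and follows essentially the same route the paper indicates: multiply the scheme by $\bm V_{i,j}^\top$, repeat the bookkeeping of \cref{lemma_sufficientcond_EC} (as in the ES1 energy estimate) to isolate the diffusive residuals, and use the hypothesis \eqref{ES2_proof} with $\bm Q^{ES}=\bm T|\bm\Lambda|\bm T^\top$ so that each residual becomes $\bm y^\top|\bm\Lambda|\bm\Pi\,\bm y\ge 0$ with $\bm y=\bm T^\top\llbracket\bm V\rrbracket$. The paper omits the details but sketches exactly this argument, so your write-up simply supplies the substitution and sign check it leaves to the reader.
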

The proof of this lemma, which we omit, is accomplished by multiplying the FV scheme \eqref{FV-SGSWE} by $\bm V_{i,j}^\top$ and following a similar approach as it in the proof of Lemma \ref{lemma_sufficientcond_EC}. We turn to complete the proof of \Cref{ES2_thm}.
\begin{proof}[Proof of Theorem \ref{ES2_thm}]
By \Cref{lemma_ES2}, it suffices to establish the equalities \eqref{ES2_proof} to demonstrate the ES property of the ES2 scheme. The definition of \eqref{ES2_w_tilde} implies
\begin{equation}
\begin{split}
    & \llangle \widetilde{\bm w} \rrangle_{i+\frac{1}{2},j}^l = \left( 1 - \frac{1}{2}\phi \bigl( (\bm \theta_{i+1,j}^{W})^l \bigr)   - \frac{1}{2} \bigl( (\bm \theta_{i,j}^{E})^l \bigr) \right) \llangle \bm w\rrangle_{i+\frac{1}{2},j}^l,\\
    & \llangle \widetilde{\bm w} \rrangle_{i,j+\frac{1}{2}}^l = \left( 1 - \frac{1}{2}\phi \bigl( (\bm \theta_{i,j+1}^{S})^l \bigr)   - \frac{1}{2} \bigl( (\bm \theta_{i,j}^{N})^l \bigr) \right) \llangle \bm w\rrangle_{i,j+\frac{1}{2}}^l,
\end{split}
\end{equation}
where $l$ denotes the index of the components of the corresponding vectors. The equality above, together with the definitions in \eqref{ES2_def_w}, \eqref{ES2_def_jump_V}, and the linearity of $\llangle \cdot \rrangle $ and $ \llbracket \cdot \rrbracket$, implies \eqref{ES2_proof} with the diagonal matrices $\bm \Pi_{i+\frac{1}{2},j}$ and $\bm \Pi_{i,j+\frac{1}{2}}$ defined by
\begin{align}
    (\bm \Pi_{i+\frac{1}{2},j})_{l,l} &=  1 - \frac{1}{2}\phi \bigl( (\bm \theta_{i+1,j}^{W})^l \bigr)   - \frac{1}{2} \bigl( (\bm \theta_{i,j}^{E})^l \bigr), &(\bm \Pi_{i,j+\frac{1}{2}})_{l,l} &=  1 - \frac{1}{2}\phi \bigl( (\bm \theta_{i,j+1}^{S})^l \bigr)   - \frac{1}{2} \bigl( (\bm \theta_{i,j}^{N})^l \bigr).
\end{align}

Note that the diagonal matrices $\bm \Pi_{i+\frac{1}{2},j} $ and $ \bm \Pi_{i,j+\frac{1}{2}} $ are positive semi-definite, since the minmod limiter satisfies $0 \le \phi(\theta) \le 1$. Consequently, the ES2 scheme is energy stable, completing the proof.
\end{proof}

\section{Algorithmic details and pseudocode}\label{Sec4_5}
We have presented energy conservative and energy stable schemes (EC, ES1, ES2) for the semi-discrete form \eqref{FV-SGSWE}, including the numerical construction and theoretical results. We now discuss details of the corresponding fully discrete algorithms. Complete algorithmic pseudocode is given in \cref{alg:cap}.

\subsection{Desingularization}
In the construction of numerical fluxes, the computation of $\bm u_{i,j}, \bm v_{i,j}$ requires $\big(\mathcal{P}(\bm h_{i,j})\big)^{-1}$, which is valid when $\mathcal{P}(\bm h_{i,j})$ is positive-definite. However, this matrix could be ill-conditioned in some situations. To mitigate error from ill-conditioned operations, we employ a desingularization procedure introduced in \cite{kurganov2007second}, whose stochastic variant was introduced in \cite{dai2021hyperbolicity,dai2022hyperbolicity,dai2024energy}.
Suppose $\mathcal{P}(\bm h_{i,j})$ has the eigenvalue decomposition,
\begin{equation}
    \mathcal{P}(\bm h_{i,j}) = \bm Q \bm \Pi \bm Q ^\top,
\end{equation}
where $\Pi = \text{diag}(\pi_1,\dots,\pi_K)$ is a diagonal positive matrix. Then the desingularization procedure approximates $\big(\mathcal{P}(\bm h_{i,j})\big)^{-1} \bm q_{i,j}$ by perturbing the eigenvalues $\pi_k$ to regularize the velocities:
\begin{align}
        \bm u_{i,j} &= \bm Q \widetilde{\bm \Pi}^{-1} \bm Q^\top \bm q_{i,j}^x, & \widetilde{\bm \Pi} &= \text{diag}(\widetilde{\pi}_1,\dots,\widetilde{\pi}_K), & \widetilde{\pi}_i &= \frac{\sqrt{\pi_i^4 + \max\{\pi_i^4,\epsilon^4\}}}{\sqrt{2}\pi_i},\\
         \bm v_{i,j} &= \bm Q \widetilde{\bm \Pi}^{-1} \bm Q^\top \bm q_{i,j}^y, &\widetilde{\bm \Pi} &= \text{diag}(\widetilde{\pi}_1,\dots,\widetilde{\pi}_K), & \widetilde{\pi}_i &= \frac{\sqrt{\pi_i^4 + \max\{\pi_i^4,\epsilon^4\}}}{\sqrt{2}\pi_i},
\end{align}
where $\epsilon$ is a small positive constant. To ensure the consistency of the scheme, the discharge variables must be recomputed when the desingularization procedure is activated,
\begin{align}
    \bm q_{i,j}^x &\xleftarrow{} \mathcal{P}(\bm h_{i,j})\bm u_{i,j}, & \bm q_{i,j}^y &\xleftarrow{} \mathcal{P}(\bm h_{i,j})\bm v_{i,j}.
\end{align}

\subsection{Hyperbolicity-preserving criterion}
The hyperbolicity and existence of the entropy flux pair require that $\mathcal{P}(\widehat{h})$ is positive-definite, i.e., $\mathcal{P}(\widehat{h})>0$. This corresponds to the condition $\mathcal{P}(\bm h_{i,j})>0$ holding for each cell $\mathcal{I}_{i,j}$. While this may be true at the current time, extra conditions must be imposed that this is true at the next time step. A sufficient condition, \cite[Theorem 3.2]{dai2021hyperbolicity}, is the following condition for every $n \in [N]$:
\begin{align}\label{positive_criterion}
    \widehat{h}_{i,j}(\xi_n)& \coloneqq \sum_{k=1}^K  (\bm h_{i,j})_k \phi_k(\xi_n) > 0, & \bm h_{i,j} &= \big( (\bm h_{i,j})_1,\dots,(\bm h_{i,j})_K \big)^\top,
\end{align}
where $\{ \xi_n\}_{n=1}^N$ is a nodal set in $\mathbb{R}^{d}$ for a positive-weight quadrature rule with sufficient accuracy relative to the $\xi$-polynomial space $P = \text{span}(\phi_1,\dots,\phi_K)$. We enforce \eqref{positive_criterion} by restricting the time step size in the time evolution procedure, as is done in \cite[Lemma 4.2]{dai2021hyperbolicity}. Suppose \eqref{positive_criterion} is satisfied at the current time level, and the forward Euler method is applied. To enforce \eqref{positive_criterion} in the next time level, we require
\begin{equation}\label{positive_check_dt}
  \Delta t < \lambda \coloneqq \min_{i,j} \min_{n \in [N]} \left| \frac{\widehat{h}_{i,j}(\xi_n)}{ \frac{\widehat{\mathcal{F}}^h_{i+\frac{1}{2},j}(\xi_n) - \widehat{\mathcal{F}}^h_{i-\frac{1}{2},j}(\xi_n)}{\Delta x} + \frac{\widehat{\mathcal{G}}^h_{i,j+\frac{1}{2}}(\xi_n) - \widehat{\mathcal{G}}^h_{i,j-\frac{1}{2}}(\xi_n)}{\Delta y} } \right|,
\end{equation}
where 
\begin{align}
      \widehat{\mathcal{F}}^h_{i+\frac{1}{2},j}(\xi)& \coloneqq \sum_{k=1}^K \big(\mathcal{F}^h_{i+\frac{1}{2},j}\big)_k \phi_k(\xi), & \mathcal{F}_{i+\frac{1}{2},j} &= \big( (\mathcal{F}^h_{i+\frac{1}{2},j})^\top, (\mathcal{F}^{q^x}_{i+\frac{1}{2},j})^\top, (\mathcal{F}^{q^y}_{i+\frac{1}{2},j})^\top \big)^\top \in \mathbb{R}^{3K} \\
    \widehat{\mathcal{G}}^h_{i,j+\frac{1}{2}}(\xi) &\coloneqq \sum_{k=1}^K \big(\mathcal{G}^h_{i,j+\frac{1}{2}}\big)_k \phi_k(\xi), & \mathcal{G}_{i,j+\frac{1}{2}} &= \big( (\mathcal{G}^h_{i,j+\frac{1}{2}})^\top, (\mathcal{G}^{q^x}_{i,j+\frac{1}{2}})^\top, (\mathcal{G}^{q^y}_{i,j+\frac{1}{2}})^\top \big)^\top \in \mathbb{R}^{3K}.
\end{align}
In addition to enforcing a global time step restriction across all cells for hyperbolicity, we must also ensure that 
$\Delta t$ satisfies the wave speed CFL condition to account for the hyperbolic nature of the system, cf. \cite[Equation (4.15)]{dai2021hyperbolicity}.

\subsection{Adaptive time step size}\label{adaptive_time_procedure}
The time step restriction for the hyperbolicity-preserving procedure \eqref{positive_check_dt} is derived by assuming a forward Euler time integration method. We can extend this to higher-order strong stability-preserving Runge-Kutta (SSP-RK) methods, which are convex combinations of forward Euler methods with multiple intermediate stages \cite{doi:10.1137/S003614450036757X}. We employ the adaptive time-stepping procedure in \cite{chertock2015well-adaptiveRK}, which allows Forward Euler time step restrictions to be applied to SSP-RK procedures. Computationally, the procedure computes updates of the time step restriction at each intermediate stage of the RK process.

\subsection{Numerical imposition of boundary conditions}
On a rectangular computational domain $[a,b]\times [c,d]$, let $Q_{i,j}$, $i, j \in [M]$, denote the (numerical) cell averages of any physical quantity on the mesh. Periodic boundary conditions are straightforward to implement, 
\begin{align*}
  Q_{i,0} &= Q_{i,M}, & Q_{i,1} &= Q_{i,M+1}, & Q_{0,j} &= Q_{M,j}, & Q_{1,j} &= Q_{M+1,j}, &\forall\; i,j \in [M]
\end{align*}
Periodic boundary conditions can be unphysical, and one way to mitigate finite-domain effects is to impose ``outflow'' boundary conditions that emulate physical continuation of the domain beyond the computational grid. 
A simple strategy for outflow boundary condition is to set ghost cell values through extrapolation from interior cells. The straightforward zeroth-order extrapolation uses a constant function for the extrapolation, i.e., sets as boundary conditions,
\begin{align}\label{eq:bc-outflow}
  Q_{i,0}&=Q_{i,1}, &Q_{i,M+1} &= Q_{i,M},& Q_{0,j}&=Q_{1,j},& Q_{M+1,j} &= Q_{M,j},& \forall\; i,j \in [M]
\end{align}
Higher-order extrapolation can improve accuracy, but zeroth-order extrapolation is often preferred for stability \cite{leveque2002finite}. 

\subsection{Measuring energy change through augmented energy}\label{bdry_effect}
\Cref{def_ECES} that labels schemes as energy conservative and energy stable relies on such properties holding in the \textit{interior} of the computational domain, but does not guarantee energy conservation/stability if boundary fluxes work to increase energy inside the domain. In such cases, energy will increase through no fault of the scheme but instead as a natural physical evolution of the model. Such situations can occur even for standard procedures that implement outflow boundary conditions. We discuss this in more detail below.

Under periodic boundary conditions, summing the equality/inequality in the definition of the EC or ES scheme yields,
\begin{align}
  \sum_{i,j \in [M]} \frac{d}{dt}\bm E_{i,j}(t) &= 0, &\text{ or}, & &\sum_{i,j \in [M]}\frac{d}{dt}\bm E_{i,j}(t) &\le 0,
\end{align}
respectively. 
With outflow boundary conditions, the summation of the semi-discrete formula differs from the periodic case. For example, in a rectangular computational domain with outflow boundary conditions on all four sides, summing the energy change across all cells to enforce energy conservation or stability corresponds to the respective assertions,
\begin{subequations}\label{Energychange_outflow}
  \begin{align}
      \sum_{i,j \in [M]} \frac{d}{dt}\bm E_{i,j}(t) &= \frac{1}{\Delta x} \sum_{j \in [M]} (\mathcal{H}_{\frac{1}{2},j} - \mathcal{H}_{M+\frac{1}{2},j} )+ \frac{1}{\Delta y} \sum_{i \in [M]}(\mathcal{K}_{i,\frac{1}{2}} - \mathcal{K}_{i,M+\frac{1}{2}}), & \textrm{(EC)} \\
    \sum_{i,j \in [M]} \frac{d}{dt}\bm E_{i,j}(t) &\le \frac{1}{\Delta x} \sum_{j \in [M]} (\mathcal{H}_{\frac{1}{2},j} - \mathcal{H}_{M+\frac{1}{2},j} )+ \frac{1}{\Delta y} \sum_{i \in [M]}(\mathcal{K}_{i,\frac{1}{2}} - \mathcal{K}_{i,M+\frac{1}{2}}), & \textrm{(ES)}
  \end{align}
\end{subequations}
From these expressions, we see that since the boundary fluxes are imposed by a relatively arbitrary problem setup (outflow boundary conditions, Dirichlet boundary conditions, etc.), then we cannot expect the above equality or inequality to result in non-increasing energy, regardless of the scheme developed. This is a concern that arises in practice: Consider the SWE SG system in one spatial dimension (formally, from the two-dimensional formulation one can set all $y$-derivatives to 0 and make all quantities $y$-independent), 
with the following piecewise linear bottom topography $B$, water velocity $u$, and water surface $w$:
\begin{align}\label{eq:augenergy-example}
  B(x) &= \left\{
\begin{aligned}
& 0,& & x<0.5, \\
  & 0.1x - 0.05 ,& &0.5 \le x \le 1.5,\\
  & 0.1,& &\text{otherwise,}
\end{aligned}
  \right. & 
u(x,0)& = 0.3, & w(x,0) &= 1,
\end{align}
over the computational domain is $[0,2]$. We impose the one-dimensional version of the outflow boundary conditions \eqref{eq:bc-outflow} on both sides of the domain, and we integrate up to terminal time $T=0.07$. 
\begin{figure}[htbp]
    \centering  \includegraphics[width=1\textwidth]{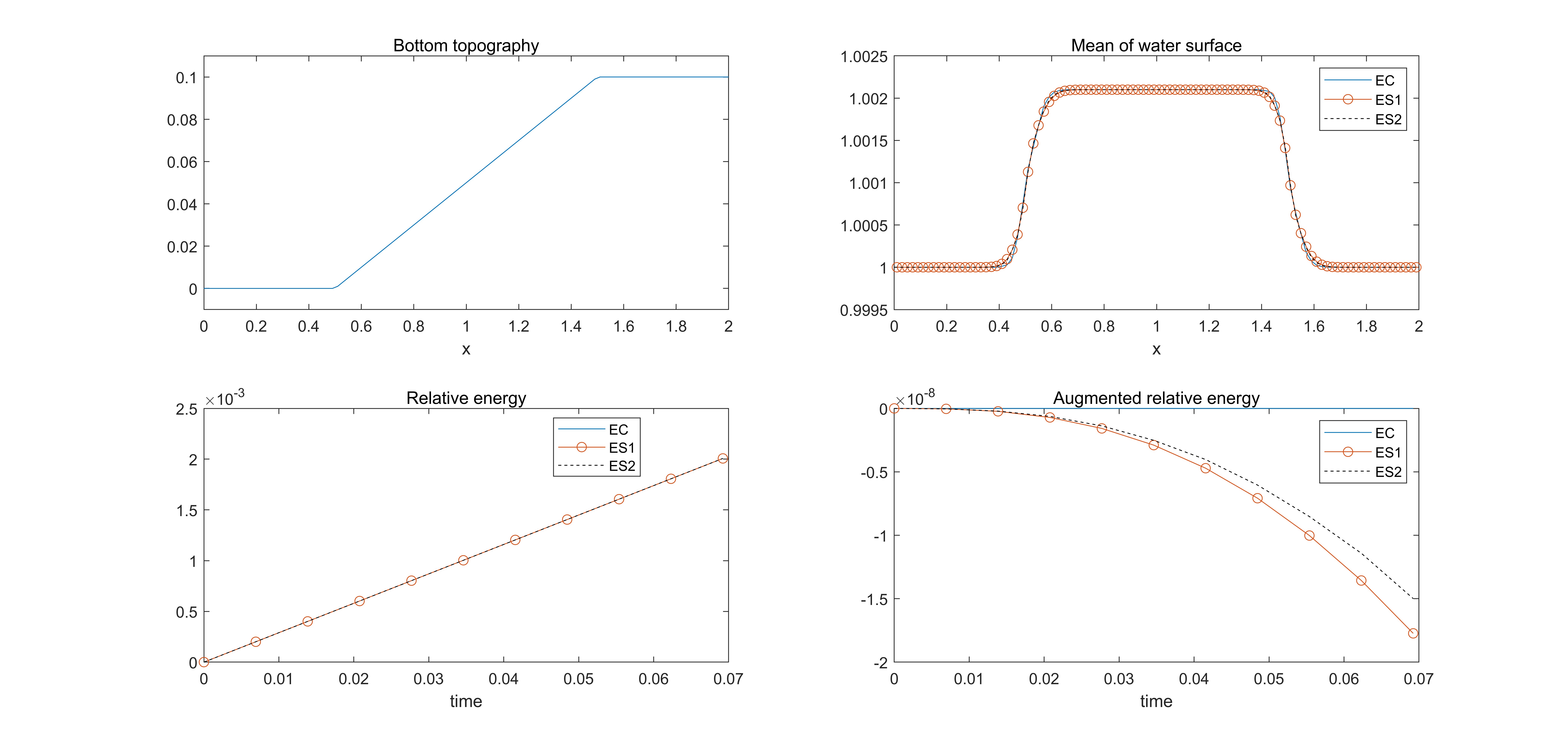}
    \centering
    \captionsetup{font={scriptsize},justification=raggedright}
    \caption{Illustration of energy versus augmented energy that incorporates energy fluxes at the boundary. Top left: Bottom topography $B$ corresponding to \eqref{eq:augenergy-example}. Top right: Mean of the water surface $w = h + B$ at terminal time $T = 0.07$. Bottom left: The standard relative energy $\frac{\bm{E}(t) - \bm{E}(0)}{\bm{E}(0)}$ increases in time for any accurate scheme due to influx of energy at the boundaries. Bottom right: Measuring the relative change in augmented energy $\frac{\widetilde{\bm{E}}(t) - \widetilde{\bm{E}}(0)}{\widetilde{\bm{E}}(0)}$ that offsets energy change due to boundary effects restores expected behavior for relative change of energy when using EC and ES schemes.}
    \label{fig_bdryeffect}
\end{figure}
The results are shown in Figure \ref{fig_bdryeffect}, and it is evident that the relative energy of the three schemes (EC, ES1, ES2) is increasing, see the bottom-left plot. I.e., this increase in relative energy is caused by the fact that this setup results in the total energy of the system increasing due to the outflow boundary conditions. 

This increase in energy for the true solution makes it difficult to evaluate the effectiveness of the EC and ES schemes. An alternative measure of energy is the \textit{augmented energy}, described below, which accounts for boundary effects on the total energy of the system. If $H_1$ and $H_M$ denote the entropy flux values at the one-dimensional boundary values, then we define the augmented energy $\widetilde{\bm{E}}$ over the computational domain to evolve as,
\begin{align}\label{derivative_augenergy_1D}
  \frac{d}{dt} \widetilde{\bm{E}}(t) &\coloneqq 
  \sum_{i=1}^M \frac{d}{dt}\bm {E}_{i}(t) -\frac{1}{\Delta x}(H_{1}-H_M), & 
  \widetilde{\bm{E}}(0) &= \sum_{i=1}^M \bm{E}_i(0).
\end{align}
I.e., the augmented energy evolves according to the sum over all cells of the energy derivative, along with the energy change associated with the entropy boundary fluxes. The equation \eqref{derivative_augenergy_1D} can be time integrated since the right-hand side can be evaluated. By evaluating the relative augmented energy for the setup \eqref{eq:augenergy-example}, shown in the bottom right plot of \Cref{fig_bdryeffect}, we observe that the EC scheme exhibits no (augmented) energy change, as expected, and the ES1 and ES2 schemes exhibit small energy decrease, with ES2 less dissipative than ES1, again as expected.

In this paper, for two-dimensional numerical examples, we generally implement outflow boundary conditions using zero-order extrapolation on the left and right boundaries and periodic boundary conditions on the upper and lower boundaries, resulting in the derivative of the augmented energy
Our two-dimensional numerical examples in the next section largely use periodic boundary conditions on the upper and lower boundaries and zeroth-order outflow boundary conditions on the left and right boundaries, corresponding to the augmented energy,
\begin{equation}\label{derivative_augenergy}
  \frac{d}{dt} \widetilde{\bm{E}}(t) 
  \coloneqq \sum_{i,j=1}^M \frac{d}{dt}\bm {E}_{i,j}(t) -\sum_{j=1}^M\frac{1}{\Delta x}(H_{1,j}-H_{M,j}).
\end{equation}


\begin{algorithm}[tbp]
\caption{Pseudocode of the fully discrete EC/ES schemes for the 2D SG SWE}\label{alg:cap}
\textbf{Input:} Scheme type: $scheme = $ EC, ES1, or ES2.\\
\textbf{Input:} Bottom topography $B$, initial data $U$ at $t=0$, terminal time $T$.\\
\textbf{Initialize:} Get PCE coefficients $\bm U_{i,j}$ with given polynomial index set $\Lambda$, set $t=0$.
\begin{algorithmic}
\While{$t<T$}
\State Compute $\bm B_{i,j}$ from $B$ for all $i,j$.
\State Compute $\bm u_{i,j}, \bm v_{i,j}$ for all $i,j$.
\State Compute $\mathcal{F}_{i+\frac{1}{2},j}^{EC}, \mathcal{G}_{i,j+\frac{1}{2}}^{EC}$, and $\bm S_{i,j}$ for all $i,j$, by \eqref{EC_flux}.
\If{$scheme$ is EC}
    \State  $\mathcal{F}_{i+\frac{1}{2},j} \gets \mathcal{F}_{i+\frac{1}{2},j}^{EC}, \mathcal{G}_{i,j+\frac{1}{2}} \gets \mathcal{G}_{i,j+\frac{1}{2}}^{EC}$, for all $i,j$.
\Else{ for all $i,j$}
\State Compute entropy variable $\bm V_{i,j}$ by \eqref{entropy_quantities_discrete}.
\State Compute $\bm Q_{i+\frac{1}{2},j}^{ES,F}, \bm Q_{i,j+\frac{1}{2}}^{ES,G}$ for all $i,j$ by \eqref{diff_matrix_ES1_F}, \eqref{diff_matrix_ES1_G}. 
\If{$scheme$ is ES1}
    \State Compute $\mathcal{F}_{i+\frac{1}{2},j} \gets \mathcal{F}_{i+\frac{1}{2},j}^{ES1}, \mathcal{G}_{i,j+\frac{1}{2}} \gets \mathcal{G}_{i,j+\frac{1}{2}}^{ES1}$ by \eqref{ES1_flux} using $\mathcal{F}_{i+\frac{1}{2},j}^{EC}, \mathcal{G}_{i,j+\frac{1}{2}}^{EC}, \bm V_{i,j}, $
    \State and $\bm Q_{i+\frac{1}{2},j}^{ES,F}, \bm Q_{i,j+\frac{1}{2}}^{ES,G}$.
    \ElsIf{ $scheme$ is ES2}
    \State Construct $\bm V_{i,j}^E, \bm V_{i,j}^W, \bm V_{i,j}^N, \bm V_{i,j}^S$ by \eqref{ES2_def_w}, \eqref{ES2_w_tilde}, and \eqref{ES2_def_jump_V}.
    \State Compute $\mathcal{F}_{i+\frac{1}{2},j} \gets \mathcal{F}_{i+\frac{1}{2},j}^{ES2}, \mathcal{G}_{i,j+\frac{1}{2}} \gets \mathcal{G}_{i,j+\frac{1}{2}}^{ES2}$ 
    \State by \eqref{ES2_flux} using $\mathcal{F}_{i+\frac{1}{2},j}^{EC}, \mathcal{G}_{i,j+\frac{1}{2}}^{EC}, \bm V_{i,j}^E, \bm V_{i,j}^W, \bm V_{i,j}^N, \bm V_{i,j}^S, $ and $\bm Q_{i+\frac{1}{2},j}^{ES,F}, \bm Q_{i,j+\frac{1}{2}}^{ES,G}$.
\EndIf
\EndIf
\State Initialize $\lambda$ and $\Delta t$ by \eqref{positive_check_dt} and CFL condition.
\State Determine $\Delta t$ using the adaptive time step size procedure in Section \ref{adaptive_time_procedure}.
\State Use a SSP RK method with $\Delta t$ determined adaptively, updating $\bm h_{i,j}, \bm q^x_{i,j}, \bm q^y_{i,j}$.
\State $t \gets t+\Delta t$
\EndWhile
\end{algorithmic}
\end{algorithm}

\section{Numerical experiments}\label{Sec5}
We will illustrate the performance of our EC and ES schemes through various examples, investigating convergence rates, energy conservation and decay properties, the well-balanced property, and the ability to handle multivariate random variables.

Let $\xi$ be a one-dimensional random variable from the Beta parametric family, 
\begin{align}
  \xi \sim \mathrm{Beta}(\beta+1, \alpha+1) \textrm{ on $[-1,1]$ }:\; \rho(\xi) &\propto (1-\xi)^{\alpha}(1+\xi)^{\beta}, & \xi &\in [-1,1],
\end{align}
Hence, 
the parameters $\alpha, \beta>-1$ can be chosen freely and control the mass concentration at $\xi=1$ and $\xi = -1$, respectively. For this density $\rho$, the associated orthonormal polynomial basis is the family of Jacobi polynomials with parameters $(\alpha,\beta)$. In particular, $\alpha=\beta = 0$ corresponds to the uniform distribution $\mathcal{U}(-1,1)$, with Legendre polynomial basis functions. We will also consider a two-dimensional random variable $\xi = (\xi^{(1)},\xi^{(2)})$ with two independent and identically distributed one-dimensional random variables $\xi^{(1)},\xi^{(2)}$, i.e., $\xi^{(1)}, \xi^{(2)} \stackrel{\mathrm{iid}}{\sim} \mathrm{Beta}(\beta+1, \alpha+1)$. In this case the density is $\rho(\xi) \coloneqq \rho(\xi^{(1)})\rho(\xi^{(2)})$, and the orthonormal basis is obtained by tensorizing one-dimensional orthonormal polynomials.

For all two-dimensional examples, we implement outflow boundary conditions using zero-order extrapolation on the left and right boundaries and periodic boundary conditions on the upper and lower boundaries. To compare the energy conservation and decay properties of our EC and ES schemes, we will measure both the relative energy and the augmented relative energy change, with the latter introduced in \cref{bdry_effect}:
\begin{align}\label{eq:rel-energies}
  \text{relative change in (original) energy} &= \frac{\bm{E}(t)-\bm{E}(0)}{\bm{E}(0)}, \\ 
  \text{relative change in augmented energy} &= \frac{\widetilde{\bm{E}}(t) - \widetilde{\bm{E}}(0)}{\widetilde{\bm{E}}(0)},
\end{align}
where $\bm{E}(t) = \sum_{i,j} \bm E_{i,j}(t)$.


Throughout this section on numerical experiments, we assume the gravitational constant $g=1$ and generally use $K=4$ terms in the PCE procedure, except in the accuracy test and in the example involving a two-dimensional random variable. For the visualization of results, we will plot the water surface $w \coloneqq h+B$ instead of the conservative variable $h$ corresponding to the water height. We generally apply the first- and second-order energy stable schemes (ES1, ES2) in the following numerical examples and further apply the second-order energy conservative scheme (EC) in the accuracy test.

\subsection{Accuracy test for the energy conservative and the energy stable schemes}\label{Sec5_1}

We begin by examining the order of accuracy of the proposed EC and ES schemes for the two-dimensional stochastic shallow water system. This test is a stochastic modification of the test originally developed in \cite{BEKP}. The initial conditions for the water surface and velocities are deterministic and defined as follows:
\begin{align}
    w(x,y,0,\xi) &= 1, & u(x,y,0,\xi) &= 0.3, & v(x,y,0,\xi) &= 0,
\end{align}
where $w$ represents the water surface, and $u$ and $v$ are the velocities in the $x$- and $y$- directions, respectively. Consider a stochastic elliptic-shaped hump bottom
\begin{equation}
    B(x,y,\xi) = 0.5 \exp(-25(x-1)^2-50(y-0.5)^2) + 0.1(\xi + 1),
\end{equation}
where 
$\xi \sim \mathcal{U}(-1,1)$. 
The computational domain is $[0,2] \times [0,1]$. 
A reference solution is computed on a $800 \times 800$ uniform rectangular grid for each scheme. For the time evolution solver, we utilize the third-order Strong Stability-Preserving (SSP) Runge-Kutta (RK3) method \cite{doi:10.1137/S003614450036757X}. 

We illustrate the order of accuracy of our schemes using the water height $h$ by computing the error between the reference solution and the numerical test solution with the $ L^1$ norm in physical space and the $ L^2$ norm in stochastic space.
\begin{equation}
\begin{split}
    \text{Error}(h_d) &= \| h_d(x,y,t,\xi)-h_{ref}(x,y,t,\xi)\|_{L^1(\mathcal{D};L_{\rho}^2(\mathbb{R}^d))},\\
    \|h(x,y,t,\xi )\|_{L^1(\mathcal{D};L_{\rho}^2(\mathbb{R}^d))} & \coloneqq \int_{\mathcal{D}}\|h(x,y,t,\xi)\|_{L_{\rho}^2}dxdy
\end{split}
\end{equation}

The contour plots of the reference solution for the mean of water surface and its standard deviation at $t=0.07$ are shown in Figure \ref{fig_ex1}.

\begin{figure}[H]
    \centering  \centerline{\includegraphics[width=1.2\textwidth]{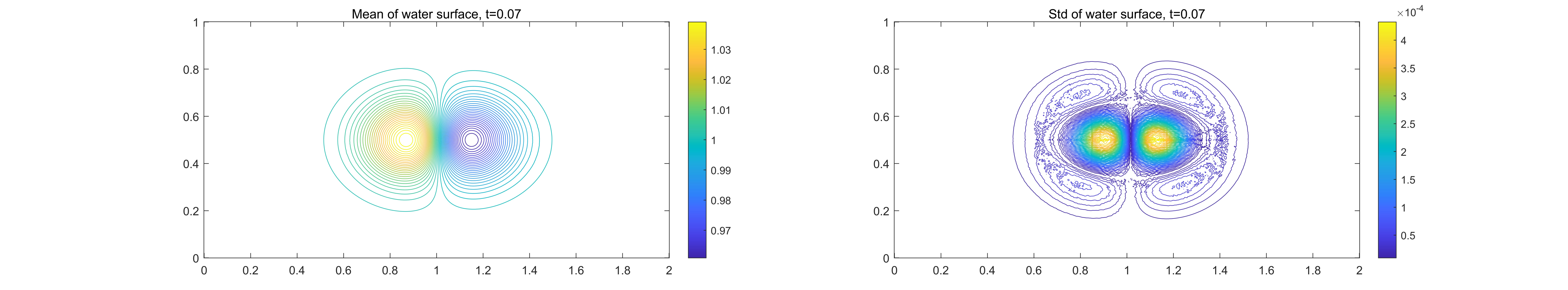}}
    \centering
    \captionsetup{font={scriptsize},justification=raggedright}
    \caption{Results for \cref{Sec5_1}: Contours for the water surface of the reference solution of ES2. Left: mean. Right: standard deviation. $T=0.07$.}
    \label{fig_ex1}
\end{figure}

We compute the orders of convergence for the numerical solutions with $K=2, T=0.07$, as summarized in Table \ref{table1}. The results indicate that the EC scheme has the smallest error and the highest order of accuracy, while the ES1 scheme has the largest error and the lowest order of accuracy. All schemes yield results consistent with theoretical expectations. We avoid using larger values of $K$ due to the occurrence of oscillations in stochastic space, which leads to instability in the higher modes of the PCE. This is a known general problem of the SG method when used for the hyperbolic conservation/balance laws problems, e.g. \cite{10.1093/imanum/drz004} and it will be part of our future research to address it.
\begin{table}[H]
    \centering
    \begin{tabular}{llllll}
    \noalign{\smallskip}
\hline
\noalign{\smallskip}
Scheme & Grid size  & Error & Order \\
\noalign{\smallskip}
\hline
\noalign{\smallskip}
\multirow{4}{4em}{ES1}& $100\times 100$
 & 2.1447e-04 &  & \\

&$200 \times 200$  &7.3671e-05 &  1.5417 &  \\

&$400 \times 400$  & 2.2557e-05&   1.7075 &  \\

\hline
\multirow{4}{4em}{ES2}& $100\times 100$
 &   1.5434e-04 &  & \\

&$200 \times 200$  &3.9852e-05 & 1.9534    &  \\

&$400 \times 400$  &1.0528e-05  &   1.9203 &  \\

\noalign{\smallskip}
\hline
\multirow{4}{4em}{EC}& $100\times 100$
 &  1.4880e-04   &  & \\

&$200 \times 200$  & 3.6890e-05&  2.0121   &  \\

&$400 \times 400$  & 8.8995e-06 & 2.0514 &  \\

\noalign{\smallskip}
\hline
\end{tabular}
  \caption{Accuracy test of \cref{Sec5_1}: Order of convergence for $K=2, T = 0.07$. Reference solution computed with grid size $800 \times 800$.}
\label{table1}
\end{table}

We investigated the EC scheme in this accuracy test because it produces smooth solutions for short time evolution up to $T = 0.07$. However, in the following numerical examples, we avoid using the EC scheme, as it introduces unphysical numerical oscillations into non-smooth solutions since energy should be dissipated across discontinuities, similar to the observations reported for the models in 1D physical space \cite{dai2024energy}.

\subsection{Gaussian-shape hump with stochastic bottom}\label{GShump_stochasticbottom}

We consider the example with a random variable applied to the position of the hump in the bottom topography, 
\begin{align}
    B(x,y,\xi) &= 0.8\exp(-5(x-0.9+0.1\xi)^2-50(y-0.5)^2), & \xi &\sim \mathcal{U}(-1,1).
\end{align}
The discontinuous initial water surface is given by
\begin{equation}
    w(x,y,0,\xi) = \left\{
\begin{aligned}
&1.01,&  &0.05<x<0.15, \\
  &1,&  &\text{otherwise},
\end{aligned}
\right.
\end{equation}
with zero initial velocities $ u(x,y,0) = v(x,y,0)=0.$

The computational domain is $[0,2] \times [0,1]$. The test is a modification of the tests from \cite{LEVEQUE1998346,BEKP,dai2022hyperbolicity}. We compute the numerical solutions of the ES schemes on a $200\times 200$ mesh grid at times $t=0.6, 0.9, 1.2, 1.5, 1.8.$ The results include contour plots representing the mean water surface, accompanied by disk glyphs whose radius is proportional to the uncertainties at the corresponding cells. According to the plots in Figure \ref{fig_ex2_solution}, the water propagates to the right initially. After interacting with the hump, the water splits and propagates in all directions, generating more wave structures. The uncertainties are concentrated near the peak of the water surface until the water reaches the hump, after which they spread out along the wave structure. The results in Figure \ref{fig_ex2_solution} and \ref{fig_ex2_energy} show that both the ES1 and ES2 schemes dissipate energy and produce similar wave structures. In addition, the ES2 scheme achieves higher resolution and with less energy dissipation than the ES1 scheme which delivers a more smeared/diffuse solution, as theoretically expected.

\begin{figure}[htbp]
    \centering  
    \includegraphics[width=1\textwidth]{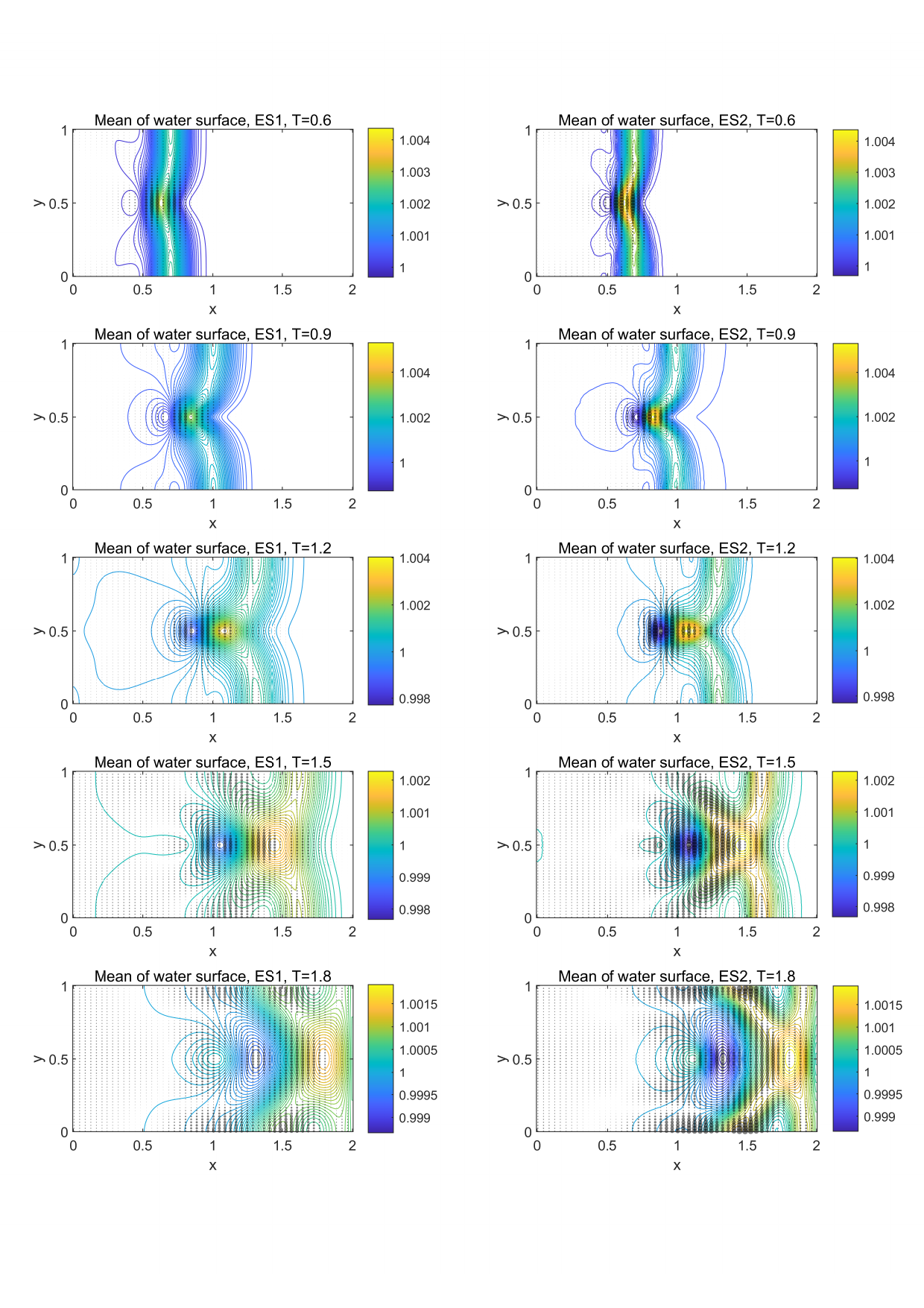}
    \centering
    \vspace{-1cm}
    \setlength{\abovecaptionskip}{-1cm}
    \captionsetup{font={scriptsize},justification=raggedright}
    \caption{Results for \cref{GShump_stochasticbottom}. Mean of water surface. Disk-glyph over mean contours, where the radii of the disks indicate the magnitude of the standard deviation. Left: ES1, the maximum standard deviation is 4.6524e-04, 7.3933e-04, 3.5168e-04, 1.3633e-04, 1.0856e-04, respectively. Right: ES2, the maximum standard deviation is 1.0398e-03, 2.1739e-03, 8.9851e-04, 3.4970e-04, 2.9730e-04, respectively. $200\times 200$, $t = 0.6, 0.9, 1.2, 1.5, 1.8$.}
    \label{fig_ex2_solution}
\end{figure}

\begin{figure}[htbp]
    \centering  \centerline{\includegraphics[width=1.1\textwidth]{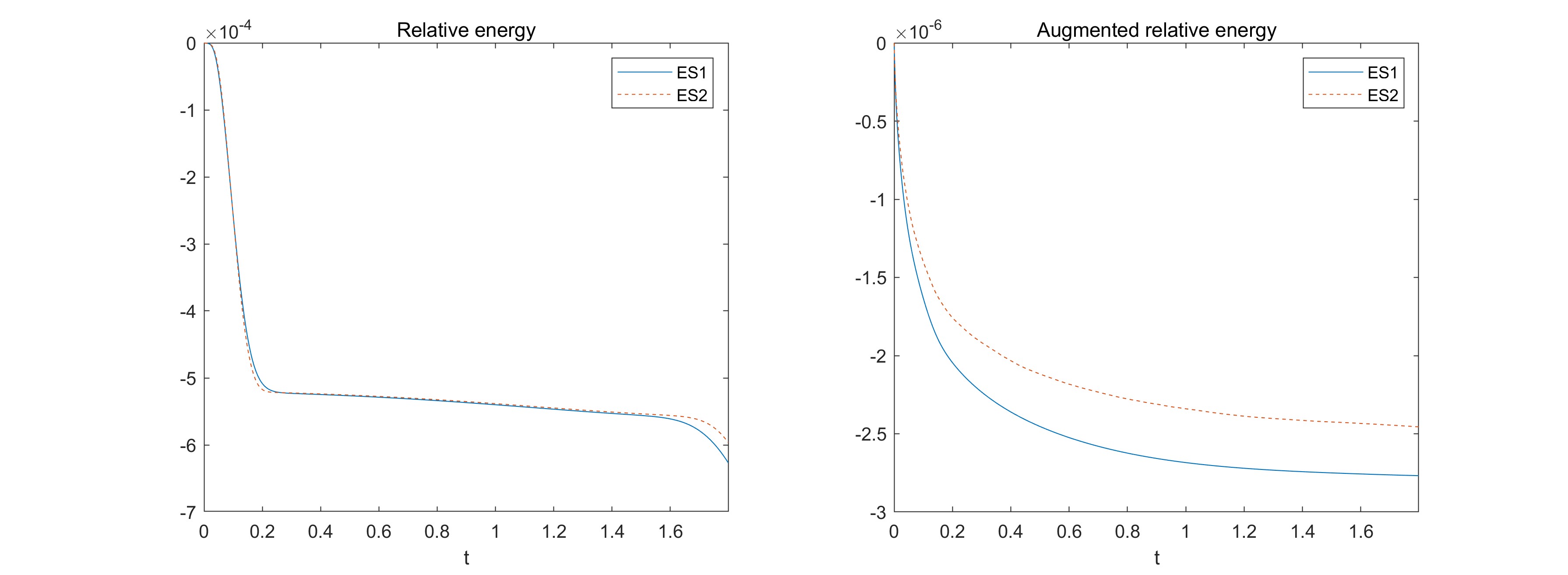}}
    \centering
    \captionsetup{font={footnotesize},justification=raggedright}
    \caption{Results for \cref{GShump_stochasticbottom}. Left: relative change in energy. Right: relative change in augmented energy.}
    \label{fig_ex2_energy}
\end{figure}

\subsection{Gaussian-shape hump with stochastic initial water surface}\label{Sec5_3}

To demonstrate the robustness of our schemes, we consider a similar example of the Gaussian-shape hump with a stochastic initial water surface, as discussed in Section \ref{GShump_stochasticbottom}. Consider the deterministic bottom topography
\begin{equation}
    B(x,y) = 0.8\exp(-5(x-0.9)^2-50(y-0.5)^2),
\end{equation}
and a stochastic initial water surface
\begin{equation}
    w(x,y,0,\xi) = \left\{
\begin{aligned}
&1+0.01(\xi+1),  &  &0.05<x<0.15, \\
  &1,  &  &\text{otherwise},
\end{aligned}
\right.
\end{equation}
where $\xi \sim \mathcal{U}(-1,1),$ and with zero initial velocities $ u(x,y,0) = v(x,y,0)=0.$ We implement the same settings as described in Section \ref{GShump_stochasticbottom} and present the results in Figures \ref{fig_ex3_solution} and \ref{fig_ex3_energy}. The plots display similar contours of the mean water surface as those in the previous example in Section \ref{GShump_stochasticbottom}. Both the ES1 and ES2 schemes demonstrate energy dissipation; however, the ES2 scheme provides better resolution and less energy dissipation compared to the ES1 scheme. Additionally, the uncertainties are more uniformly distributed across the water surface in proportion to the mean water surface, as compared to the distribution of uncertainties in Section \ref{GShump_stochasticbottom}.

\begin{figure}[htbp]
    \centering  
    \includegraphics[width=1\textwidth]{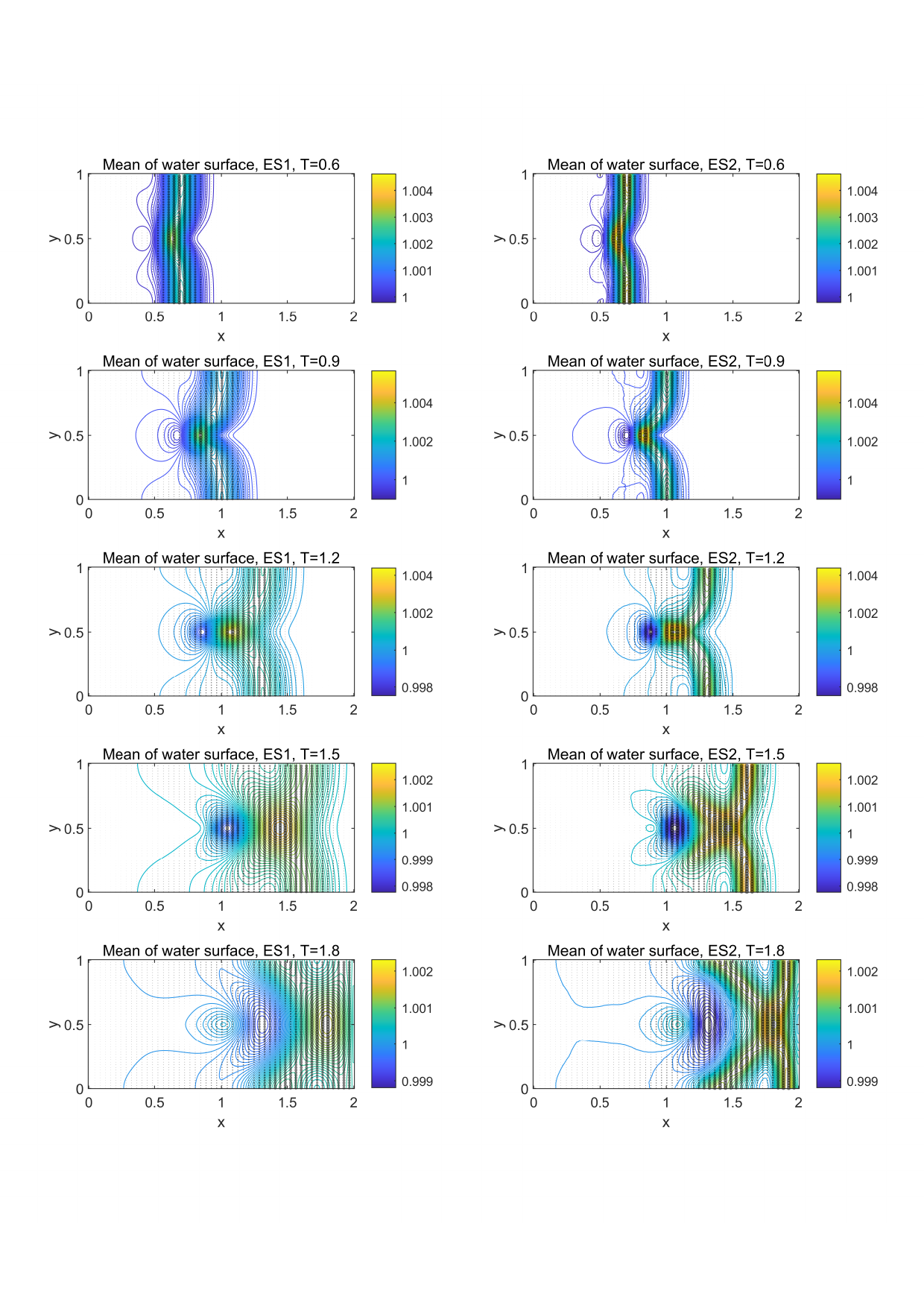}
    \centering
    \setlength{\abovecaptionskip}{-3cm}
    \captionsetup{font={scriptsize},justification=raggedright}
    \caption{Results for \cref{Sec5_3}. Mean of water surface. Disk-glyph over mean contours, where the radii of the disks indicate the magnitude of the standard deviation. Left: ES1, the maximum standard deviation is 1.8216e-03, 1.9375e-03, 1.5528e-03, 1.0162e-03, 8.2437e-04, respectively. Right: ES2, the maximum standard deviation is 2.6787e-03, 3.2736e-03, 2.5036e-03, 1.5027e-03, 1.3321e-03, respectively. $200\times 200$, $t = 0.6, 0.9, 1.2, 1.5, 1.8$.}
   \label{fig_ex3_solution}
\end{figure}

\begin{figure}[htbp]
    \centering  \centerline{\includegraphics[width=1.1\textwidth]{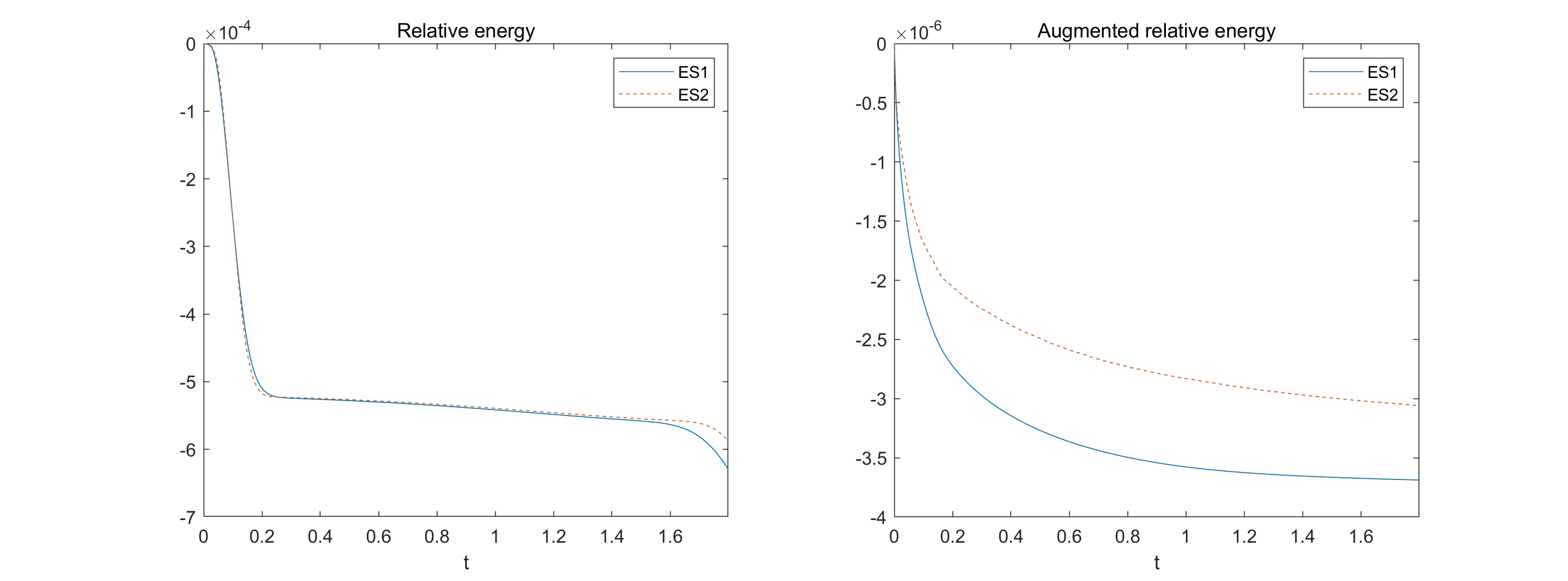}}
    \centering
    \captionsetup{font={scriptsize},justification=raggedright}
    \caption{Results for \cref{Sec5_3}. Left: relative change in energy. Right: relative change in augmented energy.}
    \label{fig_ex3_energy}
\end{figure}

\subsection{A submerged flat plateau}\label{Sec5_4}
We consider the deterministic initial water surface with a small deterministic perturbation of size $10^{-4}$ as follows:
\begin{align}
    w(x,y,0,\xi) &= \left\{
\begin{aligned}
&1.0001,&  &-0.4<x<-0.3, \\
  &1, &   &\text{otherwise,}
\end{aligned}
\right. &  u(x,y,0)&=v(x,y,0)=0,
\end{align}
and with a stochastic bottom function
\begin{align}
     B(x,y) = \left\{
\begin{aligned}
& 0.9998, &  &r\le 0.1, \\
  & 9.997(0.2-r) + 0.0001(\xi + 1) , & &0.1<r\le 0.2,\\
  & 0.0001, & &\text{otherwise,}
\end{aligned}
\right.
\end{align}
where $r \coloneqq \sqrt{x^2+y^2} + 0.0001$ and $\xi \sim \mathcal{U}(-1,1)$.

The bottom is a flat plateau close to the water surface with a small perturbation of magnitude $10^{-4}$. This topography is continuous but lacks smoothness in the physical domain. The test is a modification of the test from \cite{BEKP,dai2022hyperbolicity}. We compute the solutions of the ES schemes over the computational domain $[-0.5,0.5] \times [-0.5,0.5]$ at times $t=0.2, 0.35, 0.5, 0.65$. The initial deterministic perturbation generates two waves. The left-going wave travels out of the computational domain, while the right-going wave interacts with the plateau, leading to complex wave patterns as the interaction progresses. The results in Figure \ref{fig_ex4_solution} demonstrate that our schemes effectively capture the stochastic lake-at-rest solution, indicating that they are well-balanced. Both the ES1 and ES2 schemes dissipate energy, with the ES2 scheme providing higher resolution and less energy dissipation compared to the ES1 scheme. Note that the uncertainties of the ES1 scheme are not visible, since the ES1 standard deviation is much smaller than that of the ES2 scheme.

\begin{figure}[htbp]
    \centering  
    \includegraphics[width=1\textwidth]{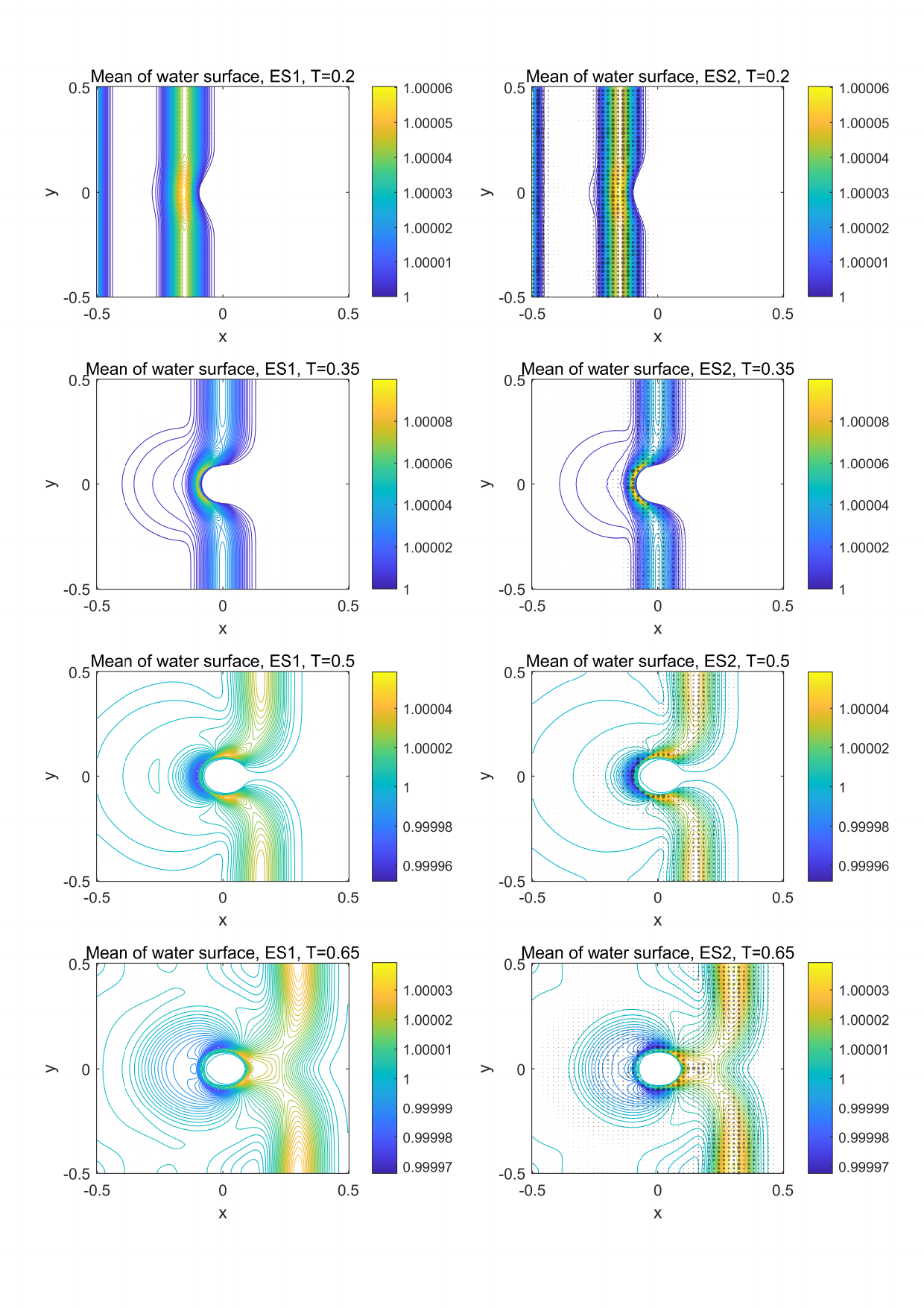}
    \centering
     \setlength{\abovecaptionskip}{-1cm}
    \captionsetup{font={scriptsize},justification=raggedright}
    \caption{Results for \cref{Sec5_4}. Mean of water surface. Disk-glyph over mean contours, where the radii of the disks indicate the magnitude of the standard deviation. Left: ES1, the maximum standard deviation is 1.7485e-08, 1.9120e-07, 1.3268e-07, 1.0392e-07, respectively. Right: ES2, the maximum standard deviation is 6.6484e-07, 1.8990e-06, 1.4625e-06,  8.1986e-07, respectively. $200\times 200$, $t = 0.2, 0.35, 0.5, 0.65$. }
    \label{fig_ex4_solution}
\end{figure}

\begin{figure}[htbp]
    \centering  \centerline{\includegraphics[width=1.1\textwidth]{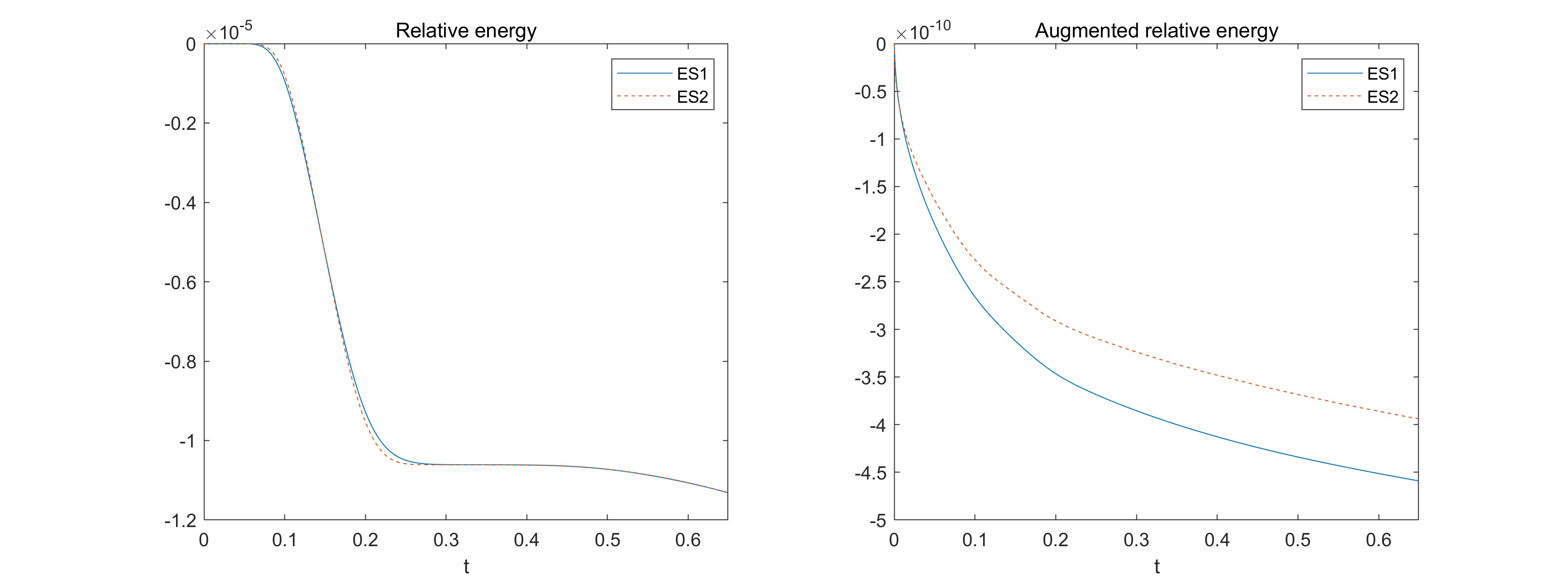}}
    \centering
    \captionsetup{font={scriptsize},justification=raggedright}
    \caption{Results of \cref{Sec5_4}. Left: relative change in energy. Right: relative change in augmented energy.}
    \label{fig_ex4_energy}
\end{figure}

The initial deterministic perturbation on the water surface is $10^{-4}$. Throughout the time evolution, the maximum mean water surface displacement from the steady state remains within this initial perturbation threshold, and the standard deviation remains small. These results demonstrate the capability of both the ES1 and ES2 schemes to accurately capture the lake-at-rest solution under small perturbations.  

For comparison, we also present a non-well-balanced ES2 scheme by modifying the numerical source term to
\begin{equation}\label{eq:S-nwb}
     \bm{S}_{i,j}  = \begin{pmatrix}
       0 \\
       -\frac{g}{2\Delta x} \mathcal{P}(\overline{\bm h}_{i,j}) \big(\llbracket \bm B\rrbracket_{i+\frac{1}{2},j} +   \llbracket \bm B\rrbracket_{i-\frac{1}{2},j}\big) \\
       -\frac{g}{2\Delta y} \mathcal{P}(\overline{\bm h}_{i,j}) \big(\llbracket \bm B\rrbracket_{i,j+\frac{1}{2}} +   \llbracket \bm B\rrbracket_{i,j-\frac{1}{2}}\big)
   \end{pmatrix},
\end{equation}
corresponding to a central differencing scheme for the derivative of bottom topography. 

We can observe from Figure \ref{fig_ex4_nowb_solution} that the wave structure produced by the non-well-balanced ES2 scheme differs significantly from that of the well-balanced schemes in Figure \ref{fig_ex4_solution}. The non-well-balanced scheme generates substantially larger perturbations compared to the well-balanced solutions. Note that even with mesh refinement, the non-well-balanced scheme cannot produce physically accurate wave structure, and the perturbations remain larger than the initial value throughout the time evolution.

\begin{figure}[htbp]
    \centering  \includegraphics[width=1\textwidth]{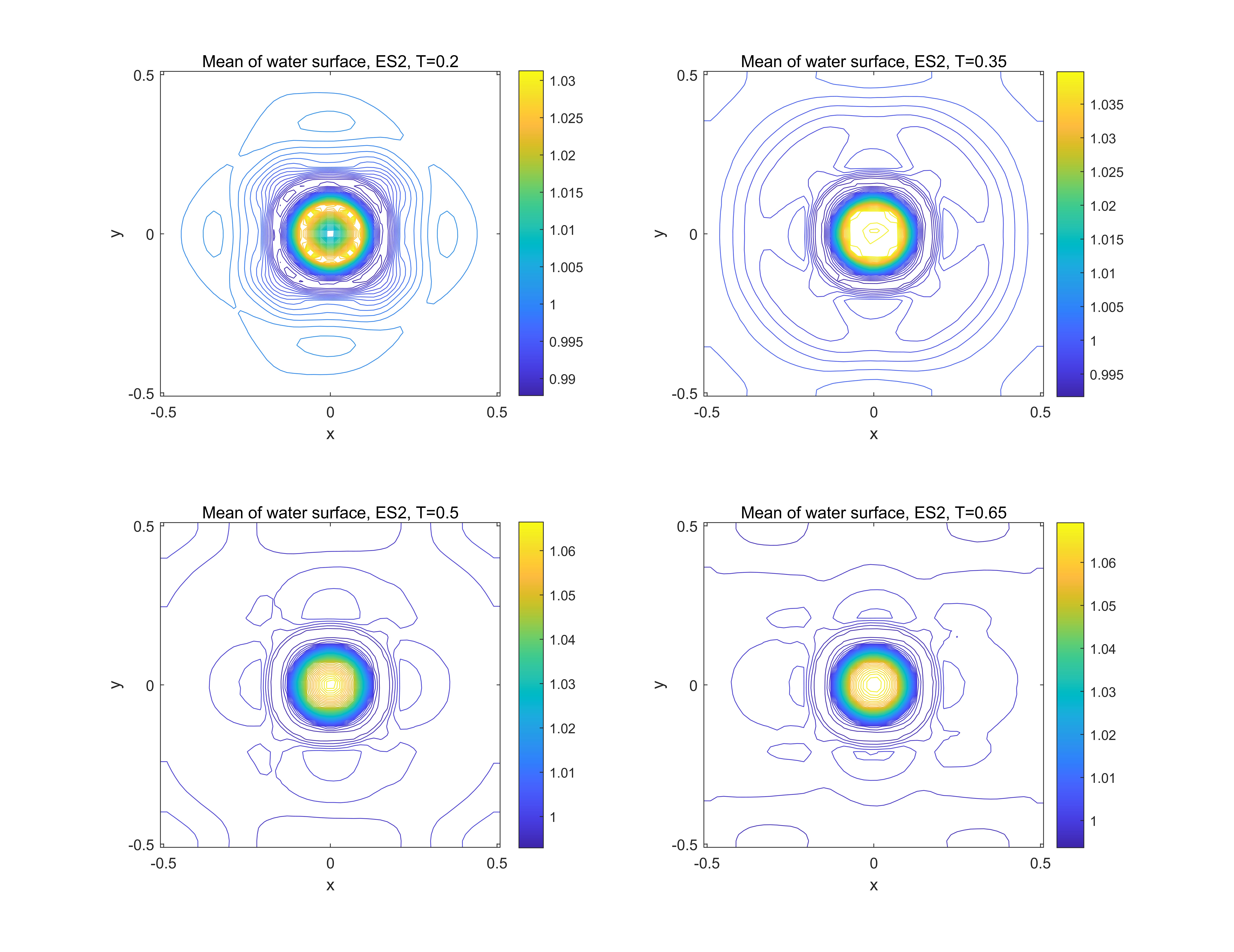}
    \centering
    \setlength{\abovecaptionskip}{-0.5cm}
    \captionsetup{font={scriptsize},justification=raggedright}
    \caption{Results for \cref{Sec5_4}. Mean of water surface. Non-well-balanced ES2 scheme. $50\times 50$, $t = 0.2, 0.35, 0.5, 0.65$.}
    \label{fig_ex4_nowb_solution}
\end{figure}

\subsection{Perturbation to lake-at-rest}\label{Sec5_5}
We consider another example of a perturbation to lake-at-rest solution with a stochastic initial water surface
\begin{align}
    w(x,y,0,\xi) &= \left\{
\begin{aligned}
&1 + 0.001(\xi+1), & |xy|\le 0.05, \\
  &1, &\text{otherwise,}
\end{aligned}
\right.& u(x,y,0)&=v(x,y,0)=0,
\end{align}
and with the deterministic bottom topography
\begin{subequations}
\begin{equation}\label{eq:lar-B1}
    B(x,y) = \left\{
\begin{aligned}
& 0.25(\cos(5\pi(xy+0.35))+1), & &-0.55<xy<-0.15, \\
  & 0.125(\cos(10\pi(xy-0.35))+1), & &0.25<xy<0.45,\\
  & 0, & &\text{otherwise.}
\end{aligned}
\right.
\end{equation}
The computational domain is $[-1,1]\times [-1,1]$ and $\xi \sim \mathcal{U}(-1,1)$. The test is a 2D modification of the test from \cite{chertock2015well,dai2024energy}.

The results presented in Figure \ref{fig_ex5_solution} indicate that the water propagates towards the four vertices of the rectangular computational domain. The wave moves slower after reaching the raised bottom. As in previous examples, both ES1 and ES2 schemes are well-balanced and produce a similar wave structure, with the ES2 scheme creating a more accurate solution with a higher resolution and less diffusion. Furthermore, uncertainties are distributed in proportion to the mean water surface in the ES schemes, as they originate from the initial water surface rather than the bottom topography. 

To investigate the impact of the bottom topography bump on the results, we slightly modify the bottom topography to
\begin{align}\label{eq:lar-B2}
  B(x,y) = \left\{ \begin{array}{ll} 
    0.45(\cos(5\pi(xy+0.35))+1),&  -0.55<xy<-0.15 \\
    0.125(\cos(10\pi(xy-0.35))+1),& 0.25<xy<0.45 \\
    0, & \textrm{otherwise},
  \end{array}\right.
\end{align}
\end{subequations}
and plot results for the ES1 and ES2 schemes in \cref{fig_ex5_solution2}.
Comparing the solutions in Figure \ref{fig_ex5_solution2} with those in Figure \ref{fig_ex5_solution} at $T = 0.6$ and $T = 0.8$, one can observe the significant influence of the bottom topography on the wave structure.

\begin{figure}[htbp]
    \centering  \includegraphics[width=1\textwidth]{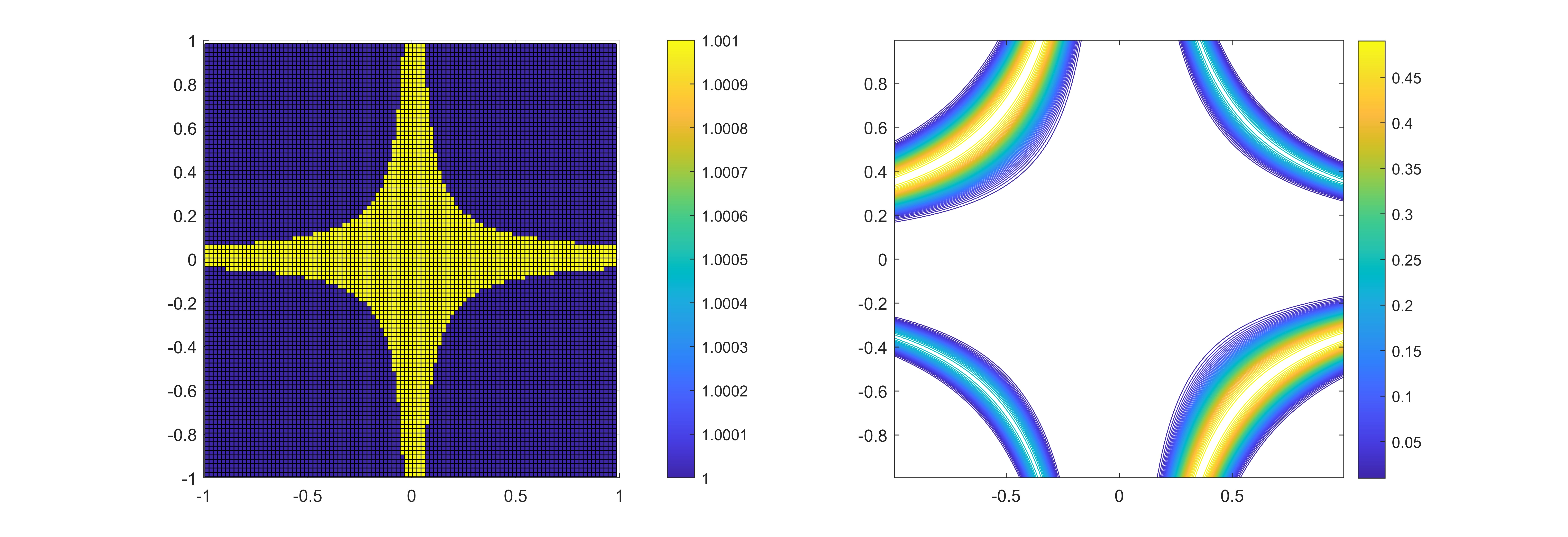}
    \centering
    \captionsetup{font={scriptsize},justification=raggedright}
    \caption{Results for \cref{Sec5_5}. Left: initial water surface. Right: bottom topography. $200\times 200$}
    \label{fig_ex5_bottom-initialdata}
\end{figure}

\begin{figure}[htbp]
    \centering  \includegraphics[width=1\textwidth]{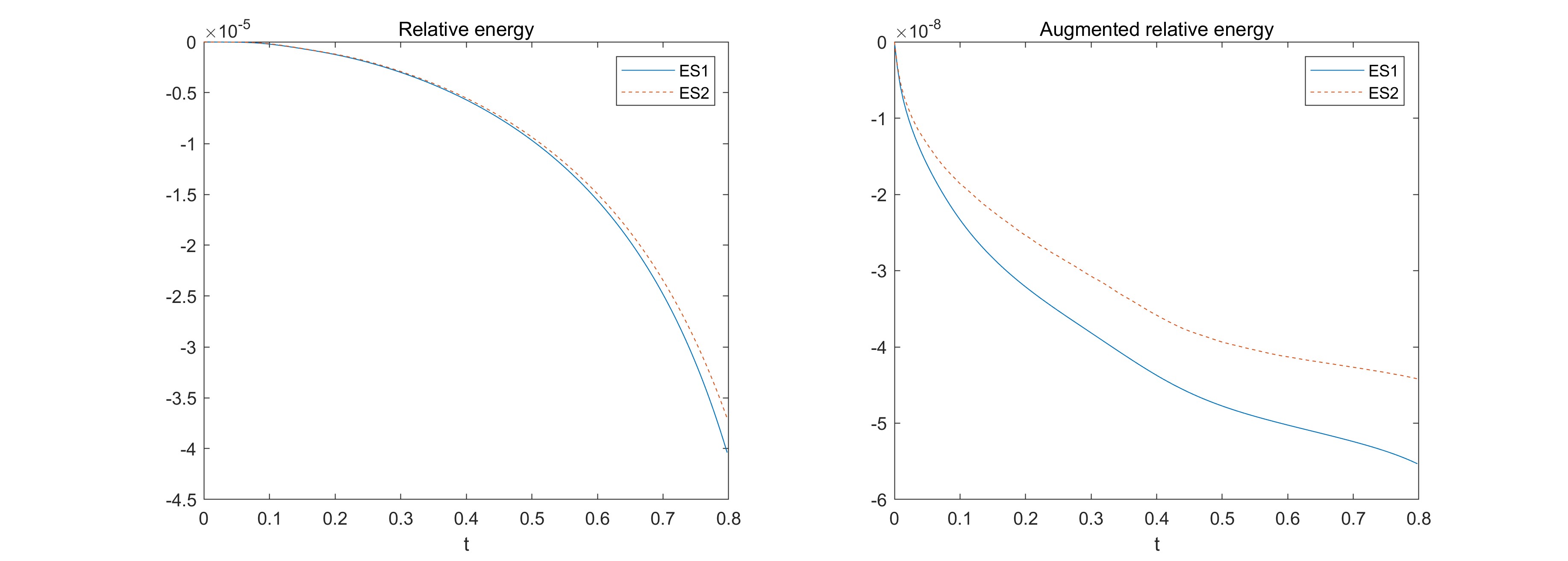}
    \centering
    \captionsetup{font={scriptsize},justification=raggedright}
    \caption{Results for \cref{Sec5_5}. Left: relative change in energy. Right: relative change in augmented energy. $200\times 200$}
    \label{fig_ex5_energy}
\end{figure}

\begin{figure}[htbp]
    \centering  \includegraphics[width=1\textwidth,height=1.35\textwidth]{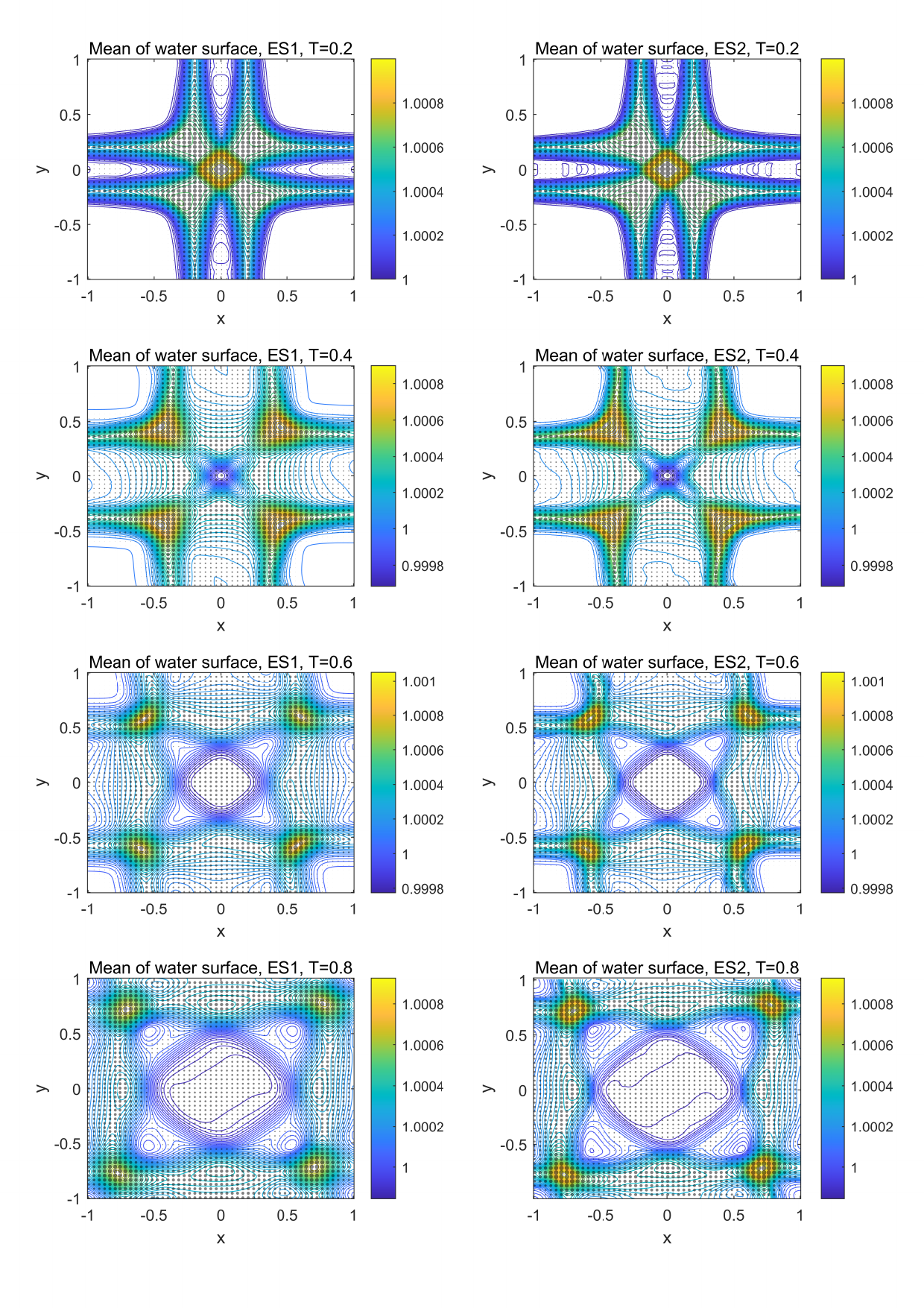}
    \centering
     \setlength{\abovecaptionskip}{-0.5cm}
    \captionsetup{font={scriptsize},justification=raggedright}
    \caption{Results for \cref{Sec5_5} with bottom topography $B$ given by \eqref{eq:lar-B1}. Contour plots for the water surface. Disk-glyph over mean contours, where the radii of the disks indicate the magnitude of the standard deviation. Left: ES1, the maximum standard deviation is  5.7522e-04, 4.6740e-04, 4.8587e-04, 3.9092e-04. Right: ES2, the maximum standard deviation is 5.7687e-04, 5.0798e-04,   5.8621e-04, 5.1600e-04. The mesh is $200\times 200$ with snapshots in time shown at $T = 0.2, 0.4, 0.6, 0.8.$}
    \label{fig_ex5_solution}
\end{figure}

\begin{figure}[htbp]
    \centering  \includegraphics[width=1\textwidth,height=1.35\textwidth]{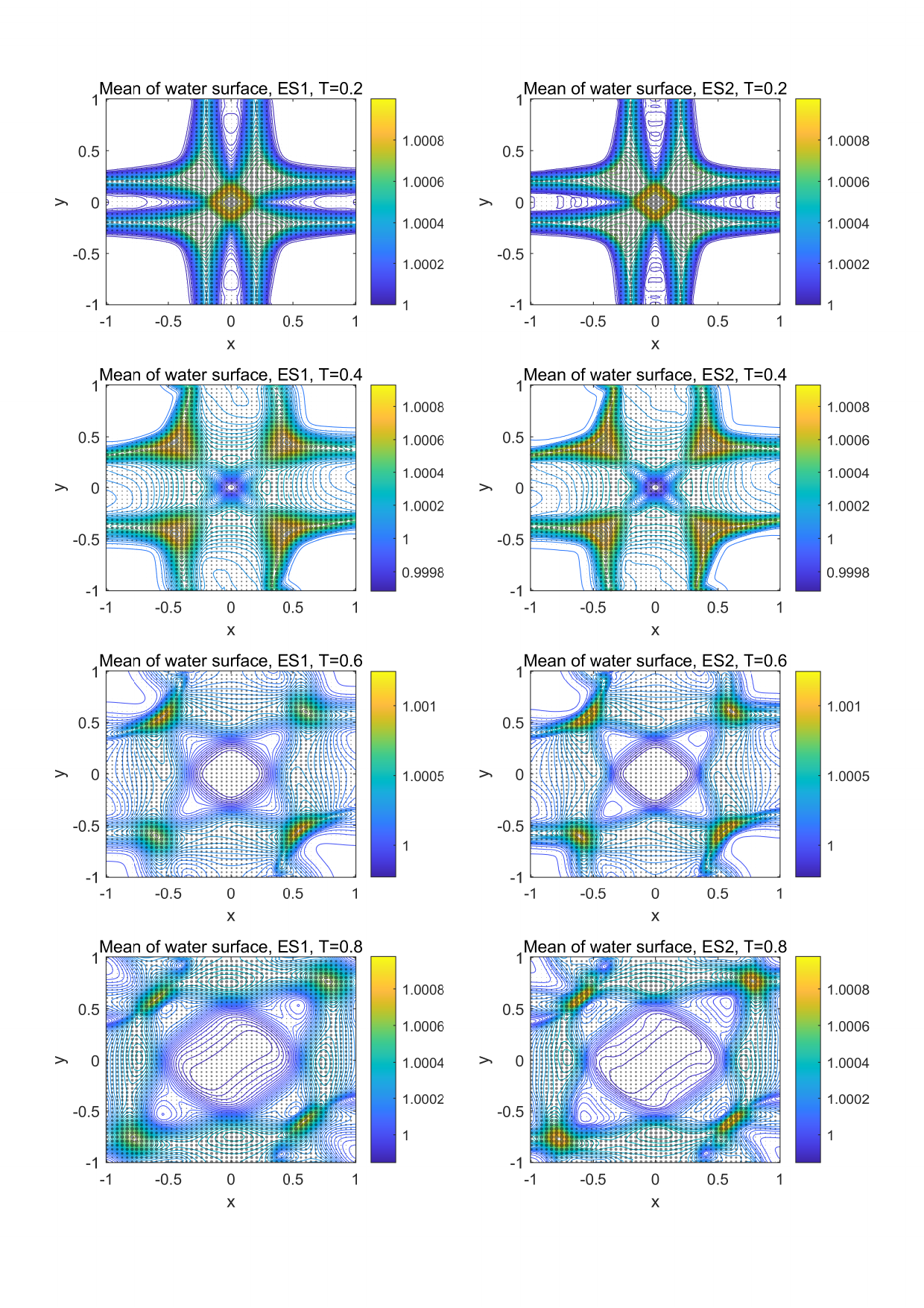}
    \centering
     \setlength{\abovecaptionskip}{-1cm}
    \captionsetup{font={scriptsize},justification=raggedright}
    \caption{Results for \cref{Sec5_5} with bottom topography $B$ given by \eqref{eq:lar-B2}. Contour plots for the water surface. Disk-glyph over mean contours, where the radii of the disks indicate the magnitude of the standard deviation. Left: ES1, the maximum standard deviation is 5.7522e-04, 4.8313e-04, 5.9729e-04, 4.0329e-04. Right: ES2, the maximum standard deviation is 5.7688e-04, 5.2388e-04, 7.0756e-04, 5.4857e-04.
    The mesh is $200\times 200$ with snapshots in time shown at $T = 0.2, 0.4, 0.6, 0.8.$}
    \label{fig_ex5_solution2}
\end{figure}

For comparison, we also simulate results using an ES2 scheme that employs the non-well-balanced source term discretization in \eqref{eq:S-nwb}.
From the results shown in Figure \ref{fig_ex5_nowb_solution}, the non-well-balanced ES2 scheme produces a different wave structure and generates a larger displacement of the water surface compared to the well-balanced ES1 and ES2 schemes. Again, note that refining the mesh does not produce the accurate wave structure, and
the perturbations during the time evolution are still larger than the initial perturbation for the non-well-balanced scheme.

\begin{figure}[htbp]
    \centering  \includegraphics[width=0.9\textwidth]{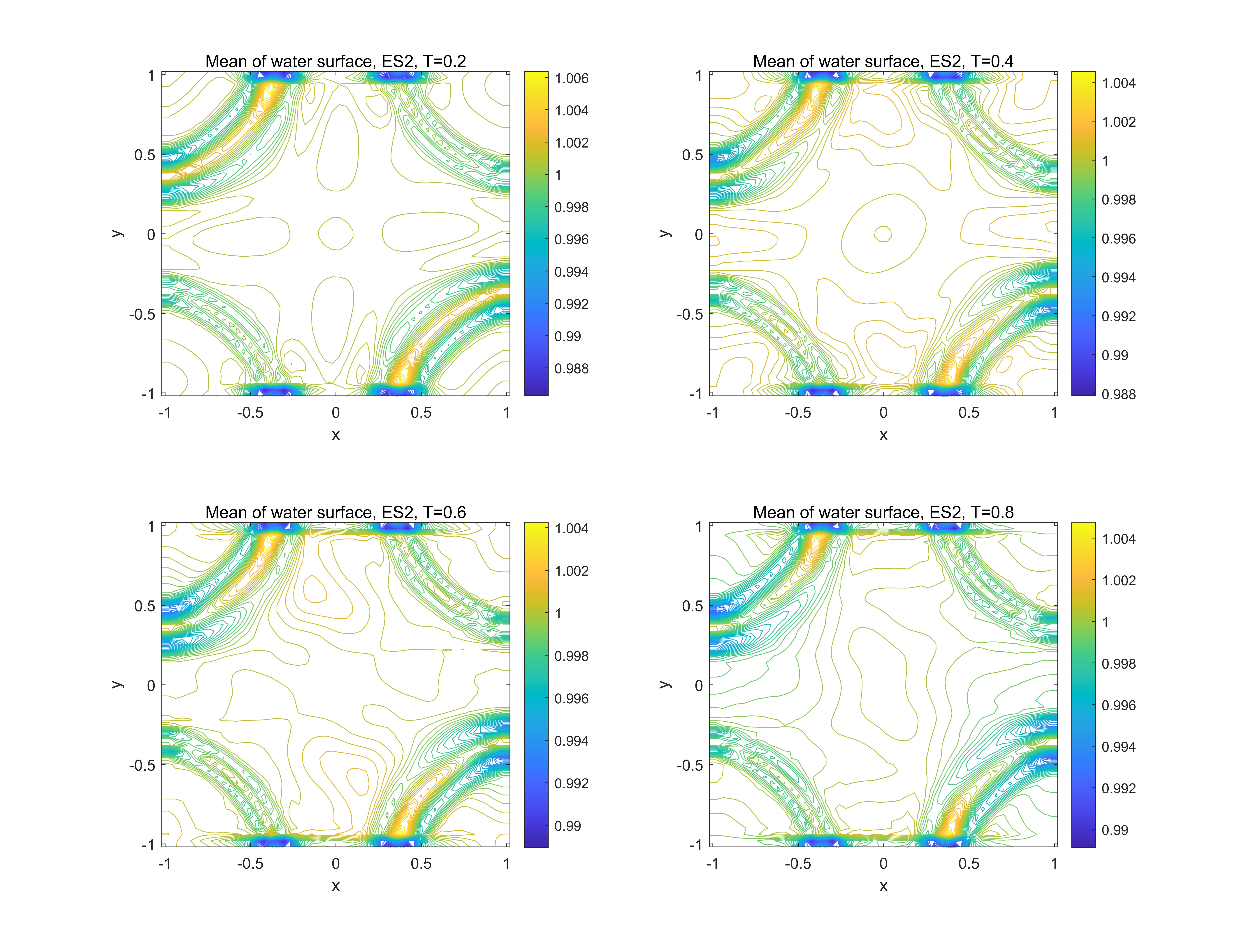}
    \centering
    \captionsetup{font={scriptsize},justification=raggedright}
    \caption{Results for \cref{Sec5_5}. Mean of water surface. Non-well-balanced ES2 scheme. $50\times 50$, $t = 0.2, 0.4, 0.6, 0.8$.}
    \label{fig_ex5_nowb_solution}
\end{figure}

\subsection{Gaussian-shape hump with two-dimensional random variable}\label{Sec5_6}
As a final test, we consider a two-dimensional in the stochastic space variant of the example used for the accuracy test provided in Section \ref{Sec5_1}, with a deterministic initial water surface
\begin{align}
    w(x,y,0,\xi) &= 1, & u(x,y,0,\xi) &= 0.3,& v(x,y,0,\xi) &= 0,
\end{align}
and a stochastic bottom
\begin{equation}
    B(x,y,\xi) = 0.5\exp(-12.5(\xi_1+1)(x-1)^2 - 25(\xi_2+1)(y-0.5)^2).
\end{equation}
Here $\xi = (\xi_1, \xi_2)$ is a two-dimensional random vector with independent components, where $\xi_1$ has a $\mathrm{Beta}(\beta+1,\alpha+1)$ distribution on $[-1,1]$ with parameters $(\alpha,\beta) = (3,1)$, and $\xi_2$ is uniformly distributed on $[-1,1]$. The uncertainties are in the width of the Gaussian-shape hump. To simulate the SG SWE system, we use $K_1 = K_2 =3$ polynomial terms for the parameters $\xi_1$ and $\xi_2$, resulting in a tensor-product space of polynomials with dimension $K = K_1K_2 = 9$. We compute solutions for $(x,y) \in [0,2]\times [0,1]$ up to time $T=0.07$. Due to the effect of the two-dimensional random variable, the solution in Figure \ref{fig_ex6_solution} has a more complex structure compared to the structure of the solution in the test in Figure \ref{fig_ex1}. The largest uncertainties are observed near the peak and bottom of the mean water surface, while the smallest uncertainties occur in the region between them. The energy plots in Figure \ref{fig_ex6_energy} demonstrate that measuring augmented energy is necessary to account for energy-related boundary effects in this example.

\begin{figure}[htbp]
    \centering  \includegraphics[width=1\textwidth]{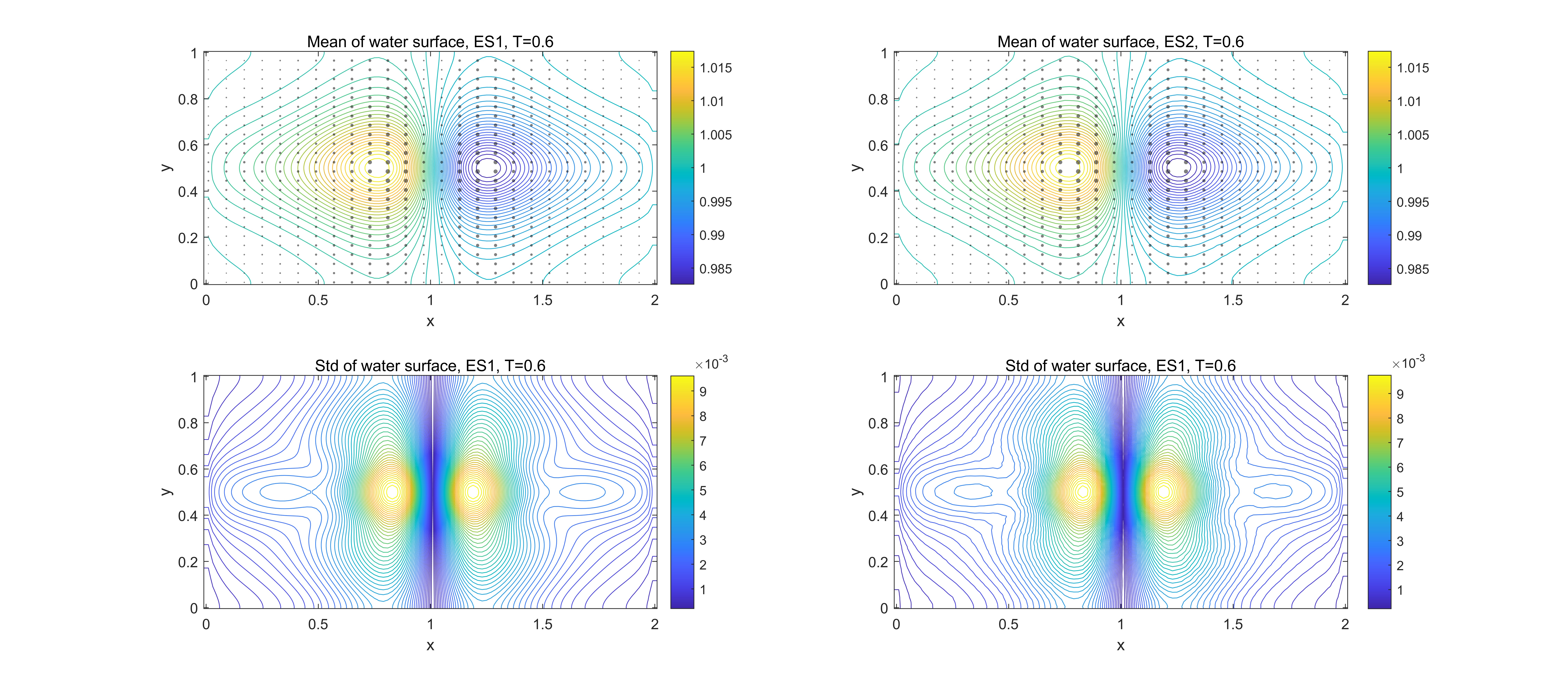}
    \centering
    \captionsetup{font={scriptsize},justification=raggedright}
    \caption{Results for \cref{Sec5_6}. Contour plots for the water surface. Disk-glyph over mean contours, where the radii of the disks indicate the magnitude of the standard deviation. Left: ES1, the maximum standard deviation is 9.8031e-03. Right: ES2, the maximum standard deviation is 9.9456e-03. $100\times 100, T = 0.07.$}
    \label{fig_ex6_solution}
\end{figure}

\begin{figure}[htbp]
    \centerline{\includegraphics[width=1\textwidth]{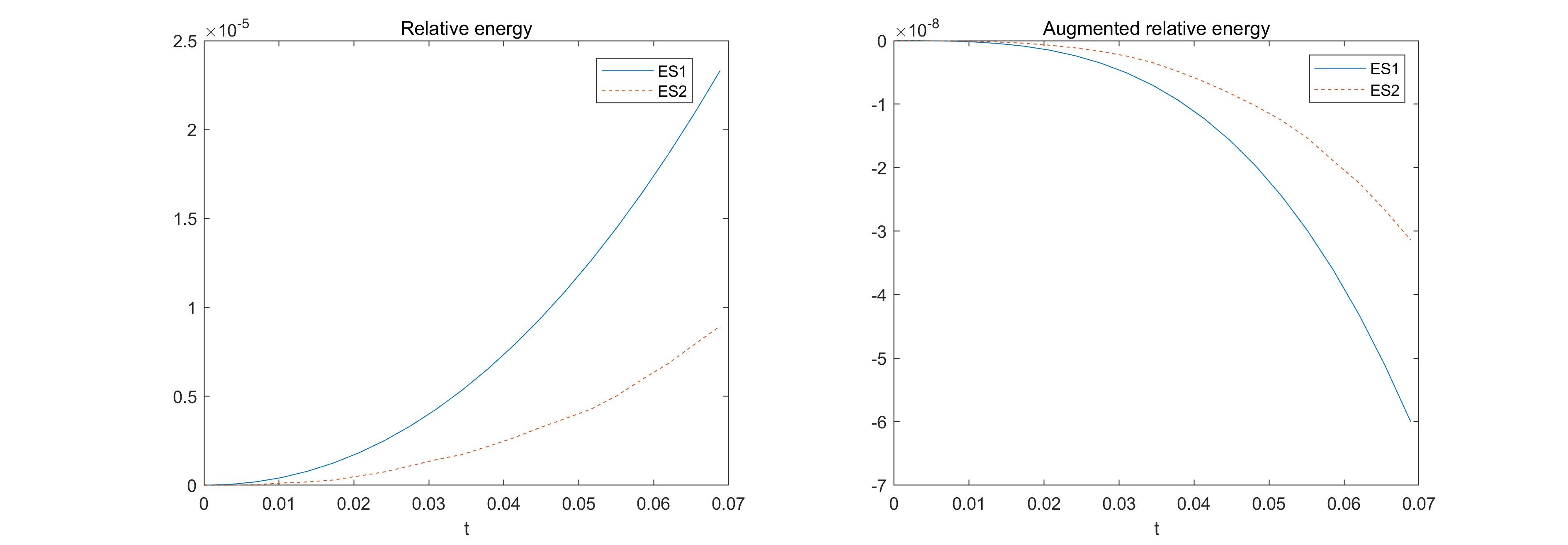}}
    \centering
    \captionsetup{font={scriptsize},justification=raggedright}
    \caption{Results for \cref{Sec5_6}. Left: relative change in energy. Right: relative change in augmented energy. $100\times 100.$}
    \label{fig_ex6_energy}
\end{figure}

\section{Conclusion}\label{Sec6}
In this work we derived an entropy flux pair for the two-dimensional hyperbolicity-preserving and positivity-preserving SG SWE system developed in \cite{dai2022hyperbolicity}. The entropy flux pair facilitates the formulation of entropy admissibility criteria to obtain the desired physical solution among non-unique weak solutions. Using this entropy pair, we constructed second-order energy conservative and first- and second-order energy stable finite volume schemes for the two-spatial-dimensional SG SWE, with the well-balanced property. We presented several numerical experiments demonstrating the efficiency and robustness of our schemes. Currently, our schemes are limited to second-order spatial accuracy; however, they can be extended to higher orders by developing higher-order energy conservative finite volume schemes, more accurate diffusion operators, or employing discontinuous Galerkin methods. Another possibility for future research is to extend the developed framework to models associated with dry/wet interfaces.

\section*{Acknowledgement}
The work of Yekaterina Epshteyn, Akil Narayan and Yinqian Yu was partially supported by NSF DMS-2207207. AN was also partially supported
by NSF DMS-1848508.

\footnotesize{\bibliographystyle{plain} 
\bibliography{reference} 

\appendix

\section{Proof of \cref{Thm_entropy_flux_pair}}\label{sec:entropy-tuple-proof}
\Cref{Thm_entropy_flux_pair} is a direct result of convexity of the function $E(\widehat{U})$, proven in \cref{ssec:E-convexity-proof}, and also of the fact that $(E,H,K)$ satisfy the companion balance law, proven in \cref{ssec:companion-balance-law-proof}. The proofs of these results require the technical result of \cref{lemma_gradient}, so we begin with that result.

\subsection{Proof of \cref{lemma_gradient}}
The proof is based on the calculations in Lemma 3.1 of \cite{dai2024energy}, where the partial derivative of the inverse of a parameterized matrix is computed. Let $\widehat{h}_l$ be an arbitrary component of $\widehat{h}$, then compute the partial derivative
\begin{equation}
    \frac{\partial \mathcal{P}^{-1}(\widehat{h})}{\partial \widehat{h}_l} = - \mathcal{P}^{-1}(\widehat{h}) \frac{\partial \mathcal{P}(\widehat{h})}{\partial \widehat{h}_l}\mathcal{P}^{-1}(\widehat{h}) = - \mathcal{P}^{-1}(\widehat{h}) \mathcal{M}_l\mathcal{P}^{-1}(\widehat{h}).
\end{equation}
Multiply $\widehat{q^x}$ on both sides, then 
\begin{equation}
    \frac{\partial \widehat{u}}{\partial \widehat{h}_l} = \frac{\partial \mathcal{P}^{-1}(\widehat{h})}{\partial \widehat{h}_l}\widehat{q^x} =  - \mathcal{P}^{-1}(\widehat{h}) \mathcal{M}_l\mathcal{P}^{-1}(\widehat{h}) \widehat{q^x} = - \mathcal{P}^{-1}(\widehat{h}) \mathcal{M}_l\widehat{u},
\end{equation}
resulting in 
\begin{equation}
    \frac{\partial \widehat{u}}{\partial \widehat{h}} = -\mathcal{P}^{-1}(\widehat{h})[\mathcal{M}_1 \widehat{u},\dots,\mathcal{M}_K \widehat{u}  ] \overset{\eqref{commutative_prop}}{=} -\mathcal{P}^{-1}(\widehat{h})\mathcal{P}(\widehat{u}).
\end{equation}
Similarly, we also get
\begin{equation}
     \frac{\partial \widehat{v}}{\partial \widehat{h}} = -\mathcal{P}^{-1}(\widehat{h})\mathcal{P}(\widehat{v}).
\end{equation}
In addition, by definition \eqref{velocity_coeff}, it is straightforward that
\begin{align}
    \frac{\partial \widehat{u}}{\partial \widehat{q^x}}& = \frac{\partial \widehat{v}}{\partial \widehat{q^y}} = \mathcal{P}^{-1}(\widehat{h}),& \quad \frac{\partial \widehat{v}}{\partial \widehat{q^x}} &= \frac{\partial \widehat{u}}{\partial \widehat{q^y}} = 0.
\end{align}
This completes the proof.

\subsection{Proof of \cref{lemma_convexity}}\label{ssec:E-convexity-proof}
    The proof is straightforward and follows from computing the Hessian to show that it is positive definite. Recall the entropy function,
     \begin{equation}
        E(\widehat{U}) = \frac{1}{2}\Big((\widehat{q^x})^\top \widehat{u} + (\widehat{q^y})^\top \widehat{v}\Big) + \frac{1}{2}g\|\widehat{h}\|^2 + g\widehat{h}^\top \widehat{B},
    \end{equation}
where we denote the terms separately,
\begin{align}
    f_1 &\coloneqq \frac{1}{2}(\widehat{q^x})^\top \widehat{u},&   f_2 &\coloneqq \frac{1}{2}(\widehat{q^y})^\top \widehat{v}, & f_3 &\coloneqq \frac{1}{2}g\|\widehat{h}\|^2 + g\widehat{h}^\top \widehat{B}.
\end{align}
By Lemma \ref{lemma_gradient}, we compute the first-order partial derivatives of $f_1$,
\begin{equation}
   \begin{split}
     &  \frac{\partial f_1}{ \partial \widehat{h}} = \frac{1}{2} (\widehat{q^x})^\top \frac{\partial \widehat{u}}{\partial \widehat{h}} \overset{\eqref{gradient_velocity}}{=} -\frac{1}{2} (\widehat{q^x})^\top \mathcal{P}^{-1}(\widehat{h})\mathcal{P}(\widehat{u}) = -\frac{1}{2} \Big( \mathcal{P}^{-1}(\widehat{h})\widehat{q^x}
 \Big)^\top \mathcal{P}(\widehat{u}) = -\frac{1}{2}\widehat{u}^\top \mathcal{P}(\widehat{u}),\\
 & \frac{\partial f_1}{\partial \widehat{q^x}} \overset{\eqref{gradient_velocity}}{=} \widehat{u}^\top, \quad \frac{\partial f_1}{\partial \widehat{q^y}} = 0,
   \end{split}
\end{equation}
and the second-order partial derivatives,
\begin{equation}
    \begin{split}
         \frac{ \partial^2 f_1}{\partial \widehat{h}^2} &= -\frac{1}{2}\frac{\partial}{\partial \widehat{h}} \big( \widehat{u}^\top  \mathcal{P}(\widehat{u})\big) \overset{\eqref{gradient_velocity}}{=} \mathcal{P}(\widehat{u}) \mathcal{P}^{-1}(\widehat{h})\mathcal{P}(\widehat{u}),\\
         \frac{\partial^2 f_1}{ \partial \widehat{h} \partial \widehat{q^x}} &\overset{\eqref{gradient_velocity}}{=} - \mathcal{P}(\widehat{u}) \mathcal{P}^{-1}(\widehat{h})\\  \frac{\partial^2 f_1}{\partial \widehat{q^x}^2} &\overset{\eqref{gradient_velocity}}{=} \mathcal{P}^{-1}(\widehat{h}),
    \end{split}
\end{equation}
with the remaining elements in the Hessian of $f_1$ equal to zero, i.e.,
\begin{equation}
    \frac{ \partial^2 f_1}{\partial \widehat{U}^2} = \begin{pmatrix}
        \mathcal{P}(\widehat{u}) \mathcal{P}^{-1}(\widehat{h})\mathcal{P}(\widehat{u}) &  - \mathcal{P}(\widehat{u}) \mathcal{P}^{-1}(\widehat{h}) & 0 \\
        -\mathcal{P}^{-1}(\widehat{h}) \mathcal{P}(\widehat{u})  &  \mathcal{P}^{-1}(\widehat{h}) & 0 \\
        0 & 0 & 0
    \end{pmatrix}.
\end{equation}
Similarly to $f_1$, we compute the Hessian of $f_2$ as follows,
\begin{equation}
    \frac{ \partial^2 f_2}{\partial \widehat{U}^2} = \begin{pmatrix}
        \mathcal{P}(\widehat{v}) \mathcal{P}^{-1}(\widehat{h})\mathcal{P}(\widehat{v})  & 0 & - \mathcal{P}(\widehat{v}) \mathcal{P}^{-1}(\widehat{h}) \\
        0  &  0 & 0 \\
        - \mathcal{P}^{-1}(\widehat{h})\mathcal{P}(\widehat{v})  & 0 & \mathcal{P}^{-1}(\widehat{h})
    \end{pmatrix}.
\end{equation}
In addition, a direct computation gives
\begin{align}
   \frac{\partial f_3}{\partial \widehat{U}} &=  \Big( g(\widehat{h}+\widehat{B})^\top, 0, 0 \Big) & \implies&  &\frac{\partial^2 f_3}{\partial \widehat{U}^2} &= \begin{pmatrix}
       gI & 0 & 0\\ 0& 0 & 0 \\ 0 &0 & 0
   \end{pmatrix}.
\end{align}
Finally, for any vector $\big( w_1^\top, w_2^\top, w_3^\top \big)^\top \in \mathbb{R}^{3K}$, we compute the quadratic form associated with the Hessian matrices as follows:
\begin{equation}
\begin{split}
    &  \big( w_1^\top, w_2^\top, w_3^\top \big) \frac{ \partial^2 f_1}{\partial \widehat{U}^2}\begin{pmatrix}
        w_1 \\w_2 \\ w_3
    \end{pmatrix}  = \big( \mathcal{P}(\widehat{u})w_1 - w_2 \big)^\top \mathcal{P}^{-1}(\widehat{h})\big( \mathcal{P}(\widehat{u})w_1 - w_2 \big) \ge 0,\\
    &  \big( w_1^\top, w_2^\top, w_3^\top \big) \frac{ \partial^2 f_2}{\partial \widehat{U}^2}\begin{pmatrix}
        w_1 \\w_2 \\ w_3
    \end{pmatrix}  = \big( \mathcal{P}(\widehat{v})w_1 - w_3 \big)^\top \mathcal{P}^{-1}(\widehat{h})\big( \mathcal{P}(\widehat{v})w_1 - w_3 \big) \ge 0,\\
    & \big( w_1^\top, w_2^\top, w_3^\top \big) \frac{ \partial^2 f_3}{\partial \widehat{U}^2}\begin{pmatrix}
        w_1 \\w_2 \\ w_3
    \end{pmatrix}  = g \|w_1\|^2 \ge 0.
\end{split}
\end{equation}
This results in that the quadratic form associated with the Hessian of $E$ being non-negative, provided that $\mathcal{P}$ is positive definite. In addition, the quadratic form vanishes if and only if $w_1 = w_2 = w_3 = \bm 0$, implying that $E$ is strictly convex in $\widehat{U}$, completing the proof.


\subsection{Proof of \cref{lemma_companion_bl}}\label{ssec:companion-balance-law-proof}
Instead of directly demonstrating \eqref{stochastic_companion_bl}, we will present the equivalent compatibility condition,
\begin{equation}\label{compatibility_condition}
    \frac{\partial E}{\partial \widehat{U}}\Big( \frac{\partial \widehat{F}}{\partial \widehat{U}} \frac{\partial \widehat{U}}{\partial x} + \frac{\partial \widehat{G}}{\partial \widehat{U}} \frac{\partial \widehat{U}}{\partial y}  - \widehat{S} \Big) = \frac{\partial H}{\partial x} + \frac{\partial K}{\partial y}.
\end{equation}
We divide each of the entropy and flux functions into two parts, respectively,
\begin{align}\label{Lemma4_eq1}
        E(\widehat{U}) &= E_1 + E_2, & E_1 &=  \frac{1}{2}\Big((\widehat{q^x})^\top \widehat{u} + (\widehat{q^y})^\top \widehat{v}\Big) + \frac{1}{2}g\|\widehat{h}\|^2,& E_2 &=  g\widehat{h}^{\top} \widehat{B},\\
        H(\widehat{U}) &= H_1 + H_2, & H_1 &= \frac{1}{2} \Big(\widehat{u}^\top \mathcal{P}(\widehat{q^x})\widehat{u} + \widehat{v}^\top \mathcal{P}(\widehat{q^x})\widehat{v} \Big) + g (\widehat{q^x})^\top\widehat{h}, & H_2 &= g (\widehat{q^x})^\top\widehat{B},\\
        K(\widehat{U}) &= K_1 + K_2, & K_1 &= \frac{1}{2}\Big(\widehat{v}^\top \mathcal{P}(\widehat{q^y})\widehat{v} + \widehat{u}^\top \mathcal{P}(\widehat{q^y})\widehat{u} \Big) + g(\widehat{q^y})^\top\widehat{h}& K_2 &= g(\widehat{q^y})^\top\widehat{B}.
\end{align}
The computation in Lemma \ref{lemma_convexity} gives,
\begin{align}\label{derivative_E1}
    \frac{\partial E_1}{\partial \widehat{U}} &= \Big( -\frac{1}{2}\widehat{u}^\top \mathcal{P}(\widehat{u})-\frac{1}{2}\widehat{v}^\top \mathcal{P}(\widehat{v}) + g\widehat{h}^\top, \widehat{u}^\top, \widehat{v}^\top \Big), & \frac{\partial E_2}{\partial \widehat{U}} &= \big( g\widehat{B}^\top, 0,  0 \big),
\end{align}
which implies
\begin{equation}
    -\frac{\partial E_1}{\partial \widehat{U} } \widehat{S} =g(\widehat{q^x})^\top \widehat{B}_x + g(\widehat{q^y})^\top \widehat{B}_y.
\end{equation}
The equations above, the flux Jacobians in \eqref{SG_Jacobians}, and the source term in \eqref{exact_source_term}, indicate
\begin{align}
       & -\frac{\partial E_1}{\partial \widehat{U}}\widehat{S} +   \frac{\partial E_2}{\partial \widehat{U}}\Big( \frac{\partial \widehat{F}}{\partial \widehat{U}} \frac{\partial \widehat{U}}{\partial x} + \frac{\partial \widehat{G}}{\partial \widehat{U}} \frac{\partial \widehat{U}}{\partial y}  - \widehat{S} \Big)\\
      = &   g(\widehat{q^x})^\top \widehat{B}_x + g(\widehat{q^y})^\top \widehat{B}_y + g\widehat{B}^\top \big( \frac{\partial\widehat{q^x}}{\partial x} + \frac{\partial\widehat{q^y}}{\partial y} \big)\\
      = & \big( g(\widehat{q^x})^\top \widehat{B}_x  + g\widehat{B}^\top \frac{\partial \widehat{q^x}}{\partial x} \big) + \big( g(\widehat{q^y})^\top \widehat{B}_y  + g\widehat{B}^\top \frac{\partial \widehat{q^y}}{\partial y} \big)\\
      = & \frac{\partial H_2}{\partial x} + \frac{\partial K_2}{\partial y}.
\end{align}
Therefore, the compatibility condition \eqref{compatibility_condition} is equivalent to the following condition,
\begin{equation}
        \frac{\partial E_1}{\partial \widehat{U}}\Big( \frac{\partial \widehat{F}}{\partial \widehat{U}} \frac{\partial \widehat{U}}{\partial x} + \frac{\partial \widehat{G}}{\partial \widehat{U}} \frac{\partial \widehat{U}}{\partial y}  \Big) = \frac{\partial H_1}{\partial x} + \frac{\partial K_1}{\partial y} = \frac{\partial H_1}{\partial \widehat{U}}\frac{\partial \widehat{U}}{\partial x} + \frac{\partial K_1}{\partial \widehat{U}}\frac{\partial \widehat{U}}{\partial y}.
\end{equation}
It suffices to show
\begin{align}
      \frac{\partial E_1}{\partial \widehat{U}}   \frac{\partial \widehat{F}}{\partial \widehat{U}} &= \frac{\partial H_1}{\partial \widehat{U}}, &  \frac{\partial E_1}{\partial \widehat{U}}   \frac{\partial \widehat{G}}{\partial \widehat{U}} &= \frac{\partial K_1}{\partial \widehat{U}},
\end{align}
which can be computed directly from \eqref{derivative_E1}, \eqref{SG_Jacobians}, \eqref{Lemma4_eq1}, and \eqref{gradient_velocity}. 


\section{Proofs for \cref{Sec4_2}}\label{app:ec-proofs}
In this section we provide proofs for \cref{lemma_LTE_EC,lemma_wellbalanced,lemma_sufficientcond_EC} in \cref{Sec4_2}, which are the crucial ingredients to proving that the choice of numerical fluxes \eqref{EC_flux} yields a second-order, well-balanced, EC scheme.

\subsection{Proof of \cref{lemma_LTE_EC}}\label{sec:LTE_EC}
    Assume $(\widehat{U},\widehat{B})$ are spatially smooth functions and $(\bm U_i, \bm B_i)$ are the exact cell averages. It suffices to compare the numerical fluxes $\mathcal{F}_{i+\frac{1}{2},j}, \mathcal{G}_{i,j+\frac{1}{2}}$ and the numerical source term $\bm S_{i,j}$ to the exact fluxes $\widehat{F}(\widehat{U})$ at $(x_{i+\frac{1}{2}},y_j)$, and $\widehat{G}(\widehat{U})$ at $(x_i,y_{j+\frac{1}{2}})$, and the exact source function $\widehat{S}(\widehat{U})$ at $(x_{i},y_j)$, respectively. We can rewrite $\widehat{F}(\widehat{U})$ and $ \widehat{G}(\widehat{U})$ in \eqref{exact_flux_term} as
\begin{equation}\label{exact_flux_rewrite}
    \begin{split}
        \widehat{F}(\widehat{U}) \overset{\eqref{velocity_coeff}}{=} & \begin{pmatrix}
            \mathcal{P}(\widehat{h})\widehat{u}\\
            \mathcal{P}(\widehat{q^x})\widehat{u} +  \frac{1}{2}g \mathcal{P}(\widehat{h})\widehat{h}\\
            \mathcal{P}(\widehat{q^x})\widehat{v} 
        \end{pmatrix} \overset{\eqref{commutative_prop},\eqref{velocity_coeff}}{=} 
        \begin{pmatrix}
            \mathcal{P}(\widehat{h})\widehat{u}\\
            \mathcal{P}(\widehat{u})\mathcal{P}(\widehat{h})\widehat{u}+  \frac{1}{2}g \mathcal{P}(\widehat{h})\widehat{h}\\
            \mathcal{P}(\widehat{v})\mathcal{P}(\widehat{h})\widehat{u}
        \end{pmatrix},\\
         \widehat{G}(\widehat{U}) \overset{\eqref{velocity_coeff}}{=} & \begin{pmatrix}
            \mathcal{P}(\widehat{h})\widehat{v}\\
            \mathcal{P}(\widehat{q^y})\widehat{u} \\
            \mathcal{P}(\widehat{q^y})\widehat{v} +  \frac{1}{2}g \mathcal{P}(\widehat{h})\widehat{h}
        \end{pmatrix} \overset{\eqref{commutative_prop},\eqref{velocity_coeff}}{=} 
        \begin{pmatrix}
            \mathcal{P}(\widehat{h})\widehat{v}\\
            \mathcal{P}(\widehat{u})\mathcal{P}(\widehat{h})\widehat{v}\\
            \mathcal{P}(\widehat{v})\mathcal{P}(\widehat{h})\widehat{v}+  \frac{1}{2}g \mathcal{P}(\widehat{h})\widehat{h}
        \end{pmatrix}.
    \end{split}
\end{equation}
Notice the following quantitative approximations in space:
\begin{subequations}
\begin{align}\label{quantitative_approximations}
  \overline{\bm U}_{i+\frac{1}{2},j} &=   \widehat{U}(x_{i+\frac{1}{2}},y_j) + \mathcal{O}(\Delta x^2),& \overline{\bm U}_{i,j+\frac{1}{2}} &= \widehat{U}(x_{i},y_{j+\frac{1}{2}}) + \mathcal{O}(\Delta y^2),\\\label{eq:taylor-ubar}
       \overline{\bm u}_{i+\frac{1}{2},j} &=   \widehat{u}(x_{i+\frac{1}{2}},y_j) + \mathcal{O}(\Delta x^2),& \overline{\bm u}_{i,j+\frac{1}{2}} &= \widehat{u}(x_{i},y_{j+\frac{1}{2}}) + \mathcal{O}(\Delta y^2),\\
        \overline{\bm v}_{i+\frac{1}{2},j} &=   \widehat{v}(x_{i+\frac{1}{2}},y_j) + \mathcal{O}(\Delta x^2),& \overline{\bm v}_{i,j+\frac{1}{2}} &= \widehat{v}(x_{i},y_{j+\frac{1}{2}}) + \mathcal{O}(\Delta y^2),\\
        \llbracket \bm U \rrbracket_{i+\frac{1}{2},j} &=  \Delta x \widehat{U}_x(x_{i+\frac{1}{2}},y_j) + \mathcal{O}(\Delta x^2), &   \llbracket \bm U \rrbracket_{i,j+\frac{1}{2}} &=  \Delta y \widehat{U}_y(x_{i},y_{j+\frac{1}{2}}) + \mathcal{O}(\Delta y^2),\\     \label{eq:taylor-Phbar}
        \mathcal{P}(\overline{\bm h}_{i+\frac{1}{2},j} ) &= \mathcal{P}( \widehat{h}(x_{i+\frac{1}{2}},y_j)) + \mathcal{O}(\Delta x^2), & \mathcal{P}(\overline{\bm h}_{i,j+\frac{1}{2}} ) &= \mathcal{P}( \widehat{h}(x_{i},y_{j+\frac{1}{2}})) + \mathcal{O}(\Delta y^2),\\
        \mathcal{P}(\overline{\bm u}_{i+\frac{1}{2},j} ) &= \mathcal{P}( \widehat{u}(x_{i+\frac{1}{2}},y_j)) + \mathcal{O}(\Delta x^2), &\mathcal{P}(\overline{\bm u}_{i,j+\frac{1}{2}} )& = \mathcal{P}( \widehat{u}(x_{i},y_{j+\frac{1}{2}})) + \mathcal{O}(\Delta y^2),\\
        \mathcal{P}(\overline{\bm v}_{i+\frac{1}{2},j} ) &= \mathcal{P}( \widehat{v}(x_{i+\frac{1}{2}},y_j)) + \mathcal{O}(\Delta x^2), &\mathcal{P}(\overline{\bm v}_{i,j+\frac{1}{2}} )& = \mathcal{P}( \widehat{v}(x_{i},y_{j+\frac{1}{2}})) + \mathcal{O}(\Delta y^2).
\end{align}
\end{subequations}
With the quantitative approximations above, it is straightforward to show that the numerical fluxes \eqref{EC_flux} are second-order approximations of the exact fluxes \eqref{exact_flux_rewrite} respectively, i.e.,
\begin{align}
    \mathcal{F}_{i+\frac{1}{2},j}^{EC} &= \widehat{F}(\widehat{U})|_{x_{i+\frac{1}{2}},y_j} + \mathcal{O}(\Delta x^2),& \mathcal{G}_{i,j+\frac{1}{2}}^{EC} &= \widehat{G}(\widehat{U})|_{x_{i},y_{j+\frac{1}{2}}} + \mathcal{O}(\Delta y^2).
\end{align}

Moreover, to show that the scheme \eqref{FV-SGSWE} has a second-order spatial local truncation error, it suffices to show that $\frac{\mathcal{F}_{i+\frac{1}{2},j}^{EC} - \mathcal{F}_{i-\frac{1}{2},j}^{EC}}{\Delta x}$ is a second-order approximation of $\widehat{F}(\widehat{U})_x$ at $(x_i,y_j)$, and $\frac{\mathcal{G}_{i,j+\frac{1}{2}}^{EC} - \mathcal{G}_{i,{j-\frac{1}{2}}}^{EC}}{\Delta y}$ is a second-order approximation of $\widehat{G}(\widehat{U})_y$ at $(x_i,y_j)$, and the numerical source term $\bm S_{i,j}$ is a second-order approximation of $\widehat{S}(\widehat{U})$ at $(x_i,y_j)$. This can be shown directly by applying the Taylor's expansion to the numerical fluxes at $(x_i,y_j)$. For example, consider the first $K$-block term corresponding to $\widehat{h}$ in $\frac{\mathcal{F}_{i+\frac{1}{2},j}^{EC} - \mathcal{F}_{i-\frac{1}{2},j}^{EC}}{\Delta x}$, assuming $\bm U_{i,j} = \widehat{U}(x_i,y_j)$,
\begin{equation}\label{LTE_estimate}
  \frac{(\mathcal{F}_{i+\frac{1}{2},j}^{EC})^h - (\mathcal{F}_{i-\frac{1}{2},j}^{EC})^h}{\Delta x}  =   \frac{\mathcal{P}(\overline{\bm h}_{i+\frac{1}{2},j})\overline{\bm u}_{i+\frac{1}{2},j} - \mathcal{P}(\overline{\bm h}_{i-\frac{1}{2},j})\overline{\bm u}_{i-\frac{1}{2},j}}{\Delta x}  \stackrel{\eqref{eq:taylor-ubar},\eqref{eq:taylor-Phbar}}{=}  \big((\mathcal{P}(\widehat{h})\widehat{u})_{x}\big)_{i,j}+ \mathcal{O}(\Delta x^2 ),
\end{equation}
where $\big((\mathcal{P}(\widehat{h})\widehat{u})_{x}\big)_{i,j}$ is the partial derivative with respect to $x$ of the exact flux at $(x_i,y_j)$. More detailed computations are as follows for one of the terms: We compute the local truncation error of $\frac{(\mathcal{F}_{i+\frac{1}{2},j}^{EC})^h - (\mathcal{F}_{i-\frac{1}{2},j}^{EC})^h}{\Delta x}$ to the exact flux $\big(\widehat{F}^h(\widehat{U})\big)_x  = \big(\mathcal{P}(\widehat{h})\widehat{u}\big)_x$. First, by \eqref{EC_flux},
\begin{equation}\label{appendix_eq1}
   \begin{split}
       &  \frac{(\mathcal{F}_{i+\frac{1}{2},j}^{EC})^h - (\mathcal{F}_{i-\frac{1}{2},j}^{EC})^h}{\Delta x} 
      =   \frac{\mathcal{P}(\overline{\bm h}_{i+\frac{1}{2},j})\overline{\bm u}_{i+\frac{1}{2},j} - \mathcal{P}(\overline{\bm h}_{i-\frac{1}{2},j})\overline{\bm u}_{i-\frac{1}{2},j}}{\Delta x} \\
      = & \frac{1}{4\Delta x}\Bigg( \Big(\mathcal{P}(\widehat{h}_{i,j}) + \mathcal{P}(\widehat{h}_{i+1,j}) \Big)(\widehat{u}_{i,j}+\widehat{u}_{i+1,j}) - \Big(\mathcal{P}(\widehat{h}_{i,j}) + \mathcal{P}(\widehat{h}_{i-1,j}) \Big)(\widehat{u}_{i,j}+\widehat{u}_{i-1,j}) \Bigg).
   \end{split}
\end{equation}
Then, we apply the Taylor's expansion of the crossing product terms in the second equality, i.e., $\mathcal{P}(\widehat{h}_{i,j})\widehat{u}_{i+1,j}$, $ \mathcal{P}(\widehat{h}_{i+1,j})\widehat{u}_{i,j}$, $ \mathcal{P}(\widehat{h}_{i,j})\widehat{u}_{i-1,j}$ and $\mathcal{P}(\widehat{h}_{i-1,j})\widehat{u}_{i,j}$ at $(x_i,y_j)$. This results in
\begin{equation}
    \begin{split}\label{appendix_eq2}
        \eqref{appendix_eq1} = & \frac{1}{4\Delta x} \Bigg( \mathcal{P}(\widehat{h}_{i,j})\widehat{u}_{i,j} + \mathcal{P}(\widehat{h}_{i,j}) \Big( \widehat{u}_{i,j}+ \Delta x (\widehat{u}_x)_{i,j} + \frac{\Delta x^2}{2}(\widehat{u}_{xx})_{i,j} ) +  \frac{\Delta x^3}{6}(\widehat{u}_{xxx})_{i,j} )  + \mathcal{O}(\Delta x^4) \Big) \\
      & + \widehat{u}_{i,j}\Big(\mathcal{P}(\widehat{h}_{i,j})+ \Delta x (\mathcal{P}(\widehat{h}_x))_{i,j} + \frac{\Delta x^2}{2}(\mathcal{P}(\widehat{h}_{xx}))_{i,j} )+ \frac{\Delta x^3}{6}(\mathcal{P}(\widehat{h}_{xxx}))_{i,j} ) +\mathcal{O}(\Delta x^4)    \Big) +  (\mathcal{P}(\widehat{h})\widehat{u})_{i+1,j} \Bigg) \\
      & - \frac{1}{4\Delta x} \Bigg( \mathcal{P}(\widehat{h}_{i,j})\widehat{u}_{i,j} + \mathcal{P}(\widehat{h}_{i,j}) \Big( \widehat{u}_{i,j}- \Delta x (\widehat{u}_x)_{i,j} + \frac{\Delta x^2}{2}(\widehat{u}_{xx})_{i,j} ) - \frac{\Delta x^3}{6}(\widehat{u}_{xxx})_{i,j} )+ \mathcal{O}(\Delta x^4) \Big) \\
      & + \widehat{u}_{i,j}\Big(\mathcal{P}(\widehat{h}_{i,j})- \Delta x (\mathcal{P}(\widehat{h}_x))_{i,j} + \frac{\Delta x^2}{2}(\mathcal{P}(\widehat{h}_{xx}))_{i,j} ) - \frac{\Delta x^3}{6}(\mathcal{P}(\widehat{h}_{xxx}))_{i,j} )+ \mathcal{O}(\Delta x^4)    \Big) +  (\mathcal{P}(\widehat{h})\widehat{u})_{i-1,j} \Bigg).
    \end{split}
\end{equation}
After combining the terms in the same order, we get
\begin{equation}\label{appendix_eq3}
\begin{split}
    \eqref{appendix_eq2} =& \frac{1}{4\Delta x} \Big(  2\Delta x \mathcal{P}(\widehat{h}_{i,j})(\widehat{u}_x)_{i,j} + 2\Delta x \mathcal{P}(\widehat{h}_x)_{i,j}(\widehat{u})_{i,j} + \frac{\Delta x^3}{3}(\widehat{u}_{xxx})_{i,j} \Big) \\
    &+ \frac{1}{4\Delta x} \Big(  (\mathcal{P}(\widehat{h})\widehat{u})_{i+1,j} -  (\mathcal{P}(\widehat{h})\widehat{u})_{i-1,j} \Big)+ \mathcal{O}(\Delta x^3 ). 
\end{split}
\end{equation}
Finally, we again apply the Taylor's expansion to $(\mathcal{P}(\widehat{h})\widehat{u})_{i+1,j} -  (\mathcal{P}(\widehat{h})\widehat{u})_{i-1,j}$ to obtain a second-order approximation of $\big(\mathcal{P}(\widehat{h})\widehat{u}\big)_x$ at $(x_i,y_j)$
\begin{equation}
    \begin{split}
        \eqref{appendix_eq3} = & \frac{1}{2}((\mathcal{P}(\widehat{h})\widehat{u})_{x})_{i,j} + \frac{\Delta x^2}{12}(\widehat{u}_{xxx})_{i,j}+ \frac{1}{4\Delta x} \Big( 2\Delta x ((\mathcal{P}(\widehat{h})\widehat{u})_{x})_{i,j} + \frac{\Delta x^3}{3}\big((\mathcal{P}(\widehat{h})\widehat{u})_{xxx}\big)_{i,j} \Big) + \mathcal{O}(\Delta x^3 )\\
        = & ((\mathcal{P}(\widehat{h})\widehat{u})_{x})_{i,j}+ \frac{\Delta x^2}{12} \Big( (\widehat{u}_{xxx})_{i,j}+ \big((\mathcal{P}(\widehat{h})\widehat{u})_{xxx}\big)_{i,j}  \Big)+ \mathcal{O}(\Delta x^3 ).
    \end{split}
\end{equation}
Similarly, other terms in the numerical fluxes can be evaluated by the Taylor's expansion to verify the second-order local truncation error of the energy conservative scheme.

  In this way, one can directly compute the local truncation error to show that the scheme $\eqref{FV-SGSWE}$, with the numerical fluxes and source term \eqref{EC_flux}, is second-order spatially accurate.

\subsection{Proof of \cref{lemma_wellbalanced}}\label{sec:wellbalanced}
Suppose we are given the initial data
    \begin{align}\label{well_balanced_IC}
        \bm u_{i,j} = \bm v_{i,j} &\equiv \bm 0, & \bm h_{i,j} +\bm B_{i,j} &= \text{const vector}, &\forall i,j.
    \end{align}
The time-independent bottom topography leads to $\frac{d}{dt}\bm B_{i,j} = 0, \forall i,j$. To prove the well-balanced property, it suffices to show 
\begin{align}\label{sufficient_well_balanced}
    \frac{d}{dt} \bm h_{i,j} &\equiv 0, & \frac{d}{dt}\bm q^x_{i,j} &\equiv 0, &  \frac{d}{dt}\bm q^y_{i,j} &\equiv 0 ,& \forall i,j.
\end{align}
By the straightforward substitution and the fact that the discretization of the velocities are both zero vectors, due to \eqref{sufficient_well_balanced},
\begin{equation}
\begin{split}
     \frac{d}{dt}\bm h_{i,j} =& -\frac{1}{\Delta x}\left( \mathcal{P}(\overline{\bm h}_{i+\frac{1}{2},j})\overline{\bm u}_{i+\frac{1}{2},j} -  \mathcal{P}(\overline{\bm h}_{i-\frac{1}{2},j})\overline{\bm u}_{i-\frac{1}{2},j} \right)\\
     & - \frac{1}{\Delta y}\left( \mathcal{P}(\overline{\bm h}_{i,j+\frac{1}{2}})\overline{\bm v}_{i,j+\frac{1}{2}} -  \mathcal{P}(\overline{\bm h}_{i,j-\frac{1}{2}})\overline{\bm v}_{i,j-\frac{1}{2}} \right)\\
     =& \bm 0.
\end{split}
\end{equation}
Before investigating the time derivative of the discharge, we first introduce useful identities:
\begin{equation}\label{identity1}
\begin{split}
     \left( \overline{\mathcal{P}(\bm h) \bm h}  \right)_{i+\frac{1}{2},j} -  \left( \overline{\mathcal{P}(\bm h) \bm h}  \right)_{i-\frac{1}{2},j} & \overset{\eqref{average_jump}}{=} \frac{1}{2}\left( \llbracket \mathcal{P}(\bm h)\bm h\rrbracket_{i+\frac{1}{2},j} + \llbracket \mathcal{P}(\bm h)\bm h\rrbracket_{i-\frac{1}{2},j} \right)  \\
     & \overset{\eqref{identity0}}{=} \mathcal{P}(\overline{\bm h}_{i+\frac{1}{2},j})\llbracket \bm h\rrbracket_{i+\frac{1}{2},j} + \mathcal{P}(\overline{\bm h}_{i-\frac{1}{2},j})\llbracket \bm h\rrbracket_{i-\frac{1}{2},j},\\
 \left( \overline{\mathcal{P}(\bm h) \bm h}  \right)_{i,j+\frac{1}{2}} -  \left( \overline{\mathcal{P}(\bm h) \bm h}  \right)_{i,j-\frac{1}{2}} & \overset{\eqref{average_jump}}{=} \frac{1}{2}\left( \llbracket \mathcal{P}(\bm h)\bm h\rrbracket_{i,j+\frac{1}{2}} + \llbracket \mathcal{P}(\bm h)\bm h\rrbracket_{i,j-\frac{1}{2}} \right)  \\
     & \overset{\eqref{identity0}}{=} \mathcal{P}(\overline{\bm h}_{i,j+\frac{1}{2}})\llbracket \bm h\rrbracket_{i,j+\frac{1}{2}} + \mathcal{P}(\overline{\bm h}_{i,j-\frac{1}{2}})\llbracket \bm h\rrbracket_{i,j-\frac{1}{2}}.
\end{split}
\end{equation}
Now by straightforward substitution, using identities \eqref{identity1} and the initial data, we obtain the following result:
\begin{equation}
    \begin{split}
        \frac{d}{dt}\bm q^x_{i,j} \overset{\eqref{EC_flux}}{=} & - \frac{g}{2\Delta x}\left( \left( \overline{\mathcal{P}(\bm h) \bm h}  \right)_{i+\frac{1}{2},j} -  \left( \overline{\mathcal{P}(\bm h) \bm h}  \right)_{i-\frac{1}{2},j}  \right)\\
        & - \frac{g}{2\Delta x}\left(  \mathcal{P}(\overline{\bm h}_{i+\frac{1}{2},j})\llbracket \bm B\rrbracket_{i+\frac{1}{2},j} + \mathcal{P}(\overline{\bm h}_{i-\frac{1}{2},j})\llbracket \bm B\rrbracket_{i-\frac{1}{2},j} \right)\\
        \overset{\eqref{identity1}}{=}&  - \frac{g}{2\Delta x}\left(  \mathcal{P}(\overline{\bm h}_{i+\frac{1}{2},j})\llbracket \bm h + \bm B\rrbracket_{i+\frac{1}{2},j} + \mathcal{P}(\overline{\bm h}_{i-\frac{1}{2},j})\llbracket \bm h +\bm B\rrbracket_{i-\frac{1}{2},j} \right) \overset{\eqref{well_balanced_IC}}{=} \bm 0,\\
         \frac{d}{dt}\bm q^y_{i,j} \overset{\eqref{EC_flux}}{=} & - \frac{g}{2\Delta y}\left( \left( \overline{\mathcal{P}(\bm h) \bm h}  \right)_{i,j+\frac{1}{2}} -  \left( \overline{\mathcal{P}(\bm h) \bm h}  \right)_{i,j-\frac{1}{2}}  \right)\\
        & - \frac{g}{2\Delta y}\left(  \mathcal{P}(\overline{\bm h}_{i,j+\frac{1}{2}})\llbracket \bm B\rrbracket_{i,j+\frac{1}{2}} + \mathcal{P}(\overline{\bm h}_{i,j-\frac{1}{2}})\llbracket \bm B\rrbracket_{i,j-\frac{1}{2}} \right)\\
        \overset{\eqref{identity1}}{=}&  - \frac{g}{2\Delta y}\left(  \mathcal{P}(\overline{\bm h}_{i,j+\frac{1}{2}})\llbracket \bm h + \bm B\rrbracket_{i,j+\frac{1}{2}} + \mathcal{P}(\overline{\bm h}_{i,j-\frac{1}{2}})\llbracket \bm h +\bm B\rrbracket_{i,j-\frac{1}{2}} \right) \overset{\eqref{well_balanced_IC}}{=} \bm 0,\\
    \end{split}
\end{equation}
which completes the proof.

\subsection{Proof of \cref{lemma_sufficientcond_EC}}\label{sec:sufficien_condition_EC}

  By multiplying both sides of \eqref{FV-SGSWE} by $\bm V_{i,j}^\top$ and using the definition $\bm V_{i,j} \coloneqq (\frac{\partial \bm E_{i,j}}{\partial \bm U_{i,j}})^\top$, we obtain 
  \begin{equation}\label{discrete_energy_dev}
      \frac{d}{dt}\bm E_{i,j} = -\frac{1}{\Delta x}\left( \bm V_{i,j}^\top \mathcal{F}_{i+\frac{1}{2},j} - \bm V_{i,j}^\top \mathcal{F}_{i-\frac{1}{2},j} \right) - \frac{1}{\Delta y} \left( \bm V_{i,j}^\top \mathcal{G}_{i,j+\frac{1}{2}} - \bm V_{i,j}^\top \mathcal{G}_{i,j-\frac{1}{2}} \right) + \bm V_{i,j}^\top \bm S_{i,j}.
  \end{equation}
We estimate these terms separately. The first term on the right-hand side can be expanded as follows:
\begin{equation}\label{VF_plus}
    \begin{split}
        \bm V_{i,j}^\top \mathcal{F}_{i+\frac{1}{2},j} \overset{\eqref{identity_ave_jump}}{=}   &\overline{\bm V }_{i+\frac{1}{2},j}^\top \mathcal{F}_{i+\frac{1}{2},j} - \frac{1}{2}\llbracket \bm V \rrbracket_{i+\frac{1}{2},j}^\top \mathcal{F}_{i+\frac{1}{2},j}\\
           \overset{\eqref{sufficient_condition_EC}\eqref{energy_flux}}{ = } & \mathcal{H}_{i+\frac{1}{2},j} + \overline{\bm \Psi}_{i+\frac{1}{2},j} + \frac{g}{4}\llbracket \bm B \rrbracket_{i+\frac{1}{2},j}^\top\mathcal{P}(\overline{h}_{i+\frac{1}{2},j})\llbracket \bm u \rrbracket_{i+\frac{1}{2},j} \\
           & -\frac{1}{2}\llbracket \bm \Psi \rrbracket_{i+\frac{1}{2},j} - \frac{g}{2}\llbracket\bm B \rrbracket_{i+\frac{1}{2},j}^\top \mathcal{P}(\overline{\bm h}_{i+\frac{1}{2},j})\overline{\bm u}_{i+\frac{1}{2},j} \\
           \overset{\eqref{identity_ave_jump}}{=} & \mathcal{H}_{i+\frac{1}{2},j} + \bm \Psi_{i,j} - \frac{g}{2}\llbracket\bm B \rrbracket_{i+\frac{1}{2},j}^\top \mathcal{P}(\overline{\bm h}_{i+\frac{1}{2},j}){\bm u}_{i,j}.
    \end{split}
\end{equation}
Using an analogous computation, we can also obtain the second term,
\begin{equation}\label{VF_minus}
    \bm V_{i,j}^\top \mathcal{F}_{i-\frac{1}{2},j} =  \mathcal{H}_{i-\frac{1}{2},j} + \bm \Psi_{i,j} + \frac{g}{2}\llbracket\bm B \rrbracket_{i-\frac{1}{2},j}^\top \mathcal{P}(\overline{\bm h}_{i-\frac{1}{2},j}){\bm u}_{i,j}.
\end{equation}
Additionally, in a similar manner, and with the corresponding quantities in \eqref{sufficient_condition_EC} and \eqref{energy_flux}, we derive
\begin{equation}\label{VG}
    \begin{split}
        \bm V_{i,j}^\top \mathcal{G}_{i,j+\frac{1}{2}} & = \mathcal{K}_{i,j+\frac{1}{2}} + \bm \Phi_{i,j} - \frac{g}{2}\llbracket \bm B \rrbracket_{i,j+\frac{1}{2}}^\top \mathcal{P}(\overline{\bm h}_{i,j+\frac{1}{2}})\bm v_{i,j},\\
        \bm V_{i,j}^\top \mathcal{G}_{i,j-\frac{1}{2}} & = \mathcal{K}_{i,j-\frac{1}{2}} + \bm \Phi_{i,j} + \frac{g}{2}\llbracket \bm B \rrbracket_{i,j-\frac{1}{2}}^\top \mathcal{P}(\overline{\bm h}_{i,j-\frac{1}{2}})\bm v_{i,j}.\\
    \end{split}
\end{equation}
Finally, a direct computation shows
\begin{equation}\label{VS}
    \begin{split}
        \bm V_{i,j}^\top \bm S_{i,j} = & -\frac{g  \bm u_{i,j}^\top }{2\Delta x} \left( \mathcal{P}(\overline{\bm h}_{i+\frac{1}{2},j}) \llbracket \bm B \rrbracket_{i+\frac{1}{2},j} + \mathcal{P}(\overline{\bm h}_{i-\frac{1}{2},j}) \llbracket \bm B \rrbracket_{i-\frac{1}{2},j}\right) \\
         & -\frac{g  \bm v_{i,j}^\top }{2\Delta y} \left( \mathcal{P}(\overline{\bm h}_{i,j+\frac{1}{2}}) \llbracket \bm B \rrbracket_{i,j+\frac{1}{2}} + \mathcal{P}(\overline{\bm h}_{i,j-\frac{1}{2}}) \llbracket \bm B \rrbracket_{i,j-\frac{1}{2}}\right)
    \end{split}
\end{equation}
The formula \eqref{discrete_energy_dev}, together with the combination of expressions \eqref{VF_plus}, \eqref{VF_minus}, \eqref{VG}, and \eqref{VS}, shows that the scheme \eqref{FV-SGSWE} is energy conservative, with energy fluxes defined in \eqref{energy_flux}.

\end{document}